\theoremstyle{plain}
\newtheorem{theorem}{Theorem}[section]
\newtheorem{lemma}[theorem]{Lemma}
\newtheorem{corollary}[theorem]{Corollary}
\newtheorem{example}[theorem]{Example}
\newtheorem{proposition}[theorem]{Proposition}
\theoremstyle{definition}
\newtheorem{remark}[theorem]{Remark}
\newtheorem{definition}[theorem]{Definition}
\newcommand{\ord}{\mathrm{ord}}
\newcommand{\Fix}{\mathrm{Fix}}
\newcommand{\Isom}{\mathrm{Isom}}
\newcommand{\lcm}{\mathrm{lcm}}
\newcommand{\N}{\mathbb{N}} 
\newcommand{\Z}{\mathbb{Z}}
\newcommand{\R}{\mathbb{R}}
\newcommand{\phee}{\varphi}
\newcommand{\til}[1]{\widetilde{#1}}
\newcommand{\spans}[1]{\langle {#1} \rangle}
\newcommand{\leer}{\varnothing}
\newcommand{\mc}{\mathcal}
\newcommand{\cc}{\mathrm{cc}_{2}} 
\newcommand{\ccm}{\mathrm{cc}_{m}} 
\newcommand{\ccp}{\mathrm{cc}_{p}} 
\newcommand{\cck}{\mathrm{cc}_{k}} 
\newcommand{\ccl}{\mathrm{cc}_{l}} 
\newcommand{\tA}{\mathtt{A}}
\newcommand{\tB}{\mathtt{B}}
\newcommand{\tC}{\mathtt{C}}
\newcommand{\tD}{\mathtt{D}}
\newcommand{\tE}{\mathtt{E}}
\newcommand{\tF}{\mathtt{F}_{4}}
\newcommand{\tG}{\mathtt{G}_{2}}
\newcommand{\tH}{\mathtt{H}}
\newcommand{\tI}{\mathtt{I}_{2}}
\newcommand{\atA}{\tilde{\mathtt{A}}} 
\newcommand{\atX}{\widetilde{\mathtt{X}}}
\newcommand{\atB}{\widetilde{\mathtt{B}}}
\newcommand{\atC}{\widetilde{\mathtt{C}}}
\newcommand{\atD}{\widetilde{\mathtt{D}}}
\newcommand{\atE}{\widetilde{\mathtt{E}}}
\newcommand{\atF}{\widetilde{\mathtt{F}}_{4}}
\newcommand{\atG}{\widetilde{\mathtt{G}}_{2}}
\newcommand{\atI}{\widetilde{\mathtt{I}}_{1}}
\newcommand{\lk}{\mathrm{Lk}}
\title{Involutions in Coxeter groups}
\author[A. Michael]{Anna Michael}
\address{
Otto von Guericke University Magdeburg \\ 
Faculty of Mathematics, Universit\"atsplatz 1, 39106 Magdeburg, Germany}
\email{anna.michael@ovgu.de}
\author[Y. Santos]{Yuri Santos Rego}
\address{
University of Lincoln\\ 
Charlotte Scott Research Centre for Algebra, School of Mathematics and Physics\\ 
Brayford Pool, LN6 7TS Lincoln, UK}
\email{ysantosrego@lincoln.ac.uk}
\author[P. Schwer]{Petra Schwer}
\address{
Ruprecht Karls University Heidelberg \\
Institute of Mathematics, Mathematikon, Neuenheimer Feld 205, 69120 Heidelberg, Germany}
\email{schwer@mathi.uni-heidelberg.de}
\author[O. Varghese]{Olga Varghese}
\address{
Heinrich Heine University D\"usseldorf\\
Institute of Mathematics,  Universit\"atsstra{\upshape{\ss}}e 1, 40225, D\"usseldorf, Germany}
\email{olga.varghese@hhu.de}
\date{\today}
\begin{document}

\begin{abstract}
	We combinatorially characterize the number $\mathrm{cc}_2$ of conjugacy classes of involutions in any Coxeter group in terms of  higher rank odd graphs. This notion naturally generalizes the concept of odd graphs, used previously to count the number of conjugacy classes of reflections. We provide uniform bounds and discuss some extremal cases, where the number $\mathrm{cc}_2$ is smallest or largest possible. Moreover, we provide formulae for $\mathrm{cc}_2$ in free and direct products as well as for some finite and affine types, besides computing $\mathrm{cc}_2$ for all triangle groups, and all affine irreducible Coxeter groups of rank up to eleven.
\end{abstract}

\maketitle

\begin{center}
    We dedicate this work to Peter Littelmann on the occasion of his retirement.
\end{center}

\section{Introduction}

As abstract analogs  of reflection groups, every Coxeter group $W$ is generated by a standard generating set $S$ of involutions, that is, elements of order two.  Under Tits' geometric representation the generators in $S$ are mapped to abstract reflections along faces of a cone in $\R^{|S|}$, 
and the set of all such reflections in $W$ contains precisely the $W$-conjugates of the standard generators. So every reflection is involutory.  However, not all involutions are reflections. And not all involutions (nor reflections) are in a same conjugacy class. A given group $W$ may admit more than one generating set $S$ with respect to which it is a Coxeter group. While the set of reflections does depend on the chosen Coxeter system $(W,S)$, the set of involutions does not. 

The following questions are hence natural to ask: 
What can one say about the potential discrepancy between the set of reflections and that of involutions in a given Coxeter group? When are these two sets equal? Is there a combinatorial characterization of conjugacy classes of involutions in terms of the Coxeter diagram of a given system? 

One source of inspiration is the following: The number of conjugacy classes of reflections relates to the number of connected components of the odd graph $\Gamma_{odd}$,  which is obtained from a Coxeter diagram $\Gamma$ by removing all edges with even or $\infty$ labels \cite[Lemma~3.6]{BMMN}. 
We generalize this concept and determine the number of conjugacy classes of involutions in \Cref{thm:CardConjClass} by counting connected components of certain higher rank odd graphs $\Gamma^k$, $1\leq k\leq \vert V(\Gamma)\vert$,  derived from $\Gamma$. The first graph $\Gamma^1$ equals the aforementioned odd graph. For details and definitions see \Cref{sec:OddGraphs}.

While the set of involutions is infinite in case of infinite Coxeter groups, there are always only finitely many  conjugacy classes of involutions. This follows for one from the fact that Coxeter groups are CAT(0) and hence have only finitely many conjugacy classes of finite subgroups \cite{Davis2008}, \cite{BridsonHaefliger1999}. In providing an explicit formula for the number $\mathrm{cc}_2(W_\Gamma)$ of conjugacy classes of involutions in terms of combinatorial statistics based on the defining diagram, \Cref{thm:CardConjClass} gives a direct proof of this fact.

\begin{theorem}[Counting conjugacy classes of involutions]
    \label{thm:CardConjClass}
    Let $\Gamma$ be a Coxeter diagram and, for each $k \geq 1$, write $\Gamma^k$ for its corresponding $k$-odd graph (cf. \cref{def:odd-graphs}).
    Then, the cardinality of the set of connected components of $\Gamma^k$ is equal to the cardinality of the set of conjugacy classes of involutions of rank $k$ in the corresponding Coxeter group $W_\Gamma$. 
    The total number $\mathrm{cc}_2(W_\Gamma)$ of conjugacy classes of involutions in $W_\Gamma$ is equal to 
    \[
    \mathrm{cc}_2(W_\Gamma)=\sum^{\mid V(\Gamma)\mid}_{k=1}\vert\pi_0(\Gamma^k)\vert.
    \]
\end{theorem}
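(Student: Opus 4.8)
The plan is to prove the per-rank statement first and then read off the total count. \textbf{Reduction to a fixed rank.} I would start by recording that an involution $w \in W_\Gamma$ carries a well-defined \emph{rank}, namely the dimension of the $(-1)$-eigenspace of $w$ under Tits' geometric representation on $\R^{\vert V(\Gamma)\vert}$; equivalently, this is the reflection length of $w$, i.e. the minimal number of reflections with product $w$ (which for an involution may be taken pairwise commuting), and it also equals the codimension of $\Fix(w)$. Since conjugation preserves this dimension and sends involutions to involutions, the rank is a conjugacy invariant taking values in $\{1,\dots,\vert V(\Gamma)\vert\}$. Hence the set of conjugacy classes of involutions decomposes as a disjoint union, over $k$ in this range, of the classes of rank $k$. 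Granting the first assertion of \cref{thm:CardConjClass}, that $\vert\pi_0(\Gamma^k)\vert$ counts the rank-$k$ classes, the displayed formula for $\mathrm{cc}_2(W_\Gamma)$ follows at once by summing, so the entire weight of the theorem rests on establishing the per-rank bijection.

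\textbf{A surjection from the vertex set.} The first ingredient of that bijection is the classical structural fact (going back to Richardson) that every rank-$k$ involution is conjugate to the longest element $w_0(W_I)$ of a standard parabolic subgroup $W_I$ with $\vert I\vert = k$, $W_I$ finite, and $-1 \in W_I$; for such $I$ the element $w_0(W_I)$ acts as $-\mathrm{id}$ on the $k$-dimensional span of $I$ and therefore has rank exactly $k$. These subsets are precisely the vertices of $\Gamma^k$ (cf. \cref{def:odd-graphs}), so the assignment $I \mapsto [w_0(W_I)]$ is a well-defined \emph{surjection} from $V(\Gamma^k)$ onto the set of rank-$k$ conjugacy classes. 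For $k=1$ this is just the statement that every reflection is conjugate to a standard generator.

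\textbf{Edges encode conjugacy.} It then remains to show that this surjection factors through a bijection $\pi_0(\Gamma^k) \to \{\text{rank-}k\text{ classes}\}$; that is, two vertices $I,J$ lie in the same connected component of $\Gamma^k$ if and only if $w_0(W_I)$ and $w_0(W_J)$ are conjugate in $W_\Gamma$. The ``only if'' direction is the routine half: each edge of $\Gamma^k$ is built from an elementary ``odd'' configuration, and one verifies by a direct computation inside the relevant dihedral (rank-two) parabolic that the associated element conjugates $w_0(W_I)$ to $w_0(W_J)$, exactly as an odd edge conjugates one generator to another when $k=1$. This shows the map is constant on components and hence descends to $\pi_0(\Gamma^k)$.

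\textbf{The main obstacle.} The hard direction is the converse: if $w_0(W_I)$ and $w_0(W_J)$ are conjugate by some arbitrary $g \in W_\Gamma$, then $I$ and $J$ must be joined by a path of edges in $\Gamma^k$. One cannot use a general conjugator $g$ directly; instead I would decompose it into elementary moves, controlling it via the structure of the normalizer of a spherical parabolic subgroup (Howlett) together with the deletion/exchange condition, thereby reducing an arbitrary conjugacy to a finite sequence of single odd steps between rank-$k$ spherical $(-1)$-subsets. This is the genuine generalization of the rank-one argument that conjugate reflections are linked by an odd path, and it is where essentially all the difficulty lies. Once it is in place, the earlier steps assemble into the per-rank bijection, and summing over $k$ as in the reduction yields the stated formula for $\mathrm{cc}_2(W_\Gamma)$.
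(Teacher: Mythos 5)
Your overall architecture mirrors the paper's: reduce to counting rank-$k$ classes, represent every class by the central longest element of a spherical standard parabolic (Richardson/Springer; the paper's \cref{lem:LongestEltsRepresentInvolutions}), verify that edges of $\Gamma^k$ produce conjugations by a dihedral computation (the easy half, the paper's \cref{prop:CliquesConjugate}), and then face the converse. But the converse is not proved in your proposal --- it is only announced as ``the main obstacle'' with a one-sentence plan --- and it is precisely the paper's main technical result (\cref{thm:CliquesConjugateProd}), whose proof occupies all of \cref{sec:parabolic-technical}. Two concrete ingredients are missing from your plan. First, before any normalizer/Deodhar machinery can be invoked, you must pass from conjugacy of the \emph{elements}, $w_0(W_I) = g\, w_0(W_J)\, g^{-1}$, to conjugacy of the \emph{subgroups} $W_I$ and $W_J$: Deodhar-type statements concern conjugate parabolic subgroups, not conjugate elements. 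The paper does this via parabolic closures (\cref{lem:ConjLongest}: since $w_0(W_J)$ is central in $W_J$ and a reduced expression for it uses every letter of $S_J$, one has $\mathrm{Pc}(w_0(W_J)) = W_J$, and parabolic closures are equivariant under conjugation). Your sketch never addresses this step.

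Second --- and this is where the real content lies --- once $W_I$ and $W_J$ are known to be conjugate, the combinatorial Deodhar--Krammer theorem (\cref{thm:Krammer-Deodhar}) only yields a chain of \emph{adjacent} spherical standard parabolics with conjugators inside the finite groups generated by consecutive pairs; it does not say that these adjacency steps are odd-adjacencies in the sense of \cref{def:odd-adjacent}. A priori a single step could exchange a whole irreducible component of type, say, $\tB_2$ or $\tH_3$ for an isomorphic but different copy, which would create conjugacies invisible to the edges of $\Gamma^k$ and destroy injectivity of your map $\pi_0(\Gamma^k) \to \{\text{rank-}k\text{ classes}\}$. Ruling this out is exactly what the paper's \cref{lem:onevertex}, \cref{IrreducibleAdjacent} and \cref{ProductsZ2} accomplish: adjacency between two copies of any type from \Cref{fig:Tabelle} other than $\tA_1$ forces the parabolic generated by their union to be infinite, so in any Deodhar chain only $\tA_1$-factors can move, they can only move along odd edges, and the complementary factor $W_X$ (the part with no $\Z/2\Z$ direct factor) is literally identical on both sides. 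Without this rigidity analysis, your ``decompose $g$ into elementary moves'' is a restatement of what must be shown, not an argument. (Two minor points: Howlett's normalizer theorem is for \emph{finite} Coxeter groups, so for general $W_\Gamma$ you need Deodhar or Krammer; and your definition of rank via the $(-1)$-eigenspace differs from the paper's definition via the parabolic closure --- the two agree, but that equivalence itself requires the representation-by-central-longest-elements result, so it cannot be taken as the starting point for free.)
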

 
As a by-product of the proof of \cref{thm:CardConjClass} (see also \cref{lem:LongestEltsRepresentInvolutions}) one has the following. 

\begin{corollary}[Representing conjugacy classes of involutions]
    \label{cor:conjClassRefinement}
    Let notation be as in \Cref{thm:CardConjClass}. 
    The connected components of $\Gamma^k$ are in natural bijection with the conjugacy classes of involutions of rank $k$.     
    To obtain a representative for such a conjugacy class one may choose any vertex in the corresponding  connected component in $\Gamma^k$ and take the longest element in the parabolic subgroup corresponding to it. 
\end{corollary}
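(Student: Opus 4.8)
The plan is to exhibit the bijection promised in the first sentence and to read its values off as the longest-element representatives of the second sentence, both directly from the construction behind \Cref{thm:CardConjClass}. Recall (see \cref{def:odd-graphs}) that each vertex of $\Gamma^k$ records a rank-$k$ standard parabolic subgroup $W_J$ whose longest element $w_0(W_J)$ is an involution of rank $k$. I would define a map $\Phi$ from the vertex set of $\Gamma^k$ to the set of conjugacy classes of rank-$k$ involutions in $W_\Gamma$ by
\[
\Phi\colon J \longmapsto \big[\, w_0(W_J)\, \big].
\]
The two assertions of the corollary then amount to: (i) $\Phi$ is constant on each connected component, so it descends to a map $\overline{\Phi}\colon \pi_0(\Gamma^k)\to\{\text{conjugacy classes}\}$; and (ii) $\overline{\Phi}$ is a bijection. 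The recipe for a representative is then nothing but the definition of $\Phi$.

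First I would prove the descent (i). Two vertices $J, J'$ span an edge of $\Gamma^k$ precisely when they differ by one of the elementary ``odd'' moves used to build the graph, and the key point is that each such move realizes a genuine conjugacy $w_0(W_J) \sim w_0(W_{J'})$ in $W_\Gamma$. This is exactly the step established in the proof of \Cref{thm:CardConjClass}, which I would invoke. Since conjugacy is transitive, $\Phi$ is then constant along edge-paths, hence on connected components, and $\overline{\Phi}$ is well defined.

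Next I would establish bijectivity (ii). Surjectivity is the content of \cref{lem:LongestEltsRepresentInvolutions}: every rank-$k$ involution of $W_\Gamma$ is conjugate to $w_0(W_J)$ for some rank-$k$ parabolic $W_J$ that appears as a vertex of $\Gamma^k$, so every class lies in the image of $\overline{\Phi}$. Both source and target are finite --- the target because a finitely generated Coxeter group has only finitely many conjugacy classes of involutions, as recalled in the introduction --- and by \Cref{thm:CardConjClass} they have the same cardinality. A surjection between finite sets of equal cardinality is automatically a bijection, so $\overline{\Phi}$ is one; in particular injectivity, namely that $w_0(W_J)\sim w_0(W_{J'})$ forces $J$ and $J'$ into the same component, comes for free.

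The main obstacle is concentrated in step (i): checking that each edge of $\Gamma^k$ truly encodes a conjugacy of longest elements rather than a mere combinatorial adjacency. This delicate input is precisely what the proof of \Cref{thm:CardConjClass} supplies (through the definition of the odd moves), so here it enters as a black box. Seeking a bijection proof independent of the cardinality count would instead demand a direct proof of injectivity --- that any conjugacy between two such central involutions decomposes into elementary odd moves --- which requires the full structural theory of conjugacy of involutions in Coxeter groups; routing the argument through the counting theorem is exactly what lets us avoid this.
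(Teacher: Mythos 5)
Your proof is correct, and it follows the paper's skeleton for two of the three steps: the paper also proves this corollary by sending a vertex $W_J$ of $\Gamma^k$ to the class of its longest element, getting descent along edges from \cref{thm:CliquesConjugateProd} (odd-adjacent parabolics are conjugate) together with \cref{lem:ConjLongest}, and surjectivity from \cref{lem:LongestEltsRepresentInvolutions}. Where you genuinely diverge is injectivity: the paper obtains it structurally and in one line, by running the same two results in the other direction --- if $w_0(W_J)$ and $w_0(W_{J'})$ are conjugate, then $W_J$ and $W_{J'}$ are conjugate parabolics by \cref{lem:ConjLongest} (since $\mathrm{Pc}(w_0(W_J))=W_J$), and then \cref{thm:CliquesConjugateProd} places them in the same component of $\Gamma^k$ --- whereas you sidestep this with a counting argument (finite surjection between equinumerous sets). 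Your shortcut is logically sound and not circular, since \cref{thm:CardConjClass} is proved before and independently of the corollary; what it costs you is that the argument becomes non-constructive exactly where the paper's is constructive, and it leans on a cardinality statement whose own proof already contains the two-sided bijection you are reconstructing, so nothing is actually saved. Two small citation glosses worth fixing: the claim that $w_0(W_J)$ has \emph{rank} $k$ does not follow from \cref{def:odd-graphs} alone but needs $\mathrm{Pc}(w_0(W_J))=W_J$, i.e.\ \cref{lem:ConjLongest}; and in the surjectivity step, placing the parabolic produced by \cref{lem:LongestEltsRepresentInvolutions} among the vertices of $\Gamma^k$ (rather than of some $\Gamma^{k'}$) uses the same fact, plus \cref{IrreducibleWithCenter} to see that its components are of the types in \Cref{fig:Tabelle}.
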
  

The proof of \Cref{thm:CardConjClass} can be found in \Cref{sec:mainproofs}. 
Our results thus provide an effective method to describe conjugacy classes of involutions in Coxeter groups. A different algorithm has long been described by Richardson \cite{Richardson_1982}, building upon the works of Howlett \cite{Howlett_1980} and Deodhar \cite{Deodhar_1982}. Both our work and Richardson's use results from Deodhar's seminal paper \cite{Deodhar_1982} as key ingredients. From that point onwards, however, our proofs heavily differ --- Richardson's methods are representation-theoretic while ours are combinatorial in flavour.

We record some structural consequences. The next corollary, proved in \Cref{sec:mainproofs}, highlights two extremal cases, where all involutions are reflections or all involutions are conjugate to one another. The theorem thereafter provides bounds on the number of conjugacy classes and characterizes those Coxeter groups that attain the upper bound. 

\begin{corollary}[The extremal cases] 
\label{cor:unterSchranke}
    Let $\Gamma$ be a Coxeter diagram of rank $n$. 
    \begin{enumerate}
    \item There is only one conjugacy class of involutions in $W_\Gamma$, i.e. $\mathrm{cc}_2(W_\Gamma)=1$,  if and only if the Coxeter diagram $\Gamma$ is complete, the odd graph $\Gamma^1$ is connected, and every edge label in $\Gamma$ is odd or $\infty$.
    \item Every involution in $W_\Gamma$ is a reflection if and only if the diagram $\Gamma$ is complete and every edge label in $\Gamma$ is odd or $\infty$, in which case $\mathrm{cc}_2(W_\Gamma)=\vert\pi_0(\Gamma^1)\vert \leq n$. 
    \end{enumerate}
\end{corollary}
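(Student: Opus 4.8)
The plan is to derive both statements from \Cref{thm:CardConjClass} by analysing when the higher rank odd graphs $\Gamma^k$ are empty or connected. Recall that the reflections of $W_\Gamma$ are exactly its involutions of rank $1$: by \cite[Lemma~3.6]{BMMN} the conjugacy classes of reflections are the connected components of the odd graph $\Gamma^1$, each represented by a standard generator, i.e. by the longest element $w_0(\{s\})=s$ of a rank-$1$ parabolic (\Cref{cor:conjClassRefinement}). Consequently ``every involution is a reflection'' is equivalent to ``no involution has rank $\geq 2$'', which by \Cref{thm:CardConjClass} is equivalent to $\Gamma^k=\varnothing$ for all $k\geq 2$. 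Throughout I use that, by \cref{def:odd-graphs} and \Cref{cor:conjClassRefinement}, a vertex of $\Gamma^k$ is a subset $T\subseteq S$ with $\vert T\vert=k$, $W_T$ finite, and longest element $w_0(T)$ central in $W_T$ (equivalently, acting as $-\mathrm{id}$ on $\mathrm{span}(T)$); note also that $\Gamma^1$ has exactly $n=\vert V(\Gamma)\vert$ vertices.

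The combinatorial heart of the argument is the equivalence
\[
\Gamma^k=\varnothing \text{ for all } k\geq 2 \iff \Gamma \text{ is complete and every edge label of } \Gamma \text{ is odd or } \infty.
\]
For the forward implication I argue by contraposition and exhibit a vertex of $\Gamma^2$: if $\Gamma$ fails to be complete there are $s\neq t$ with $m(s,t)=2$, so $W_{\{s,t\}}\cong C_2\times C_2$ has central longest element $st$; and if $\Gamma$ carries an even finite label $m(s,t)=2\ell$, then $W_{\{s,t\}}$ is dihedral of order $4\ell$ with central longest element $(st)^\ell=-\mathrm{id}$. In either case $\{s,t\}$ is a vertex of $\Gamma^2$. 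For the reverse implication, assume $\Gamma$ is complete with all edge labels odd or $\infty$, and let $T\subseteq S$ with $\vert T\vert\geq 2$ and $W_T$ finite. Finiteness forbids $\infty$-labels, so the induced subdiagram on $T$ is complete with all labels odd and $\geq 3$. If $\vert T\vert=2$, then $W_T\cong I_2(m)$ with $m$ odd, whose longest element is a reflection and hence not central; if $\vert T\vert\geq 3$, any three elements of $T$ span a triangle group $(p,q,r)$ with odd $p,q,r\geq 3$, so $1/p+1/q+1/r\leq 1$ and $W_T$ would be infinite, a contradiction. Hence no subset of rank $\geq 2$ is a vertex of any $\Gamma^k$.

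With this equivalence in hand the two statements follow quickly. For \Cref{cor:unterSchranke}(2), the displayed equivalence together with the reduction of the first paragraph yields the characterization; moreover, when it holds, only $k=1$ contributes to the sum in \Cref{thm:CardConjClass}, so $\mathrm{cc}_2(W_\Gamma)=\vert\pi_0(\Gamma^1)\vert$, and since $\Gamma^1$ has exactly $n$ vertices this is at most $n$. For \Cref{cor:unterSchranke}(1), observe that for $n\geq 1$ the graph $\Gamma^1$ is non-empty, so $\vert\pi_0(\Gamma^1)\vert\geq 1$; hence $\mathrm{cc}_2(W_\Gamma)=\sum_{k\geq 1}\vert\pi_0(\Gamma^k)\vert=1$ forces $\vert\pi_0(\Gamma^1)\vert=1$ and $\pi_0(\Gamma^k)=\varnothing$ for all $k\geq 2$. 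The first condition says $\Gamma^1$ is connected, and by the displayed equivalence the second says $\Gamma$ is complete with all edge labels odd or $\infty$; conversely, these three conditions give $\mathrm{cc}_2(W_\Gamma)=\vert\pi_0(\Gamma^1)\vert=1$.

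The step I expect to require the most care is the reverse implication of the displayed equivalence, as it rests on the classification input that a complete Coxeter diagram of rank $\geq 3$ with all labels $\geq 3$ is never spherical (it contains a Euclidean or hyperbolic triangle subgroup) together with the dihedral computation identifying precisely when $w_0$ is central. Everything else is bookkeeping on top of \Cref{thm:CardConjClass} and \Cref{cor:conjClassRefinement}.
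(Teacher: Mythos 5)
Your proposal is correct and follows essentially the same route as the paper: both reduce the corollary to \Cref{thm:CardConjClass} (plus the fact that $\Gamma^1$ is always nonempty) together with the key equivalence that $V(\Gamma^k)=\varnothing$ for all $k\geq 2$ precisely when $\Gamma$ is complete with every label odd or $\infty$. The only difference is that the paper asserts this equivalence ``by definition'' (i.e.\ by inspection of the types in \Cref{fig:Tabelle}), whereas you justify it explicitly via the dihedral computation and the triangle-group criterion $1/p+1/q+1/r\leq 1$ --- a harmless elaboration of the same argument.
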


For a first example illustrating the results consider the Coxeter group of type $\atG$ with Coxeter diagram as shown in \Cref{fig:graphsG2}. The odd graph $\Gamma^1$, depicted in the same figure, has two connected components and so does $\Gamma^2$. The graph $\Gamma^3$ is empty and hence,  by \Cref{thm:CardConjClass}, one has  $\mathrm{cc}_2(W_{\atG}) =4$, the total number of connected components in the graphs $\Gamma^1$ and $\Gamma^2$. Moreover, the set of involutions in $\atG$ is strictly bigger than the set of reflections by item~(2) of \Cref{cor:unterSchranke}. 

\begin{figure}[h]
	\begin{center}
	\captionsetup{justification=centering}
		\begin{tikzpicture}[scale=1, transform shape]
            \draw[fill=black] (0, 0) circle (2pt);
            \draw[fill=black] (1, 0) circle (2pt);
            \draw[fill=black] (2, 0) circle (2pt);
            \draw (0,0)--(2, 0);
            \node at (1.5, 0.2) {$6$};
            \node at (0, -0.4) {$s_1$};
            \node at (1, -0.4) {$s_2$};
            \node at (2, -0.4) {$s_3$};
            \node at (1, -1.5) {$\atG$};

            \draw[fill=black] (4, 0) circle (2pt);
            \draw[fill=black] (5, 0) circle (2pt);
            \draw[fill=black] (6, 0) circle (2pt);
            \draw (4,0)--(5, 0);
            \node at (4, -0.5) {$W_{\{1\}}$};
            \node at (5.1, -0.5) {$W_{\{2\}}$};
            \node at (6.3, -0.5) {$W_{\{3\}}$};
            \node at (5, -1.5) {$\Gamma^1$};

            \draw[fill=black] (9, 0) circle (2pt);
            \draw[fill=black] (11, 0) circle (2pt);
            \node at (9, -0.5) {$W_{\{1,3\}}$};
            \node at (11, -0.5) {$W_{\{2,3\}}$};
            \node at (10, -1.5) {$\Gamma^2$};

            \node at (13,-0.25) {$\leer$};
            \node at (13,-1.5) {$\Gamma^3$};

    \end{tikzpicture}
	\caption{$\mathrm{cc}_2(W_{\atG})=\vert\pi_0(\Gamma^1)\vert+\vert\pi_0(\Gamma^2)\vert=4$.}
	 \label{fig:graphsG2}

 \end{center}
\end{figure}

From this example, and keeping in mind that the number of conjugacy classes of reflections is at most the rank $n$ of a Coxeter system, we broadly ask: in what situations is the discrepancy between the involutions and reflections as large (or low) as possible? That is, are there functions $g(n), f(n)$ for which $g(n) \leq \cc(W_\Gamma) \leq f(n)$ for $|V(\Gamma)|=n$? 

For instance, one can observe --- see \Cref{ex:finiteAbelian} --- that finite abelian Coxeter groups of rank $n$ have $2^n - 1$ conjugacy classes of nontrivial involutions, which is much larger than $n$ if $n\geq 2$. It turns out that this is the maximal possible number for a finite Coxeter group of rank $n$. It is hence natural to wonder whether nonabelian groups of rank $n$ also attain this maximum and whether infinite Coxeter groups of rank $n$ can have more conjugacy classes of involutions than $2^n-1$. The answers --- yes and no, respectively --- are part of the content of \Cref{thm:obereSchranke}, which we prove in \Cref{sec:mainproofs}. In the working example of the group of type $\atG$, which has rank three, \Cref{thm:obereSchranke} implies that $\cc(W_{\atG})$ does not attain the upper bound possible with respect to its rank. 

Regarding lower bounds, we have seen in \Cref{cor:unterSchranke} that $\cc$ might well equal $1$, so that the bound $1 \leq \cc(W_\Gamma)$ is sharp. As this is not very informative, we construct from each higher rank odd graph $\Gamma^k$ a variant $\Omega^k$ with same vertex set that easily yields a more precise lower bound, which in many examples becomes an equality.

\begin{theorem}[Bounding the number of conjugacy classes of involutions]
\label{thm:obereSchranke}
Let $\Gamma$ be a Coxeter diagram of rank $n$ and denote by $W_\Gamma$ the associated Coxeter group. Write $\Omega^k$, with $1 \leq k \leq n$, for its $\mc{O}$-graphs (cf. \cref{def:omegas}) and suppose $W_{\Delta_1},\ldots, W_{\Delta_m}$ are its maximal (with respect to inclusion) spherical standard parabolic subgroups. Then the following inequalities hold. 
    \begin{equation} \label{eq:bound1}
    |\pi_0(\Omega^1)| + \ldots + |\pi_0(\Omega^n)| \leq \cc(W_\Gamma)\leq \cc(W_{\Delta_1})+\ldots +\cc(W_{\Delta_m}), \quad \text{ and}
    \end{equation} 
\begin{equation} \label{eq:bound2}
\begin{aligned}
1 \leq \mathrm{cc}_2(W_\Gamma) \leq 
\begin{cases} 
2^n-1 & \text{ if } W_\Gamma \text{ is finite}, \\
2^n-2 & \text{ otherwise.} 
\end{cases}
\end{aligned}
\end{equation}
The numerical bounds in Inequality~\eqref{eq:bound2} are sharp and are attained also by nonabelian Coxeter groups. Moreover, if an infinite Coxeter group $W_\Gamma$ of rank $n\geq 3$ attains the numerical upper bound in~\eqref{eq:bound2}, then $n=3$ and 
$W_\Gamma$ is isomorphic to a triangle group of the form $\Delta(2p,2q,2r)$ with $p,q,r \in \N$.  
\end{theorem}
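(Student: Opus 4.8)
The plan is to treat, in order, the two inequalities in~\eqref{eq:bound1}, the numerical bounds in~\eqref{eq:bound2} together with their sharpness, and finally the rigidity statement for infinite groups. Throughout I use \Cref{thm:CardConjClass} in the form $\cc(W_\Gamma)=\sum_{k=1}^{n}\lvert\pi_0(\Gamma^k)\rvert$, together with the description of vertices underlying \Cref{cor:conjClassRefinement}: a $k$-element subset $J\subseteq S$ is a vertex of $\Gamma^k$ exactly when $W_J$ is spherical with longest element acting as $-1$. In particular $\lvert V(\Gamma^k)\rvert\le\binom{n}{k}$ and $\lvert\pi_0(\Gamma^k)\rvert\le\lvert V(\Gamma^k)\rvert$. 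For the lower bound in~\eqref{eq:bound1}, I read off from \cref{def:omegas} that $\Omega^k$ has the same vertices as $\Gamma^k$ and that the partition of these vertices into $\Omega^k$-components is coarser than the one induced by $\Gamma^k$; hence $\lvert\pi_0(\Omega^k)\rvert\le\lvert\pi_0(\Gamma^k)\rvert$ for each $k$, and summing gives the claim. For the upper bound I use \Cref{cor:conjClassRefinement}: any conjugacy class of involutions of $W_\Gamma$ has a representative $w_J$ that is the longest element of a spherical standard parabolic $W_J$; choosing a maximal spherical $W_{\Delta_i}\supseteq W_J$, this $w_J$ is an involution of $W_{\Delta_i}$. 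Since $W_{\Delta_i}$-conjugacy implies $W_\Gamma$-conjugacy, sending a $W_{\Delta_i}$-class to its $W_\Gamma$-class yields a surjection from $\bigsqcup_{i=1}^{m}\{\,\text{involution classes of }W_{\Delta_i}\,\}$ onto the involution classes of $W_\Gamma$, whence $\cc(W_\Gamma)\le\sum_{i=1}^{m}\cc(W_{\Delta_i})$.

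The bounds in~\eqref{eq:bound2} follow by counting. Summing $\lvert\pi_0(\Gamma^k)\rvert\le\binom{n}{k}$ over $k$ gives $\cc(W_\Gamma)\le 2^n-1$; when $W_\Gamma$ is infinite the only $n$-subset $S$ has $W_S=W_\Gamma$ non-spherical, so $\Gamma^n$ is empty and the top summand vanishes, leaving $\cc(W_\Gamma)\le\sum_{k=1}^{n-1}\binom{n}{k}=2^n-2$. The bound $1\le\cc(W_\Gamma)$ is immediate. Sharpness of $2^n-1$ is witnessed by the finite abelian groups of \Cref{ex:finiteAbelian} and, in the nonabelian case, by $W=I_2(2m)\times A_1^{\,n-2}$ with $m\ge 2$: all labels are even and finite, every standard parabolic is spherical with longest element $-1$, and each $\Gamma^k$ is edgeless, so $\lvert\pi_0(\Gamma^k)\rvert=\binom{n}{k}$ and $\cc(W)=2^n-1$. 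For the infinite bound, a rank-$3$ triangle group such as $\Delta(4,4,4)$ realizes $2^3-2=6$.

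The rigidity statement is the heart of the argument. Suppose $W_\Gamma$ is infinite of rank $n\ge 3$ and $\cc(W_\Gamma)=2^n-2$. As $\Gamma^n$ is empty, equality forces, for every $k\le n-1$, both $\lvert V(\Gamma^k)\rvert=\binom{n}{k}$ (so every proper standard parabolic is spherical with longest element $-1$) and $\lvert\pi_0(\Gamma^k)\rvert=\lvert V(\Gamma^k)\rvert$ (so each such $\Gamma^k$ is edgeless); the case $k=2$ shows that every label $m_{ij}$ is even and finite. The decisive observation is that no irreducible finite Coxeter group of rank $\ge 3$ has all labels even, since the diagrams of $A_m,B_m,D_m,E_m,F_4,H_3,H_4$ each carry a label $3$ or $5$. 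Hence for $n\ge 4$ every rank-$3$ standard parabolic, being proper, spherical and all-even, is reducible; equivalently any three generators span at most one edge of the Coxeter diagram, which forces the diagram to have maximal vertex degree at most $1$. Then $W_\Gamma$ is a direct product of finite dihedral groups $I_2(2m)$ and copies of $A_1$, hence finite, contradicting our hypothesis. Therefore $n=3$. When $n=3$ the same observation shows $W_\Gamma$ cannot be reducible (a reducible rank-$3$ group with even finite labels is again such a finite product), so it is an irreducible triangle group with all labels even and finite, that is, of the form $\Delta(2p,2q,2r)$, and it is infinite by assumption.

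The step I expect to demand the most care is the sharpness claim for the infinite bound: verifying that a concrete $\Delta(2p,2q,2r)$ actually attains $6$ reduces to checking that its graph $\Gamma^2$ is edgeless, which depends on the precise edge rule of \cref{def:odd-graphs}. The guiding principle—and the reason the all-even hypothesis is exactly the right one—is that an edge of a higher odd graph can only arise in the presence of an odd label, which the even-label condition excludes; once this is confirmed, the forward direction above is purely combinatorial, relying only on the vertex count and the classification of finite irreducible diagrams by their labels.
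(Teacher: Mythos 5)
Your proof is correct, and for both inequalities and the sharpness claims it runs essentially parallel to the paper's: the same counting $|\pi_0(\Gamma^k)|\leq|V(\Gamma^k)|\leq\binom{n}{k}$, the same vanishing of the $k=n$ summand for infinite groups, and the same use of \cref{lem:LongestEltsRepresentInvolutions} together with maximal spherical parabolics for the upper bound in \eqref{eq:bound1}; your witnesses $\tI(2m)\times(\Z/2\Z)^{n-2}$ and $\Delta(4,4,4)$ differ only cosmetically from the paper's (disjoint unions of dihedral diagrams, and $\atC_2\cong\Delta(2,4,4)$). One small point: your assertion that the $\Omega^k$-partition is coarser than the $\Gamma^k$-partition deserves its one-line justification, namely that odd-adjacency forces $W_J=W_L\times W_X\cong(\Z/2\Z)^\ell\times W_X\cong W_M\times W_X=W_K$ by items (1)--(2) of \cref{def:odd-adjacent}, so $E(\Gamma^k)\subseteq E(\Omega^k)$; this is exactly what the paper checks. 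Where you genuinely diverge is the rigidity step ruling out $n>3$, and your route is shorter than the paper's. Both proofs first extract from equality that $\Gamma$ is even and $2$-spherical with every proper standard parabolic a vertex of its odd graph; but the paper then argues that some rank-$3$ parabolic of type $\tA_1\sqcup\tI(2r)$ must exist (else $W_\Gamma$ is elementary abelian), shows that no path in $\Gamma$ can join the $\tA_1$ to the $\tI(2r)$ because a connecting path would produce an even irreducible triangle parabolic $\Delta(2p,2q,2r)$ that cannot be spherical, and concludes that $\Gamma$ is disconnected with both parts spherical, making $W_\Gamma$ finite --- a contradiction. You bypass all of this with the single classification fact that no irreducible finite Coxeter group of rank $\geq 3$ is all-even, so every rank-$3$ standard parabolic is reducible, hence any three vertices of $\Gamma$ span at most one edge, hence $\Gamma$ has maximum vertex degree $\leq 1$ and $W_\Gamma$ is a finite product of dihedral groups and copies of $\Z/2\Z$ --- the same contradiction, reached by a purely local, diagram-theoretic argument. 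The trade-off is that the paper's longer path/triangle analysis makes the obstruction explicit (a forced non-spherical triangle parabolic), which motivates its closing identification of the equality cases, whereas your version is leaner; both then settle $n=3$ identically, since evenness plus $2$-sphericity of a rank-$3$ diagram is literally the statement that $W_\Gamma\cong\Delta(2p,2q,2r)$ with $p,q,r\in\N$.
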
 

Our main technical result, which goes into the proof of \cref{thm:CardConjClass}, is given below. It provides an explicit criterion for spherical standard parabolic subgroups with nontrivial centers to be conjugate, using odd-adjacency relations (cf. \cref{sec:OddGraphs}) and their Coxeter diagrams. 
A crucial ingredient for the proof is a combinatorial version by Krammer, stated as \Cref{thm:Krammer-Deodhar}, of a theorem essentially due to Deodhar, to be found in \cite[Section 3]{Krammer_2009}. Krammer based his formulation of the statement on Deodhar's result in the case of irreducible subgroups in \cite[Proposition 5.5]{Deodhar_1982}. 
In the finite case this has been proved earlier by Howlett~ \cite{Howlett_1980}. 
Our result reads as follows:

\begin{theorem}[Conjugating parabolics with nontrivial centers] 
    \label{thm:CliquesConjugateProd}
    Let $\Gamma=(S,E)$ be a Coxeter diagram with $S = \lbrace s_i \vert \  i \in I \rbrace$ and write $W=W_\Gamma$ for the associated Coxeter group. For $J, K \subseteq I$ let $W_J$ and $W_K$ be the corresponding standard parabolic subgroups, and assume they are spherical with longest elements being central elements. Then the subgroup $W_J$ is conjugate to $W_K$ if and only if both following conditions hold: 
    \begin{enumerate}
        \item there exist $\ell\in\mathbb{N}$ and $L, M ,X \subseteq I$ such that $W_J= W_L\times W_X$ and $W_K=W_M\times W_X$ with $W_L\cong W_M\cong (\mathbb{Z}/2\mathbb{Z})^\ell$, and  
        \item there exists a finite sequence of standard parabolic  subgroups $W_1, \ldots, W_m$ such that $W_1=W_L$, $W_m=W_M$, and $W_i$ is odd-adjacent to $W_{i+1}$ for $i=1,\ldots, m-1$. 
    \end{enumerate}
\end{theorem}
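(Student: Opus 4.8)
The plan is to establish the two implications separately, taking the combinatorial conjugacy criterion for spherical standard parabolics in \Cref{thm:Krammer-Deodhar} as the main engine and exploiting the centrality of the longest elements to isolate the rigid common factor $W_X$. For sufficiency I argue directly: given a decomposition as in~(1) together with an odd-adjacency chain $W_L = W_1, \ldots, W_m = W_M$ as in~(2) --- which, by the way the higher odd graphs are built, proceeds through parabolics orthogonal to $X$ --- each step replaces one generator $s$ of $W_i$ by a generator $t$ with $m_{st}$ odd, both orthogonal to the remaining generators and to $X$. The dihedral group $\langle s,t\rangle$ of order $2m_{st}$ then contains an element $g_i$ with $g_i s g_i^{-1} = t$; as $s$ and $t$ commute with the unchanged generators and with $W_X$, this $g_i$ conjugates $W_i$ to $W_{i+1}$ and centralizes $W_X$. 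Hence $g := g_{m-1}\cdots g_1$ satisfies $g W_L g^{-1} = W_M$ and centralizes $W_X$, so $g W_J g^{-1} = g(W_L\times W_X)g^{-1} = W_M\times W_X = W_K$.

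For necessity, assume $W_J$ and $W_K$ are conjugate. By \Cref{thm:Krammer-Deodhar} there is a sequence of elementary moves carrying $J$ to $K$, each conjugating the current subset by the longest element $w_0(Q)$ of a spherical parabolic $W_Q$; on reflections this realizes the opposition automorphism $-w_0(Q)$ of the diagram of $Q$. I first record that a spherical parabolic has central longest element exactly when each of its irreducible components has longest element $-1$, that is, is of type $A_1$, $B_n$, $D_n$ ($n$ even), $E_7$, $E_8$, $F_4$, $H_3$, $H_4$, or $I_2(m)$ ($m$ even). In particular every irreducible component of $W_J$ of rank at least two is of one of these \emph{central types}.

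The crux, and what I expect to be the main obstacle, is a \textbf{rigidity} statement: the opposition automorphism $-w_0(Q)$ of any spherical irreducible $Q$ fixes, as a standard sub-diagram, every central-type sub-parabolic of rank at least two. When $w_0(Q)=-1$ this automorphism is trivial, so only the irreducibles with noncentral longest element --- $A_n$ ($n\geq 2$), $D_n$ ($n$ odd), $E_6$ and $I_2(m)$ ($m$ odd) --- must be inspected, and for these I would run through their finitely many central-type rank-$\geq 2$ sub-diagrams and check each is stabilized: any $D_{2k}\subseteq D_n$ ($n$ odd) must contain both tips of the unique fork and is thus fixed by the fork-swap, the $D_4\subseteq E_6$ is preserved by the order-two flip, while $A_n$ and $I_2(m)$ with $m$ odd contain no central-type sub-diagram of rank $\geq 2$ at all. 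It follows that no central-type component of rank $\geq 2$ is ever moved by an elementary move.

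Granting rigidity, every elementary move fixes each rank-$\geq 2$ component of the current parabolic setwise, so these components are unchanged along the whole chain; consequently $J$ and $K$ share exactly the same rank-$\geq 2$ standard sub-parabolics, whose union I take to be $X$. Centrality of $w_0(J)$ forces every generator of $J$ outside $X$ to be an isolated $A_1$-component orthogonal to $X$, giving $W_J = W_{J\setminus X}\times W_X$ and $W_K = W_{K\setminus X}\times W_X$ with $W_{J\setminus X}\cong W_{K\setminus X}\cong(\Z/2\Z)^\ell$, where $\ell = |J|-|X| = |K|-|X|$ (the ranks agreeing as $W_J\cong W_K$); this is~(1) with $L = J\setminus X$, $M = K\setminus X$. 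The elementary moves now act solely on these mutually orthogonal $A_1$-generators, each step being an odd swap of a single one while the others and all of $X$ remain fixed and orthogonal, and reading off the successive configurations produces the odd-adjacency chain of~(2). The residual points to check --- that the Krammer--Deodhar moves can be taken in this single-generator form and never absorb an $A_1$ into the rigid part --- both follow from the preservation of irreducible-component types under conjugation.
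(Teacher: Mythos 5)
Your ``if'' direction has a genuine gap, and it sits exactly in the parenthetical claim that the chain in~(2), ``by the way the higher odd graphs are built, proceeds through parabolics orthogonal to $X$''. Condition~(2) as stated is a condition on the abelian factors alone: the intermediate parabolics $W_i$ only have to be odd-adjacent to their neighbours, and nothing forces their generators to commute with $S_X$. Without that orthogonality your dihedral conjugators $g_i$ need not centralize $W_X$, so the product $g$ conjugates $W_L$ to $W_M$ but not $W_J$ to $W_K$. Moreover this cannot be patched, because the implication ``(1) and (2) imply conjugacy'' is false as literally stated. Take $\Gamma$ with vertices $a,b,c,x$, unlabeled edges $\{a,b\}$ and $\{b,c\}$, an edge $\{b,x\}$ labeled $4$, and no other edges. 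Then $W_J=\langle a,x\rangle$ and $W_K=\langle c,x\rangle$ satisfy~(1) (with $L=\{a\}$, $M=\{c\}$, $X=\{x\}$, $\ell=1$) and~(2) (the chain $\langle a\rangle,\langle b\rangle,\langle c\rangle$), yet they are not conjugate: since adjacency preserves the isomorphism type of subdiagrams, any sequence as in \Cref{thm:Krammer-Deodhar} could only pass through the three full subdiagrams $\{a,c\},\{a,x\},\{c,x\}$ of type $\tA_1\sqcup\tA_1$, and every pairwise union of these induces the abelian group $W_{\{a,c,x\}}\cong(\Z/2\Z)^3$, in which no element conjugates one rank-two subgroup onto a different one. (Equivalently, in Deodhar's language, the only admissible moves from $\{a,x\}$ use $\{a,b,x\}$ of type $\tB_3$ or the abelian $\{a,c,x\}$, both with central longest elements, so $\{a,x\}$ is equivalent only to itself; the involutions $ax$ and $cx$ are not conjugate even though $a$ and $c$ are.)

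For what it is worth, you are in good company: the paper's own proof of this direction makes the same leap, deducing $W_L\sim W_M$ from \Cref{prop:CliquesConjugate} and asserting that $W_J\sim W_K$ ``follows immediately''. What is actually needed, and what is used later in the proof of \Cref{thm:CardConjClass}, is the version in which the chain consists of odd-adjacent parabolics all commuting with the common factor $W_X$, i.e.\ an edge path between $W_J$ and $W_K$ themselves in $\Gamma^k$; with that amendment your sufficiency argument is exactly right. Your ``only if'' direction, by contrast, is essentially sound and takes a genuinely different route from the paper's: the paper reduces via \Cref{Products} and \Cref{ProductsZ2} (conjugation carries generating sets to generating sets, matching up irreducible components, the non-abelian central-type components being rigid by \Cref{IrreducibleAdjacent}) and then invokes \Cref{prop:CliquesConjugate}, whereas you analyse Deodhar's elementary moves through opposition automorphisms and a rigidity lemma, whose case-check is correct. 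The one assertion you leave unverified is that each single-generator swap is an \emph{odd} swap: this requires observing that the swapped generator is an $\tA_1$-component of the current subset, so its unique neighbour in $Q=J_i\cup\{s\}$ is the adjoined vertex $s$, which forces the irreducible component of $Q$ containing the swapped pair to be of type $\tA_2$ or $\tI(m)$ with $m$ odd --- a fillable detail.
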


As opposed to Richardson's concept of $W$-equivalence (see below), the conditions stated in our \cref{thm:CliquesConjugateProd} can be interpreted as a more hands-on, combinatorial description of conjugacy of involutions.

We also provide exact formulae on the number $\ccm$ of conjugacy classes of nontrivial elements of  order $m$ in  free and direct products of groups.

\begin{theorem}[Free and direct products]
\label{thm:FreeDirectProducts}
Let $G, H$ be groups and $m\geq 2$.
\begin{enumerate}
\item If $\ccm(G)$ and $\ccm(H)$ are finite, then the number $\ccm$ is additive on free products, that is,
\[
\ccm(G\ast H)=\ccm(G) + \ccm(H).\]

\item If for every $(k,l)\in\mathbb{N}\times\mathbb{N}$ with $\lcm(k,l)=m$, the values $\cck(G)$ and $\ccl(H)$ are finite, then the number of conjugacy classes of elements of order $m$ of a direct product $G\times H$ is also finite and given by

\[
\ccm(G\times H)=\underset{\lcm(k,l)=m}{\sum_{(k,l)\in\N\times\N}} \cck(G)\cdot \ccl(H).
\]
In particular, for a prime number $p$ we have
\[
\ccp(G\times H)=\ccp(G)+ \ccp(H)+\ccp(G)\cdot \ccp(H).
\]
\end{enumerate}
\end{theorem}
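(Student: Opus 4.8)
The plan is to prove the two parts independently, each by reducing to a clean description of how conjugacy classes and orders behave under the respective product; throughout I adopt the convention $\mathrm{cc}_1(G)=1$, counting the single class of the identity, which is precisely what makes the terms with $k=1$ or $l=1$ in part~(2) meaningful.

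For part~(1) the decisive input is the classical structure of torsion in free products. First I would recall that every element of finite order in $G\ast H$ is conjugate to an element contained in one of the free factors (finite cyclic subgroups are conjugate into factors, e.g.\ by the Kurosh subgroup theorem or by the action on the Bass--Serre tree). Second, I would invoke the conjugacy theorem for free products: a nontrivial element of $G$ is never conjugate in $G\ast H$ to an element of $H$, and two elements of the same factor are conjugate in $G\ast H$ if and only if they are already conjugate in that factor. Together these say that the canonical map from the disjoint union of the order-$m$ classes of $G$ and of $H$ into the order-$m$ classes of $G\ast H$ is a bijection: surjectivity is the first fact, and injectivity (no merging within a factor, no merging across factors) is the second. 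Counting cardinalities then gives $\ccm(G\ast H)=\ccm(G)+\ccm(H)$, with the left-hand side finite whenever both summands are.

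For part~(2) I would start from the elementary observation that conjugation in $G\times H$ acts coordinatewise, so $(g,h)$ and $(g',h')$ are conjugate exactly when $g$ is conjugate to $g'$ in $G$ and $h$ to $h'$ in $H$; hence the conjugacy classes of $G\times H$ are in bijection with pairs $(\mathcal{C},\mathcal{D})$ consisting of a class $\mathcal{C}$ of $G$ and a class $\mathcal{D}$ of $H$. Since $\ord(g,h)=\lcm(\ord g,\ord h)$ depends only on the orders of $g$ and $h$, and the order is a conjugacy invariant, the classes of order $m$ are precisely those pairs $(\mathcal{C},\mathcal{D})$ for which $\lcm(\ord\mathcal{C},\ord\mathcal{D})=m$. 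Sorting these pairs by the values $k=\ord\mathcal{C}$ and $l=\ord\mathcal{D}$, there are exactly $\cck(G)\cdot\ccl(H)$ of them for each admissible pair $(k,l)$, and summing over all $(k,l)$ with $\lcm(k,l)=m$ produces the asserted formula. Because $\lcm(k,l)=m$ forces $k\mid m$ and $l\mid m$, the index set is finite, so the hypothesised finiteness of the individual factors yields finiteness of $\ccm(G\times H)$. The displayed special case then follows at once: for a prime $p$ the only pairs with $\lcm(k,l)=p$ are $(1,p)$, $(p,1)$ and $(p,p)$, and inserting $\mathrm{cc}_1(G)=\mathrm{cc}_1(H)=1$ collapses the sum to $\ccp(G)+\ccp(H)+\ccp(G)\cdot\ccp(H)$.

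I expect the only genuine obstacle to lie in part~(1), namely in pinning down and correctly citing the conjugacy theorem for free products, since everything rests on the assertion that distinct factor-classes never fuse in $G\ast H$; once this is quoted in the form above the count is immediate. Part~(2) is essentially bookkeeping, and its only delicate points are purely formal: justifying the finiteness of the sum through the divisibility constraint $k,l\mid m$, and making sure that order-$m$ elements with a trivial coordinate, such as $(g,e)$ with $\ord g=m$, are accounted for by the $l=1$ term --- which is exactly where the convention $\mathrm{cc}_1=1$ is used.
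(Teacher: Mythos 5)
Your proposal is correct and follows essentially the same route as the paper: for part (1) the paper also conjugates torsion elements into the free factors via the action on the Bass--Serre tree (citing Serre) and rules out cross-factor fusion, and for part (2) it also uses coordinatewise conjugation together with $\ord(g,h)=\lcm(\ord g,\ord h)$ and the same bookkeeping over pairs $(k,l)$ with $\lcm(k,l)=m$. If anything, you make explicit two points the paper leaves implicit --- that two elements of the same free factor conjugate in $G\ast H$ are already conjugate in that factor, and the convention $\mathrm{cc}_1=1$ that makes the $(1,m)$ and $(m,1)$ terms of the sum come out right --- so there are no gaps.
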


We use these results to explicitly compute $\cc$ in multiple nontrivial cases. Most prominently, we give formulae for $\cc(W_\Gamma)$ in the spherical and affine families $\tA$, $\atA$, $\tB=\tC$ and $\atC$, besides explicitly computing $\cc(W_\Gamma)$ for all triangle groups, and all affine irreducible groups of ranks up to eleven. We also give an alternative, geometric way to compute $\cc$ for right-angled Coxeter groups that bypasses our tools. For all such results, see \Cref{sec:specialCases,sec:app:tables-affine}. 

\subsection*{Comments on the history of the problem}

We are not the first to describe conjugacy classes in Coxeter groups and to discuss (refinements of) the conjugacy problem. Progress has sometimes been made for certain classes of elements inside a Coxeter group or for certain classes of Coxeter groups. We would like to highlight the following. 

As pointed out, conjugacy classes of involutions were already studied by Deodhar \cite{Deodhar_1982}, Springer \cite{Springer82} and Richardson \cite{Richardson_1982} in the early 1980s.
Building upon Deodhar's work, Springer and Richardson showed independently that every involution in a Coxeter group is in fact conjugate to a unique involution $c_J$ contained in a finite parabolic subgroup satisfying a certain representation-theoretic condition. An immediate consequence of this is seen in \cite[Theorem~A(a)]{Richardson_1982} and \cref{lem:LongestEltsRepresentInvolutions} below in \cref{sec:preliminaries}. The proof of \cref{lem:LongestEltsRepresentInvolutions} bypasses Richardson's tools, making use of Springer's work \cite{Springer82} instead. 
Moreover, Richardson shows in that same paper that 
two such involutions $c_J$ and $c_L$ are conjugate if and only if $J$ and $L$ are $W$-equivalent. 
Subsets $J,L \subseteq I$ are \emph{$W$-equivalent} if there exists $w\in W_\Gamma$ such that $wS_Jw^{-1}=S_L$. 
While this result gives an explicit description of the conjugacy classes of involutions, it takes some time to work out when $S_I$ and $S_J$ 
fullfil the required representation-theoretic condition and 
are $W$-equivalent.  

Carter gave a complete classification  of conjugacy classes in (finite) Weyl groups in terms of graphs in his 1972 paper~\cite{Carter}, building on work of Frobenius \cite{Frobenius}, 
Schur \cite{Schur}, Young \cite{YoungIV, YoungV}, and Specht \cite{Specht}. 

Krammer \cite{Krammer_2009} was first to prove that the classical conjugacy problem is decidable for all Coxeter groups. This led to the question whether there exist minimal length representatives of every conjugacy class and a sequence of operations transforming any element into such a minimal representative in order to decide upon conjugacy -- much like it is possible to decide the word problem by means of braid relations and elementary cancellations of subwords of the form $s_is_i$. 
Marquis \cite{Marquis} provides such a description of conjugate elements in arbitrary Coxeter groups in terms of cyclic shifts. This question can already be found in Cohen's book~\cite{Cohen} and was discussed, for example, by He and Nie \cite{HeNie} in the affine case and by Geck and Pfeiffer \cite{GeckPfeiffer} in the finite case. 

More recently Mili\'cevi\'c, Schwer and Thomas  provide a closed, geometric formula for all conjugacy classes of elements in affine Coxeter groups \cite{MST4} and more generally for  arbitrary split subgroups of the full isometry group of Euclidean space \cite{MST5}. Their formula is in terms of the move-set and fix-set of the linear (i.e. spherical) part of a given element (when written as a product of a translation and an element in the corresponding spherical Coxeter group). 

\subsection*{Open problems and future work}

We would like to highlight some natural questions and open problems that remain unsolved in this paper. 

a) Our results, especially the formula in \Cref{thm:CardConjClass}, provide an algorithm to compute the number of conjugacy classes of involutions in a Coxeter group from a given Coxeter diagram. Investigate the complexity of this algorithm and compare it with the algorithm of Deodhar--Howlett described by Richardson \cite{Richardson_1982} and with algorithms provided by the solution to the conjugacy problem in \cite{Krammer_2009}.   

b) Recall from \cref{thm:obereSchranke} that the `universal' upper bound for $\cc(W_\Gamma)$ can only be attained by infinite groups of rank three. That is, the bound $\cc(W_\Gamma) \leq 2^n - 2$ is not optimal if $|V(\Gamma)| \geq 4$ and $W_\Gamma$ is infinite. In turn, lowering the bound down by one to $2^n-3$, one can construct reducible examples in rank four with $\cc(W_\Gamma)=13=2^4-3$; take, for instance, $\Gamma = \atC_2 \sqcup \tA_1$. This leaves open the following question: Does there exist a function $f: \N_{\geq 4} \to \N$ such that $\cc(W) \leq f(n)$ for all infinite irreducible Coxeter groups $W$ of rank $n \geq 4$ and $f(n) = \cc(W_0)$ for some infinite irreducible Coxeter group $W_0$ of rank at least four?

c) Coxeter groups may have torsion elements of any order $m$. Our paper, however, deals with involutions, i.e., the case $m=2$, except for \cref{thm:FreeDirectProducts}. Is it possible to define a similar family of graphs which allows to compute the number of conjugacy classes of elements of a given order $m$ by examining the combinatorial structure of the defining Coxeter diagram? 

d) It is well known that a Coxeter group may admit more than one Coxeter system. While the number of connected components of higher rank odd graphs $\Gamma^k$ depends a priori on the chosen Coxeter diagram for a given group $W$, the number $\cc(W)$ of conjugacy classes of involutions is, by definition, a group-theoretic invariant of $W$. Moreover, \cite[Proposition~2.2]{MoellerVarghese2023} implies that also $|\pi_0(\Gamma_{odd})|=|\pi_0(\Gamma^1)|$ does not depend on $\Gamma$. 
This is interesting in light of what is known about the isomophism problem for Coxeter groups \cite{CoxeterGalaxy}. 
For one, $\cc(W_\Gamma)$ can be used to distinguish nonisomorphic Coxeter groups. On the other hand it would be interesting to see how the $k$-odd graphs behave with respect to moves between Coxeter diagrams. 
Coxeter groups with different Coxeter diagrams are usually obtained by means of combinatorial moves on their diagrams, such as twists \cite{BernhardExample, BMMN} or blow-ups \cite{MuehlherrSurvey}. We broadly ask: how do the $k$-odd graphs of a Coxeter diagram $\Gamma$ change under known `moves' that do not alter the isomorphism type of the underlying Coxeter group?

\subsection*{Structure of the paper}

\Cref{sec:preliminaries} sets notation and collects some useful lemmata from the literature. In \Cref{sec:OddGraphs} we introduce higher rank odd graphs, naturally generalizing  odd graphs. In the same section we provide some examples and means to (partially) compute such graphs under specific conditions. \Cref{sec:ConjParabolics} is devoted to the main proofs, in which \Cref{sec:parabolic-technical} concerns itself with technical results about conjugation between parabolic subgroups with nontrivial centers. In \Cref{sec:mainproofs} we prove our main results, namely \Cref{thm:CliquesConjugateProd,thm:CardConjClass,thm:obereSchranke,thm:FreeDirectProducts}, along with corollaries and first examples computing $\cc$. Later in \Cref{sec:specialCases} we explicitly compute and discuss $\cc$ in many subclasses of Coxeter groups, mostly applying \Cref{thm:CardConjClass} and the techniques from \cref{sec:OddGraphs} to obtain our results. Specifically, triangle groups are dealt with in \Cref{sec:triangle}, RACGs are discussed (with an alternative, geometric argument) in \Cref{sec:RACS}, and the final \Cref{sec:finite-affine-Coxeter} concerns some spherical and affine Coxeter groups, with additional explicit values for $\cc$ summarized in the tables from \Cref{sec:app:tables-affine}. 

\subsection*{Acknowledgements} 
AM is fully, and YSR and PS were partially, supported by the DFG, German Research Foundation, grant no. 314838170, GRK 2297 \emph{MathCoRe}. OV is supported by DFG grant VA~1397/2-2. YSR and PS gratefully acknowledge the hospitality of the Fakult\"at f\"ur Mathematik of the OVGU Magdeburg for their stay there in March 2024. The authors thank Paula Macedo Lins de Araujo for helpful comments.

\section{Preliminaries}\label{sec:preliminaries}

This section is used to collect some necessary background and to fix notation. 

A group $W$ is a \emph{Coxeter group} (of finite rank) if it admits an explicit presentation of the following form: 
There exists a subset $S\subset W$ such that $S= \lbrace s_i \mid i\in I\rbrace$ for a finite index set $I = \lbrace 1, \dots, n\rbrace$ and  
\[
W\cong\langle S \mid (s_is_j)^{m_{i,j}} \ \forall i,j\in I\rangle,
\] 
where $m_{i,j}=1$ if $i=j$ and $m_{i,j}=m_{j,i}\in \N_{\geq 2} \cup\{\infty\}$ for $i\neq j$. 
We call the pair $(W,S)$ a \emph{Coxeter system}. The cardinality $n=|I|=|S|$ is called the \emph{rank} of the Coxeter group $W$ (or of the Coxeter system $(W,S)$). Note that the rank is not a group-theoretic invariant of $W$ as it depends on a choice of Coxeter system presenting $W$; see, for instance, \cite{CoxeterGalaxy}. 
One can show that the order of the product $s_is_j$ of two standard generators is exactly $m_{i,j}$ and hence all generators are involutions \cite{Bourbaki_2002}. 
The set of all \emph{reflections} in $W$ with respect to $S$ is then given by the set of conjugates $\{wsw^{-1} \mid w\in W, s\in S\}$. Coxeter systems are usually encoded in certain graphs (or diagrams), detailed below. 

Throughout this text, graphs $\Lambda = (V,E)$ are simplicial and given with a specified vertex set $V=V(\Lambda)$ and edge set $E=E(\Lambda)$. For a vertex subset $X \subseteq V(\Lambda)$, recall that the \emph{full} (or \emph{induced}) \emph{subgraph} spanned by $X$ is the subgraph of $\Lambda$ whose vertex set is $X$ and that contains all edges of $\Lambda$ whose endpoints lie in $X$. For $v\in V$ the \emph{link} of $v$ is defined as $\lk(v)=\left\{w\in V\mid \left\{v,w\right\}\in E\right\}$. A vertex $v\in V$ is called \emph{isolated} if $\lk(v)=\leer$. 
Our graphs are interchangeably called diagrams, and they are usually partially or fully edge-labeled. We tacitly assume `graph (or diagram) isomorphism' to mean isomorphism of edge-labeled graphs. Following standard notation, we let $\pi_0(\Sigma)$ denote the \emph{set of connected components} of a simplicial complex or graph $\Sigma$.

We call the undirected labeled graph $\Gamma:=\Gamma_W = (V,E)$ with vertex set $V(\Gamma)=S$ and edge set $E(\Gamma) \ni \lbrace s_i,s_j \rbrace \iff m_{i,j} \geq 3$, where edges $\lbrace s_i,s_j\rbrace$ with $m_{i,j}\geq 4$ are labeled by $m_{i,j}$ and otherwise unlabeled, the \emph{Coxeter--Dynkin diagram} or simply \emph{Coxeter diagram}. 

Subsets of the index set $I$ are usually denoted by capital letters $J, K, L, M, X$. A \emph{standard parabolic subgroup} of $W$ is a group of the form $W_J := \spans{S_J}$ where $J \subseteq I$ and $S_J := \lbrace s_j \in S \vert \ j \in J \rbrace$. Note that $(W_J, S_J)$ is a Coxeter system for $W_J$ \cite[Chapter IV, Theorem~2]{Bourbaki_2002}. 
A \emph{parabolic subgroup} of $W$ is any subgroup conjugate to a standard parabolic. The trivial group $\{1\}$ and $W$ itself are considered (trivial) parabolic subgroups, corresponding to the (trivial) subsets $\varnothing$ and $I$ of $I$, respectively. 
 For a subset $K\subset W$, the \emph{parabolic closure} of $K$, denoted $\mathrm{Pc}(K)$, is the smallest (with respect to set-theoretic containment) parabolic subgroup of $W$ containing $K$; cf. \cite{Qi2007}.  
 
Given a Coxeter diagram $\Gamma$ we often denote the corresponding Coxeter group $W$ by $W_\Gamma$, which is obtained via a presentation as in the beginning of this section. Though we remind the reader that, if $W = W_\Gamma$ is a Coxeter group associated to the Coxeter diagram $\Gamma$, there might exist another Coxeter diagram $\Gamma'$ such that $W \cong W_{\Gamma'}$ as abstract groups; cf. \cite{BernhardExample,BMMN,CoxeterGalaxy}. Observe that all full subgraphs of a Coxeter diagram induced by a vertex subset $S_J = \lbrace s_j \in S \vert \ j \in J\rbrace \subseteq V(\Gamma) =S$ are also Coxeter diagrams $\Gamma_{W_J}$ of the standard parabolic subgroup $W_J$; cf. \cite[Chapter IV, Theorem~2]{Bourbaki_2002}. To ease notation we write $\Gamma_J$ instead of $\Gamma_{W_J}$. 

A Coxeter diagram $\Gamma$ (or system) and its underlying Coxeter group $W_\Gamma$ is called \emph{reducible} if the graph $\Gamma$ decomposes of a disjoint union of at least two nonempty full subgraphs $\Gamma_1$ and $\Gamma_2$. (In this case, $W_\Gamma$ decomposes as a direct product $W_{\Gamma_1} \times W_{\Gamma_2}$.) Otherwise it is called \emph{irreducible}. Also, $\Gamma$ is called: \emph{spherical}, if $W_\Gamma$ is a finite group; \emph{odd}, if all of its edges are unlabeled or labeled with elements of $2\N+1$; \emph{even}, if all of its edge-labels are contained in $2\N \cup \{\infty\}$. 

A nontrivial element $w \in W_\Gamma$ is an \emph{involution} if the order of $w$ is $2$. 
The \emph{rank of an involution} $w$ is defined as the rank of the {parabolic closure} of $w$. Note that the rank of an involution thus depends on the underlying Coxeter diagram $\Gamma$.

We need a few remarks about longest elements. Recall that the \emph{longest element} in a Coxeter group $W$, usually denoted by $w_0$, is the element with maximal word length with respect to the given Coxeter generating set. It exists only in finite Coxeter groups (and does not otherwise), in which case it is always an involution; see, e.g., \cite[Lemma~4.6.1]{Davis2008}. Sometimes the longest element is a reflection, i.e., it is conjugate to a Coxeter generator. As it turns out, the longest element can also be a central element. 
For example, in $\tA_1\sqcup \tA_1$ with nodes $s_1$ and $s_2$ the longest element in $W_{\tA_1\sqcup \tA_1}$ is $s_1s_2$, which is central and has rank $2$. The longest element in $W_{\tA_3}$, where $V(\tA_3) = \{s_1, s_2, s_3\}$, is the word $s_1s_2s_3s_2s_1$, which is a rank $1$ noncentral reflection.

The following observation is well-known: 

\begin{lemma}[Center of a Coxeter group]
    \label{IrreducibleWithCenter}
    Let $W$ be an irreducible Coxeter group. The center of $W$ is nontrivial if and only if $W\cong W_\Gamma$ with $\Gamma$ isomorphic to one of the following (spherical) Coxeter diagrams: $\tA_1$, $\tB_n$ (with $n \geq 2$), $\tD_{2n}$ (with $n \geq 2$), $\tE_7$, $\tE_8$, $\tG$, $\tF$, $\tH_3$, $\tH_4$ or $\tI(2m)$ (with $m\geq 2$); cf. \Cref{fig:Tabelle}. Moreover, in such cases the center of $W_\Gamma$ is precisely the subgroup $\spans{w_0}$ of order two, where $w_0$ is the longest element of $W_\Gamma$.
\end{lemma}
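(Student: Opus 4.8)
The plan is to avoid a case-by-case computation for as long as possible by working inside Tits' geometric representation, and to invoke the classification of finite irreducible Coxeter groups only at the very end. So let $(W,S)$ be an irreducible Coxeter system of rank $n$, realized faithfully on $V=\R^{n}$ with simple roots $\alpha_{1},\dots,\alpha_{n}$ (a basis of $V$) and the $W$-invariant symmetric bilinear form $B$ with $B(\alpha_{i},\alpha_{j})=-\cos(\pi/m_{i,j})$ when $m_{i,j}<\infty$ and $B(\alpha_{i},\alpha_{j})=-1$ when $m_{i,j}=\infty$. The central observation is that a central element is forced to act as $\pm\mathrm{id}$ on $V$.

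First I would show that every $z\in Z(W)$ acts by a scalar. Since $z$ commutes with each simple reflection $s_{i}$, we get $s_{i}=zs_{i}z^{-1}=s_{z\alpha_{i}}$, and as a reflection determines its root up to sign this forces $z\alpha_{i}=\varepsilon_{i}\alpha_{i}$ with $\varepsilon_{i}\in\{\pm1\}$. Because the $\alpha_{i}$ form a basis, $z$ is diagonal with eigenvalues $\pm1$, so $z^{2}=\mathrm{id}$ and, by faithfulness, $z$ is an involution. Whenever $m_{i,j}\geq 3$ one has $B(\alpha_{i},\alpha_{j})\neq 0$, and $B$-invariance gives $\varepsilon_{i}\varepsilon_{j}=1$; since the Coxeter diagram $\Gamma$ is connected, all signs coincide, so $z=+\mathrm{id}$ or $z=-\mathrm{id}$.

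This already yields most of the statement. If $z=+\mathrm{id}$ then $z=1$, so $Z(W)\setminus\{1\}$ can contain only the element acting as $-\mathrm{id}$; hence $|Z(W)|\leq 2$, with $Z(W)$ nontrivial precisely when $-\mathrm{id}\in W$. When this element exists it sends every positive root to a negative root, so it coincides with the longest element $w_{0}$, giving $Z(W)=\spans{w_{0}}$ of order two and proving the ``moreover'' clause. Furthermore, an element acting as $-\mathrm{id}$ reverses all of $\Phi^{+}$, so its word length equals $|\Phi^{+}|$; as every group element has finite length, this is possible only if $\Phi$ is finite, i.e.\ only if $W$ is finite. Thus infinite irreducible Coxeter groups are centerless, and the problem reduces to deciding, for each finite irreducible type, whether $-\mathrm{id}\in W$.

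The remaining step is the finite classification: for each type in $\tA_{n},\tB_{n},\tD_{n},\tE_{6},\tE_{7},\tE_{8},\tF,\tG,\tH_{3},\tH_{4},\tI(m)$ one checks whether $w_{0}=-\mathrm{id}$, equivalently whether the opposition involution $-w_{0}$ acts trivially on the diagram. For the dihedral groups $\tI(m)$ the element $-\mathrm{id}$ is the rotation by $\pi$, which lies in $W$ iff $m$ is even; in type $\tA_{n}$ the longest element is the order-reversing permutation with $w_{0}\alpha_{i}=-\alpha_{n+1-i}$, so $w_{0}=-\mathrm{id}$ only for $n=1$; and realizing $\tB_{n}$ and $\tD_{n}$ as (even) signed permutations shows $-\mathrm{id}\in W$ always in type $\tB_{n}$ but precisely when $n$ is even in type $\tD_{n}$. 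These account for $\tA_{1}$, $\tB_{n}$ ($n\geq 2$), $\tD_{2n}$ ($n\geq 2$) and the even dihedral groups $\tI(2m)$ (which subsume $\tG=\tI(6)$). The exceptional cases are then settled from standard root-system data: $\tH_{3}\cong A_{5}\times\Z/2\Z$ contains the central inversion, while for $\tE_{6},\tE_{7},\tE_{8},\tF,\tH_{4}$ one applies the classical criterion that $-\mathrm{id}\in W$ iff all exponents are odd, which holds for $\tE_{7},\tE_{8},\tF,\tH_{4}$ but fails for $\tE_{6}$ (whose exponents include even numbers). I expect the main obstacle to lie exactly in this last, non-uniform step: the reduction to ``$-\mathrm{id}\in W$'' is clean and type-independent, but deciding membership for the exceptional groups rests on explicit tables (or on separately proving the exponent criterion), for which no slick uniform argument is available.
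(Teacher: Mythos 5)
Your proposal is correct, but it takes a genuinely different route from the paper. The paper's proof of this lemma is purely by citation: the infinite (centerless) case is deferred to Davis, Abramenko--Brown, and Qi; the list of spherical types to the classification as recorded by Richardson and Davis; and the identification of the center with $\spans{w_0}$ to Carter and Abramenko--Brown. You instead give a largely self-contained argument inside the Tits representation: centrality forces each simple root to be an eigenvector with eigenvalue $\pm 1$, connectedness of the diagram and invariance of the bilinear form $B$ propagate a single sign, so any nontrivial central element must act as $-\mathrm{id}$; this simultaneously shows $|Z(W)|\leq 2$, that a nontrivial central element is the longest element $w_0$ (proving the ``moreover'' clause), and that infinite irreducible groups are centerless, since an element reversing all of $\Phi^+$ has length $|\Phi^+|$. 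What your approach buys is uniformity and independence from the literature for the entire structural reduction; what it costs is that the final membership test ``$-\mathrm{id}\in W$'' must still be settled type by type, using explicit models for $\tA_n$, $\tB_n$, $\tD_n$, $\tI(m)$ and the exponent criterion (or tables) for $\tE_6$, $\tE_7$, $\tE_8$, $\tF$, $\tH_4$ --- a step you correctly identify as irreducibly non-uniform, and which is exactly the point where the paper, too, falls back on the classification. Two minor points you should make explicit if this were written out in full: the fact that a reflection in the Tits representation determines its root up to sign, and the implication ``$\Phi^+$ finite $\Rightarrow$ $W$ finite'' (via injectivity of inversion sets); both are standard but are load-bearing in your argument.
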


\begin{proof}
In the infinite case cf. \cite[Theorem~D.2.10]{Davis2008}, \cite[Proposition~2.73]{AbramenkoBrown} or \cite{Qi2009}. The first claim in the spherical case follows from Coxeter's classification of finite irreducible Coxeter groups; see, for instance, \cite[Preliminaries, 1.9--1.12]{Richardson_1982} or \cite[Remark~13.1.8]{Davis2008}. The second claim has been long known for Weyl groups (\cite{Carter}), and can be readily checked for the remaining types $\tH$ and $\mathtt{I}$; for two elementary proofs, we refer the reader to \cite[Corollary~1.91]{AbramenkoBrown}. 
\end{proof}

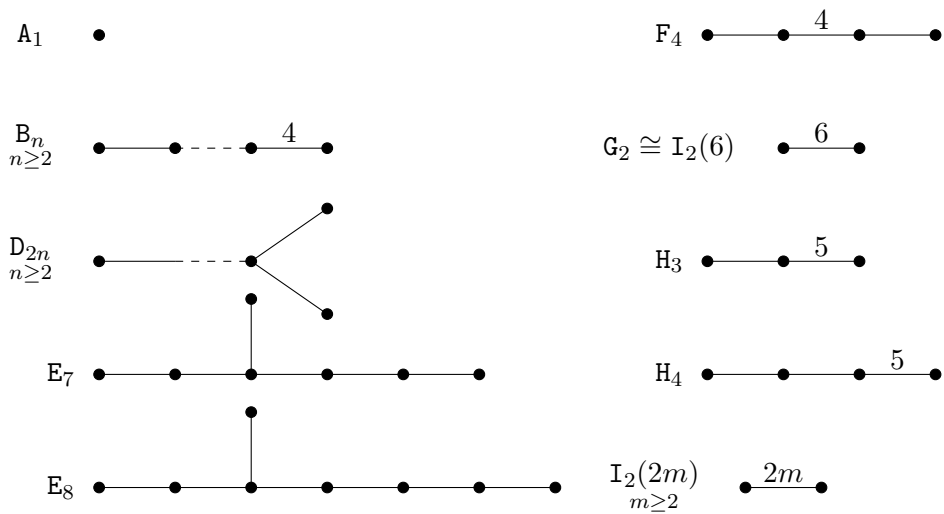
\begin{figure}[htb!]
    \begin{center}
        \captionsetup{justification=centering}
        \begin{tikzpicture}
            \draw[fill=black]  (0,0) circle (2pt);
            \node at (-0.9,0) {$\tA_{1}$};
            \draw[fill=black]  (0,-1.5) circle (2pt);
            \draw[fill=black]  (1,-1.5) circle (2pt);
            \draw[fill=black]  (2,-1.5) circle (2pt);
            \draw (0,-1.5)--(1,-1.5);
            \draw[dashed] (1,-1.5)--(2,-1.5);
            \draw[fill=black] (3,-1.5) circle (2pt);
            \draw (2,-1.5)--(3,-1.5);
            \node at (2.5, -1.3) {$4$};
            \node at (-0.9,-1.5) {$\underset{n\geq2}{\tB_{n}}$};
            
            \draw[fill=black]  (0,-3) circle (2pt);
            \draw[fill=black]  (2,-3) circle (2pt);
            \draw (0,-3)--(1,-3);
            \draw[dashed] (1,-3)--(2,-3);
            \draw[fill=black] (3,-2.3) circle (2pt);
            \draw[fill=black] (3,-3.7) circle (2pt);
            \draw (2,-3)--(3, -2.3);
            \draw (2,-3)--(3, -3.7);
            \node at (-0.9,-3) {$\underset{n\geq 2}{\tD_{2n}}$};
    
            \draw (0,-4.5)--(5,-4.5);
            \draw (2,-3.5)--(2,-4.5);
            \draw[fill=black] (0,-4.5) circle (2pt); 
            \draw[fill=black] (1,-4.5) circle (2pt);
            \draw[fill=black] (2,-3.5) circle (2pt);
            \draw[fill=black] (3,-4.5) circle (2pt);
            \draw[fill=black] (4,-4.5) circle (2pt);
            \draw[fill=black] (5,-4.5) circle (2pt);
            \draw[fill=black] (2,-4.5) circle (2pt);
            \node at (-0.5,-4.5) {$\tE_7$};
    
            \draw (0,-6)--(6,-6);
            \draw (2,-5)--(2,-6);
            \draw[fill=black] (0,-6) circle (2pt); 
            \draw[fill=black] (1,-6) circle (2pt);
            \draw[fill=black] (2,-6) circle (2pt);
            \draw[fill=black] (3,-6) circle (2pt);
            \draw[fill=black] (4,-6) circle (2pt);
            \draw[fill=black] (5,-6) circle (2pt);
            \draw[fill=black] (6,-6) circle (2pt);
            \draw[fill=black] (2,-5) circle (2pt);
            \node at (-0.5,-6) {$\tE_8$};

            \draw[fill=black] (8,0) circle (2pt);
            \draw[fill=black] (9,0) circle (2pt);
            \draw[fill=black] (10,0) circle (2pt);
            \draw[fill=black] (11,0) circle (2pt);
            \draw (8,0)--(11,0);
            \node at (9.5, 0.2) {$4$};
            \node at (7.5,0) {$\tF$};
       
            \draw[fill=black] (9,-1.5) circle (2pt);
            \draw[fill=black] (10,-1.5) circle (2pt);
            \draw (9,-1.5)--(10,-1.5);
            \node at (9.5,-1.3)  {$6$};
            \node at (7.5,-1.5) {$\tG \cong \tI(6)$};
    
            \draw[fill=black] (8,-3) circle (2pt);
            \draw[fill=black] (9,-3) circle (2pt);
            \draw[fill=black] (10,-3) circle (2pt);
            \draw (8,-3)--(10,-3);
            \node at (9.5, -2.8){$5$};
            \node at (7.5,-3) {$\tH_3$};
    
            \draw[fill=black] (8,-4.5) circle (2pt);
            \draw[fill=black] (9,-4.5) circle (2pt);
            \draw[fill=black] (10,-4.5) circle (2pt);
            \draw[fill=black] (11,-4.5) circle (2pt);
            \draw (8,-4.5)--(11, -4.5);
            \node at (10.5, -4.3){$5$};
            \node at (7.5,-4.5) {$\tH_4$};
      
            \draw[fill=black] (8.5,-6) circle (2pt);
            \draw[fill=black] (9.5,-6) circle (2pt);
            \draw (8.5,-6)--(9.5,-6);
            \node at (9, -5.8){$2m$};
            \node at (7.3,-6) {$\underset{m \geq 2}{\tI(2m)}$};
                    
            \end{tikzpicture}
        \caption{The Coxeter diagrams yielding irreducible Coxeter groups with nontrivial centres.}
        \label{fig:Tabelle}
        \end{center}
    \end{figure}

Besides determining the center of (spherical) Coxeter groups, longest elements can also be used to represent conjugacy classes of involutions.

\begin{lemma}[Involutions and longest elements] \label{lem:LongestEltsRepresentInvolutions}
    Let $(W,S)$ be a Coxeter system of finite rank $|I|$, with $S = \{s_i \mid i \in I\}$, and let $c \in W$ be an involution. Then there exists a subset $J \subseteq I$ of smallest possible cardinality $|J| \geq 1$ such that $W_J$ is spherical, the longest element $w_0$ of $W_J$ is central, and $c$ is conjugate to $w_0$.
\end{lemma}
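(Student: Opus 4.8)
The plan is to realize $c$ geometrically and exploit the interplay between the parabolic closure of $c$ and Steinberg's fixed-point theorem, reducing everything to the statement that $c$ acts as $-\mathrm{id}$ on the relevant root space.

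\textbf{Reduction to the spherical case.} Since $c$ has order two, the cyclic group $\langle c\rangle$ is finite, hence elliptic on the (complete) \textup{CAT(0)} Davis complex, so it fixes a point and is therefore contained in a conjugate of a spherical standard parabolic subgroup (this is exactly the finiteness input recalled in the introduction, cf. \cite{Davis2008,BridsonHaefliger1999}). Replacing $c$ by a conjugate, I may thus assume $c$ lies in a spherical standard parabolic $W_K$. As $W_K$ is itself a parabolic subgroup containing $c$, the parabolic closure $\mathrm{Pc}(c)$ --- which is well-defined because intersections of parabolics are parabolic \cite{Qi2007} --- is contained in $W_K$ and is hence spherical. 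After one further conjugation I may assume $P := \mathrm{Pc}(c) = W_J$ is a standard spherical parabolic, with $J \neq \leer$ since $c \neq 1$. As all of this preserves the conjugacy class of $c$, it suffices to prove $c = w_0(W_J)$ and that $w_0(W_J)$ is central in $W_J$.

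\textbf{The geometric core.} Now I would argue inside the finite group $W_J$ acting essentially on the span $V_J$ of its simple roots. The key identity is that the parabolic closure equals the pointwise stabilizer of the fixed space,
\[
\mathrm{Pc}(c) = \{ w \in W_J \mid w v = v \ \text{ for all } v \in \Fix_{V_J}(c)\}.
\]
Indeed, by Steinberg's fixed-point theorem this pointwise stabilizer is a parabolic subgroup containing $c$; conversely any parabolic $Q \ni c$ is the pointwise stabilizer of its own fixed space, which (as $c$ fixes it) is contained in $\Fix_{V_J}(c)$, so $Q$ contains the stabilizer above. Comparing with the minimality defining $\mathrm{Pc}(c)$ yields the equality. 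Since $\mathrm{Pc}(c) = W_J$ and $W_J$ has no nonzero global fixed vector in $V_J$, the displayed identity forces $\Fix_{V_J}(c) = 0$, i.e.\ $c$ acts as $-\mathrm{id}$ on $V_J$. The element acting as $-\mathrm{id}$ sends every positive root to a negative one, so it is the longest element $w_0$ of $W_J$, and $-\mathrm{id}$ is central in the orthogonal group; hence $w_0 = c$ is central in $W_J$. On each irreducible factor this recovers precisely the list in \Cref{IrreducibleWithCenter}.

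\textbf{Minimality and the expected obstacle.} Minimality of $|J|$ is then automatic: since $c$ acts as $-\mathrm{id}$ on $V_J$ we have $\mathrm{Pc}(c) = W_J$ of rank $|J|$, so $|J|$ equals the rank of the involution $c$. Because conjugation carries parabolic closures to parabolic closures, any other subset $J'$ meeting the three stated conditions again forces $w_0(W_{J'}) = -\mathrm{id}$ on $V_{J'}$, giving $|J'| = \rk(\mathrm{Pc}(c)) = |J|$; thus every valid $J$ has cardinality $\rk(c) \geq 1$ and the minimum is realized by the $J$ produced above. I expect the genuine subtlety to lie not in the geometry but in the first step: one must ensure that passing to $\mathrm{Pc}(c)$ keeps us inside the spherical world and that it can be conjugated to a \emph{standard} parabolic without losing track of $c$. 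Once $c$ is pinned inside a finite $W_J$ with $\mathrm{Pc}(c) = W_J$, the Steinberg identification together with essentiality makes the conclusion $c = -\mathrm{id} = w_0$ nearly immediate. A cleaner representation-theoretic alternative would be to invoke Springer's analysis of involutions via their $(-1)$-eigenspaces \cite{Springer82} directly, which is the route hinted at in the introduction.
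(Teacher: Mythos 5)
Your proof is correct, but it takes a genuinely different route from the paper's. Both arguments share the same reduction: the parabolic closure $\mathrm{Pc}(c)$ is spherical (you obtain this from ellipticity of $\langle c\rangle$ on the Davis complex; the paper cites Tits' theorem, \cite[Corollary~D.2.9]{Davis2008}, together with \cite[Theorem~3.4]{Qi2007}) and can be conjugated to a standard parabolic. From there the paper outsources the core step to representation theory: it applies Springer's result \cite[Proposition~3]{Springer82}, which is stated for Weyl groups, and then disposes of the non-crystallographic types $\tH_3$, $\tH_4$, $\tI(m)$ by a separate direct check. You instead give a uniform geometric argument: Steinberg's fixed-point theorem identifies $\mathrm{Pc}(c)$ with the pointwise stabilizer of $\Fix_{V_J}(c)$, so $\mathrm{Pc}(c)=W_J$ together with essentiality of the $W_J$-action on $V_J$ forces $\Fix_{V_J}(c)=0$, i.e.\ $c=-\mathrm{id}$ on $V_J$, whence $c=w_0$ is central in $W_J$. (In fact you only need one half of your displayed identity: the pointwise stabilizer of $\Fix_{V_J}(c)$ is a parabolic of $W$ containing $c$, so minimality already gives $W_J=\mathrm{Pc}(c)\subseteq \{w \in W_J \mid wv=v \text{ on } \Fix_{V_J}(c)\}\subseteq W_J$; the reverse inclusion for arbitrary $Q\ni c$ is not needed.) This $(-1)$-eigenspace argument is essentially Richardson's original approach in \cite{Richardson_1982}, which the authors explicitly chose to bypass in favour of Springer's work. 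What each approach buys: yours is uniform across all finite types --- no case distinction between Weyl and non-Weyl groups and no black-box citation --- at the cost of invoking the classical machinery of finite reflection groups (Steinberg's theorem, parabolics as pointwise stabilizers of subspaces, essentiality); the paper's proof is shorter on the page and consistent with its stated aim of relying on Springer rather than Richardson, but it is not self-contained for types $\tH$ and $\tI$, where Springer's proposition does not formally apply. Your closing minimality argument also agrees with the paper's mechanism (cf.\ \Cref{lem:ConjLongest}): any valid $J'$ has $W_{J'}$ conjugate to $\mathrm{Pc}(c)$, so all valid subsets share the same cardinality, namely the rank of the involution.
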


\begin{proof}
    Given the involution $c$, consider its parabolic closure $\mathrm{Pc}(c) \subseteq W$. Since $c$ has finite order, a known theorem of Tits (cf. \cite[Corollary~D.2.9]{Davis2008}) together with minimality of $\mathrm{Pc}(c)$ imply that the parabolic $\mathrm{Pc}(c)$ is in fact spherical. Moreover, it is conjugate to some standard parabolic $W_K$ with $|K|$ minimal; see, e.g., \cite[Theorem~3.4]{Qi2007}. Thus $c$ is conjugate to some involution in the finite Coxeter group $W_K$. In case $W_K$ is a Weyl group, i.e., with irreducible components of classical types $\tA_n$ to $\tG$, apply \cite[Proposition~3]{Springer82} to conclude that such an involution is conjugate to the longest element of a (unique, standard) parabolic subgroup $W_J \subseteq W_K$ (with $J \subseteq K$) whose longest element is central. In case types $\tH_n$ or $\tI(n)$ are involved, one can directly check that the conclusion of Springer's proposition still holds, whence the lemma.
\end{proof}

We remark that the content of \cref{lem:LongestEltsRepresentInvolutions} is also part of Richardson's main results in the paper \cite{Richardson_1982}, namely \cite[Theorem~A(a)]{Richardson_1982}. While our proof is shorter and somewhat more elementary, we make use of Springer's work \cite{Springer82} done independently of Richardson's.

\begin{lemma}[Conjugated longest elements] \label{lem:ConjLongest}
    Suppose $W_J$ and $W_K$ are standard parabolic subgroups such that their irredicuble components are of types in \Cref{fig:Tabelle}. Let $c_J$ and $c_K$ be the longest elements of $W_J$ and $W_K$, respectively. Then $\mathrm{Pc}(c_J) = W_J$, $\mathrm{Pc}(c_K) = W_K$, and $c_J$ is conjugate to $c_K$ if and only if $W_J$ and $W_K$ are conjugate.
\end{lemma}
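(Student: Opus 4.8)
The plan is to prove \cref{lem:ConjLongest} in two stages: first establish the parabolic closure claims, then handle the conjugacy equivalence. For the parabolic closure statements, I would argue that each $c_J$ is a central involution in its respective parabolic $W_J$ (by \cref{IrreducibleWithCenter}, since the irreducible components are exactly the diagrams listed in \cref{fig:Tabelle}). The point is that $c_J$ generates the center of $W_J$, and hence no proper standard parabolic of $W_J$ can contain $c_J$: if $c_J \in W_L$ for some $L \subsetneq J$, then the $S_J$-support of $c_J$ would be contained in $L$, contradicting that $c_J$ as the longest element has full support $J$. By minimality of the parabolic closure and the fact that $W_J$ is itself a (standard, hence) parabolic subgroup containing $c_J$, this forces $\mathrm{Pc}(c_J) = W_J$, and symmetrically $\mathrm{Pc}(c_K) = W_K$.

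For the main equivalence, the backward direction is essentially formal: if $W_J$ and $W_K$ are conjugate, say $w W_J w^{-1} = W_K$, then conjugation by $w$ carries the center of $W_J$ to the center of $W_K$. Since $c_J$ and $c_K$ are the respective generators of these (order-two) centers by \cref{IrreducibleWithCenter}, the unique nontrivial central element $w c_J w^{-1}$ of $W_K$ must equal $c_K$, so $c_J$ and $c_K$ are conjugate. The forward direction is where the real content lies. Suppose $c_J$ is conjugate to $c_K$, say $w c_J w^{-1} = c_K$. I would then use the parabolic closure being a conjugation-equivariant construction: one always has $\mathrm{Pc}(w c_J w^{-1}) = w\, \mathrm{Pc}(c_J)\, w^{-1}$. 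Combining this with the first part of the lemma yields
\[
W_K = \mathrm{Pc}(c_K) = \mathrm{Pc}(w c_J w^{-1}) = w\, \mathrm{Pc}(c_J)\, w^{-1} = w W_J w^{-1},
\]
which is precisely the statement that $W_J$ and $W_K$ are conjugate.

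The key structural input is therefore the equivariance of the parabolic closure under conjugation, which follows directly from its characterization as the smallest parabolic subgroup containing a given element (parabolic subgroups form a conjugation-stable family, and $w(-)w^{-1}$ is an order-preserving bijection on this family). I expect the main obstacle, or at least the step requiring the most care, to be the verification that $c_J$ has full support $J$, i.e., that the longest element of a Coxeter group of the given types genuinely involves every generator. This is standard for finite Coxeter groups but should be stated explicitly, since it is what prevents $c_J$ from living in a smaller standard parabolic and thereby pins down $\mathrm{Pc}(c_J) = W_J$ exactly. Once the parabolic closure identifications are secured, both directions of the equivalence reduce to the equivariance identity above together with the uniqueness of the central involution furnished by \cref{IrreducibleWithCenter}.
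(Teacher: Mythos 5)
Your skeleton --- first establish $\mathrm{Pc}(c_J)=W_J$ and $\mathrm{Pc}(c_K)=W_K$, then run conjugation-equivariance of $\mathrm{Pc}$ --- is the same as the paper's, but two of your justifications have genuine gaps.

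The first and more serious gap is in your proof that $\mathrm{Pc}(c_J)=W_J$. The parabolic closure is the smallest \emph{parabolic} subgroup containing $c_J$, where a parabolic is any conjugate $xW_Lx^{-1}$ of a standard parabolic; your full-support argument only rules out the proper \emph{standard} parabolics $W_L$, $L\subsetneq J$, and minimality then gives only $\mathrm{Pc}(c_J)\subseteq W_J$, not equality. Full support genuinely fails to control non-standard parabolics: in $W_{\tA_3}$ the element $s_1s_2s_3s_2s_1$ (cf.\ \cref{sec:preliminaries}) has full support $\{s_1,s_2,s_3\}$, yet it is a reflection, so its parabolic closure is the rank-one parabolic $\langle s_1s_2s_3s_2s_1\rangle=(s_1s_2)\langle s_3\rangle(s_1s_2)^{-1}$, properly contained in $W_{\tA_3}$. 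This is exactly why the paper's proof uses \emph{both} facts: the reduced expressions of $c_J$ involve every letter of $S_J$ \emph{and} $c_J$ commutes with every such generator. You state that $c_J$ generates the center but never use commutativity in this step. A precise repair: $\mathrm{Pc}(c_J)$ is a parabolic subgroup of $W$ contained in $W_J$, hence a parabolic subgroup of the system $(W_J,S_J)$ (see, e.g., \cite{Qi2007}), say $\mathrm{Pc}(c_J)=xW_Lx^{-1}$ with $x\in W_J$ and $L\subseteq J$; centrality now gives $c_J=x^{-1}c_Jx\in W_L$, and only at this point does your full-support argument apply to force $L=J$, whence $\mathrm{Pc}(c_J)=xW_Jx^{-1}=W_J$.

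The second gap is in your backward direction. The lemma permits $W_J$ and $W_K$ to be \emph{reducible} (products of types from \Cref{fig:Tabelle}), and then \cref{IrreducibleWithCenter}, applied componentwise, gives a center isomorphic to $(\Z/2\Z)^m$ with $m$ the number of irreducible components; already for type $\tA_1\sqcup\tA_1$ there are three nontrivial central elements, so there is no ``unique nontrivial central element'' with which to identify $wc_Jw^{-1}$. The repair is the argument underlying the paper's \cref{Products}: if $W_J$ and $W_K$ are conjugate, then by \cite[Proposition~4.5.10]{Davis2008} some $u\in W$ satisfies $uS_Ju^{-1}=S_K$; conjugation by this $u$ is an isomorphism of Coxeter systems $(W_J,S_J)\to(W_K,S_K)$, hence preserves word length and carries the longest element to the longest element, i.e., $uc_Ju^{-1}=c_K$. (The paper instead quotes \cref{lem:LongestEltsRepresentInvolutions} at this point.) Your forward direction, via the equivariance $\mathrm{Pc}(wc_Jw^{-1})=w\,\mathrm{Pc}(c_J)\,w^{-1}$, coincides with the paper's and is correct once the closure identities are actually in place.
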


\begin{proof}
    By \cref{IrreducibleWithCenter}, $c_J$ is central. Since a reduced expression for $c_J$ involves all letters of $S_J$ and $c_J$ commutes with all such generators, 
it follows that $\mathrm{Pc}(c_J) = W_J$. 
(Similarly for $c_K$.) Now assuming $c_J = w c_K w^{-1}$, we obtain $W_J=\mathrm{Pc}(c_J) = \mathrm{Pc}(w c_K w^{-1}) = w\mathrm{Pc}(c_K)w^{-1} = w W_K w^{-1}$. The converse follows from \cref{lem:LongestEltsRepresentInvolutions}. 
\end{proof}

In particular, \Cref{IrreducibleWithCenter}, \Cref{lem:LongestEltsRepresentInvolutions} and \Cref{lem:ConjLongest} show that for a given Coxeter group $W_\Gamma$ the number of conjugacy classes of involutions $\cc(W_\Gamma)$ is equal to the number of conjugacy classes of standard parabolic subgroups of $W_\Gamma$ such that their irreducible components are of types in \Cref{fig:Tabelle}. 

\section{Odd-adjacency and higher rank odd graphs} 
\label{sec:OddGraphs}

In this section we generalize the concept of odd graphs and determine the number of conjugacy classes of involutions by counting connected components of these graphs. 

\begin{definition}[Adjacent subgraphs]
    \label{def:adj-graphs}
    Let $\Gamma$ be a Coxeter diagram for the Coxeter system $(W,S)$. Two full subgraphs $\Gamma_{J}$ and $\Gamma_{K}$ are \emph{adjacent} if the diagrams $\Gamma_{J}$ and $\Gamma_{K}$ are isomorphic and $V(\Gamma_{J})=S_J$ differs from $V(\Gamma_{K})= S_K$ only by one vertex. 
\end{definition}

For later use we remark the following observation on adjacent Coxeter diagrams:

\begin{lemma}
\label{lem:onevertex}
    Let $\Gamma$ be a Coxeter diagram and let $\Gamma_{J}$ and $\Gamma_{K}$ be two adjacent full subgraphs, and let $s_j \in V(\Gamma_{J}) \setminus V(\Gamma_{K})$ and $s_k \in V(\Gamma_{K}) \setminus V(\Gamma_{J})$ be the unique vertices for which the two graphs differ from one another. 
If $\Gamma_{J}$ (equivalently, $\Gamma_{K}$) is of type $\tB_n$, $\tD_{2n}$, $\tE_7$, $\tE_8$, $\tG$, $\tF$, $\tH_3$, $\tH_4$ or $\tI(2m)$ where $n, m \geq 2$, then the standard parabolic subgroup with generating set $V(\Gamma_{J}) \cup V(\Gamma_{K}) = S_{J \cup K }$ is infinite. 
\end{lemma}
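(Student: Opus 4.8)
\emph{Strategy.} The plan is to argue by contradiction: I assume that $W_{J\cup K}$ is finite and deduce that $\Gamma_J$ would have to be of type $\tA$, contradicting the hypothesis. The first step is to extract the combinatorial content of adjacency. Writing $C := J\cap K$, the two subgraphs share the full subgraph $\Gamma_C = \Gamma_J - s_j = \Gamma_K - s_k$, and the isomorphism $\Gamma_J\cong\Gamma_K$ witnessing adjacency restricts to the identity on $\Gamma_C$ and carries $s_j$ to $s_k$. Consequently $s_j$ and $s_k$ are \emph{twins}: they have exactly the same neighbours inside $\Gamma_C$, with the same edge-labels, i.e.\ $N := \lk(s_j)\cap S_C = \lk(s_k)\cap S_C$. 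Note also that $V(\Gamma_{J\cup K}) = S_C\cup\{s_j,s_k\}$ has exactly one more vertex than $S_J$, and that $\Gamma_J = \Gamma_{J\cup K}-s_k$ and $\Gamma_K = \Gamma_{J\cup K}-s_j$ are full subgraphs.

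\emph{Localizing to one component.} Since every listed type is irreducible of rank $\geq 2$, the vertex $s_j$ is not isolated in $\Gamma_J$, so the common neighbourhood $N$ is nonempty; fix a common neighbour $v\in N$. If $s_j$ and $s_k$ happened to be adjacent in $\Gamma_{J\cup K}$ (that is, $m_{jk}\geq 3$), then $\{s_j,s_k,v\}$ would span a triangle, so $\Gamma_{J\cup K}$ would contain a cycle; as every connected finite-type Coxeter diagram is a tree, this alone forces $W_{J\cup K}$ to be infinite. So I may assume $m_{jk}=2$, whence $s_j,s_k$ are non-adjacent twins sharing the neighbour $v$ and therefore lie in a single connected component $D_0$ of $\Gamma_{J\cup K}$. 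If $W_{J\cup K}$ were finite, $D_0$ would be a finite irreducible diagram carrying a pair of non-adjacent twin vertices; and since $\Gamma_J = \Gamma_{J\cup K}-s_k$ is connected, this would force $\Gamma_{J\cup K}=D_0$ and $\Gamma_J = D_0 - s_k$.

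\emph{The classification punchline.} The decisive step is a short inspection of the finite irreducible Coxeter diagrams. Being trees, the only ones possessing two non-adjacent vertices with identical labelled neighbourhoods are $\tA_3$ (its two end vertices) and $\tD_n$ with $n\geq 4$ (its two fork vertices, with $\tD_4$ even carrying a triple). Deleting one such twin vertex yields $\tA_2$ from $\tA_3$ and $\tA_{n-1}$ from $\tD_n$; so in either case $\Gamma_J = D_0 - s_k$ is of type $\tA$. This contradicts the hypothesis that $\Gamma_J$ is of type $\tB_n$, $\tD_{2n}$, $\tE_7$, $\tE_8$, $\tG$, $\tF$, $\tH_3$, $\tH_4$ or $\tI(2m)$, each of which is non-$\tA$ (it carries either a branch node or an edge of label $\geq 4$). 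Hence $W_{J\cup K}$ cannot be finite.

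\emph{Main obstacle.} I expect the difficulty to be bookkeeping rather than conceptual. The point that must be handled carefully is the extraction of the twin property from the definition of adjacency: the relevant isomorphism must match the neighbourhoods of $s_j$ and $s_k$ in $\Gamma_C$ together with their labels, rather than being a mere abstract isomorphism $\Gamma_J\cong\Gamma_K$ --- otherwise genuinely finite configurations (for instance $\tF$ restricting to $\tB_3$ by deleting either of its two leaves) would appear to qualify. One must also verify the twin census of finite irreducible diagrams, in particular that $\tB_3$, the $\tE$-types, the $\tH$-types and $\tI(m)$ admit no twin pair while $\tA_3$ and $\tD_n$ do. Once the twin structure is isolated correctly, the classification input makes the argument uniform across all listed types and, pleasantly, independent of the actual value of $m_{jk}$.
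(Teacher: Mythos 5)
You have, in effect, discovered that \Cref{lem:onevertex} is false as stated, and your proof attempt founders on exactly that point. The step in your Strategy paragraph asserting that the adjacency isomorphism ``restricts to the identity on $\Gamma_C$'' (so that $s_j,s_k$ are labelled twins) is not implied by \Cref{def:adj-graphs}, and it cannot be repaired: the configuration you flag in your last paragraph as one that ``would appear to qualify'' genuinely does qualify. Concretely, take $\Gamma=\tF$ and $J=\{1,2,3\}$, $K=\{2,3,4\}$. Both induced subdiagrams are of type $\tB_3$, they are isomorphic as edge-labelled graphs (via $s_1\mapsto s_4$, $s_2\mapsto s_3$, $s_3\mapsto s_2$, which does \emph{not} fix the intersection), and they differ by exactly one vertex; so they are adjacent in the sense of \Cref{def:adj-graphs}, the type $\tB_3$ is on the list, yet $W_{J\cup K}=W_{\tF}$ is finite. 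Thus no proof of the statement from \Cref{def:adj-graphs} alone can exist, and your twin-extraction step is a genuine, unfixable gap rather than bookkeeping. For comparison, the paper's own proof is two sentences: $s_j$ and $s_k$ each attach to the common part (so $\Gamma_{J\cup K}$ is connected), and then ``the claim follows from the characterization of finite irreducible Coxeter groups.'' That classification check succeeds for every listed type \emph{except} the configuration above ($\tF$ is a finite irreducible rank-four diagram containing two adjacent copies of $\tB_3$), so the paper's proof breaks at the same place yours does; you have found an error in the paper, not merely an obstacle in your own argument.

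On repairs: your argument is correct and complete once the twin property is granted --- the triangle-versus-tree observation for $m_{jk}\geq 3$, the census that among finite irreducible diagrams only $\tA_3$ and $\tD_n$ carry non-adjacent twins, and the fact that deleting a twin leaves a type-$\tA$ diagram are all accurate. But note that strengthening ``adjacent'' to ``twin-adjacent'' is \emph{not} the repair the paper needs, because the adjacencies actually produced by \Cref{thm:Krammer-Deodhar} need not fix the intersection pointwise (in $\tA_4$, the longest element conjugates $W_{\{1,2,3\}}$ onto $W_{\{2,3,4\}}$ while swapping $s_2$ and $s_3$). The repair matching the lemma's use in \Cref{IrreducibleAdjacent,prop:CliquesConjugate} is to add Krammer's condition~(3): some $a\in W_{J\cup K}$ with $aW_Ja^{-1}=W_K$. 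That hypothesis fails in the $\tF$ configuration, since conjugation preserves each of the two reflection classes of $W_{\tF}$, while $W_{\{1,2,3\}}$ and $W_{\{2,3,4\}}$ contain six and three (respectively three and six) reflections from those classes. With condition~(3) added, either your connectivity-plus-classification skeleton or the paper's yields a correct lemma that still supports all subsequent applications.
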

\begin{proof}
    Note that 
the vertex $s_j$ (resp. $s_k$) is always connected to some vertex in $V(\Gamma_{J}) \cap V(\Gamma_{K})$. 
Thus the claim follows from the characterization of finite irreducible Coxeter groups.
    (For the list of finite irreducible Coxeter groups, see \cite[Chapter~VI, Theorem~1]{Bourbaki_2002}.)
\end{proof}

We now aim to generalize odd-adjacency for higher rank parabolic subgroups in order to define higher rank analogs of the odd-graphs appearing in \cite{BMMN}. 

\begin{definition}[Odd-adjacent standard parabolics]
    \label{def:odd-adjacent}
    Let $\Gamma$ be a Coxeter diagram. 
    Two full subgraphs $\Gamma_{J}$ and $\Gamma_{K}$  are called \emph{odd-adjacent} if  there exist $\ell\geq 1$ and $S_L, S_M, S_X \subseteq S$ such that 
    \begin{enumerate}
        \item $W_J= W_L\times W_X$, $W_K=W_M\times W_X$;
        \item $W_L\cong W_M\cong (\mathbb{Z}/2\mathbb{Z})^\ell$;
        \item $S_L$ differs from $S_M$ only by one generator; and
        \item there is an edge $\left\{s_l,s_m\right\}$ that is odd or unlabeled, where $s_l\in S_L\setminus S_M$ and $s_m\in S_M\setminus S_L$.
    \end{enumerate}
    In this case, we also say that $W_J$ and $W_K$ are \emph{odd-adjacent}. If $J=\{s\}$, $K=\{t\}$ are singletons and $\Gamma_J$ and $\Gamma_K$ are odd-adjacent, we also call the vertices $s,t\in V(\Gamma)$ odd-adjacent in $\Gamma$, in which case we call the edge $\{s,t\} \in E(\Gamma)$ an \emph{odd edge}.
\end{definition} 

\begin{remark}[Odd-adjacency implies adjacency]
    Note that, in the set-up of \cref{def:odd-adjacent}, the subgraphs $\Gamma_{J}$ and $\Gamma_{K}$ are adjacent in the sense of \cref{def:adj-graphs}. This follows from the decomposition in item~(1) and the condition in item~(3) since $V(\Gamma_J) = L \cup X$ and $V(\Gamma_K) = M \cup X$. 
\end{remark}

\begin{example} 
To illustrate odd-adjacency, let us consider the affine Coxeter group $W_{\atE_6}$, where $\atE_6$ is the Coxeter diagram in \Cref{fig:affineE6}. Then the parabolic subgroup $W_{\left\{1,3,7\right\}}$ is odd-adjacent to the parabolic subgoup $W_{\left\{1,4,7\right\}}$.
\end{example}
\begin{figure}[htb!]
	\begin{center}
	\captionsetup{justification=centering}
		\begin{tikzpicture}[scale=1, transform shape]
            \draw[fill=black] (0,0) circle (2pt);
            \node at (0,-0.3) {$s_1$};
            \draw[fill=black] (1,0) circle (2pt);
            \node at (1,-0.3) {$s_2$};
            \draw[fill=black] (2,0) circle (2pt);
            \node at (2,-0.3) {$s_3$};
            \draw[fill=black] (3,0) circle (2pt);
            \node at (3,-0.3) {$s_4$};
            \draw[fill=black] (4,0) circle (2pt);
            \node at (4,-0.3) {$s_5$};
            \draw[fill=black] (2,1) circle (2pt);
            \node at (2.3,1) {$s_6$};
            \draw[fill=black] (2,2) circle (2pt);
            \node at (2.3, 2) {$s_7$};
            \draw (0,0)--(4,0);
            \draw (2,0)--(2,2);
\end{tikzpicture}
	\end{center}
\caption{Type $\atE_6$.}
 \label{fig:affineE6}
\end{figure}

We now introduce the class of $k$-odd graphs. Although adjacency is defined for all parabolics, in the next definition only parabolics with nontrivial center play a role. 
    
\begin{definition}[Higher rank odd graphs]
\label{def:odd-graphs}
    Let $\Gamma$ be a Coxeter diagram with vertex set $S$ indexed by $I$.
    For $k \geq 1$, the \emph{$k$-odd graph} $\Gamma^k = (V({\Gamma^k}), E({\Gamma^k}))$ is defined as follows:
    \begin{itemize}
        \item The set of vertices $V(\Gamma^k)$ is the set of rank-$k$ standard parabolic subgroups $W_J$ for $J\subseteq I$ with the property that the subgraph of $\Gamma$ induced by $S_J$ splits into connected components all of which are of types listed in \Cref{fig:Tabelle}; 
        \item A pair of vertices $W_J, W_K$ spans an edge in $\Gamma^k$ if and only if $W_J$ and $W_K$ are odd-adjacent.
    \end{itemize}
\end{definition}

Note that $\Gamma^1=\Gamma_{odd}$ with labels removed, and $\Gamma^k = \leer$ if 
$k > |V(\Gamma)|$. Moreover, it is known that $|\pi_0(\Gamma^1)|$ is independent of the chosen Coxeter system $(W,S)$, see \cite[Proposition 2.2]{MoellerVarghese2023}.
Below we include some first examples of higher odd graphs. 

\begin{example}[Universal diagram] \label{ex:oddgraphsuniversal} 
Let $\Gamma$ be a complete graph on $n$ vertices, all of whose edge labels are $\infty$ as shown for $n=4$ in \Cref{fig:UniversalCoxeterGraph}. The corresponding Coxeter group $W_\Gamma$ is called a \emph{universal} Coxeter group as it maps onto any Coxeter group of rank $n$. In this case one readily checks that $\Gamma^1$ has $n$ isolated vertices, and all other $\Gamma^k$ with $k > 1$ are empty since no diagrams with $\infty$-labels occur in \Cref{fig:Tabelle}.
\end{example}

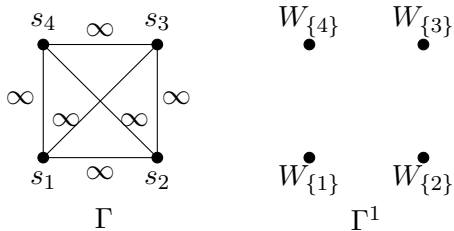
\begin{figure}[h]
	\begin{center}
	\captionsetup{justification=centering}
		\begin{tikzpicture}
			\draw[fill=black]  (0,0) circle (2pt);
			\draw[fill=black]  (1.5,0) circle (2pt);
            \draw[fill=black]  (1.5,1.5) circle (2pt);
			\draw[fill=black]  (0,1.5) circle (2pt);
            \node at (0,-0.3) {$s_1$};
            \node at (1.5,-0.3) {$s_2$};
            \node at (0,1.8) {$s_4$};
            \node at (1.5,1.8) {$s_3$};
            \draw (0,0)--(1.5,0);
            \draw (0,0)--(1.5, 1.5);
            \draw (0,0)--(0,1.5);
            \draw (1.5,0)--(1.5, 1.5);
            \draw (1.5,0)--(0,1.5);
            \draw (0,1.5)--(1.5, 1.5);
            \node at (0.8,-0.8) {$\Gamma$};
            
            \node at (-0.3,0.8) {$\infty$};
            \node at (0.75,-0.2) {$\infty$};
            \node at (1.75,0.8) {$\infty$};
            \node at (0.75,1.7) {$\infty$};
            \node at (0.3,0.5) {$\infty$};
            \node at (1.2,0.5) {$\infty$};

            \draw[fill=black]  (3.5,0) circle (2pt);
			\draw[fill=black]  (5,0) circle (2pt);
            \draw[fill=black]  (5,1.5) circle (2pt);
			\draw[fill=black]  (3.5,1.5) circle (2pt);
            \node at (3.5,-0.3) {$W_{\left\{1\right\}}$};
            \node at (5,-0.3) {$W_{\left\{2\right\}}$};
            \node at (3.5,1.8) {$W_{\left\{4\right\}}$};
            \node at (5,1.8) {$W_{\left\{3\right\}}$};
            \node at (4.25,-0.8) {$\Gamma^1$};

\end{tikzpicture}
    \end{center}
    \caption{Universal diagram $\Gamma$ with $4$ vertices and its $1$-odd graph $\Gamma^1$.}
    \label{fig:UniversalCoxeterGraph}
\end{figure}   

\begin{example}[Disconnected diagram]
\label{ex:oddgraphsabelian}
   Suppose the Coxeter diagram consists of $n$ isolated vertices as shown for $n=4$ in \Cref{fig:DisconnectedDiagram}. That is, all generators of $W_\Gamma$ commute and hence $W_\Gamma \cong (\Z/2\Z)^n$, an elementary abelian Coxeter group. Then any $k$-element subset of $V(\Gamma)$ yields a subdiagram consisting of $k$ copies of the $\tA_1$ diagram. In particular, any such vertex subset gives rise to a vertex of $\Gamma^k$. As no odd or unlabeled edges occur in $\Gamma$, no pair of standard parabolics in $\Gamma$ can be odd-adjacent. Hence $\Gamma^k$ consists of exactly $\binom{n}{k}$ isolated vertices for every $k \in \left\{1,\ldots,n\right\}$.
\end{example}

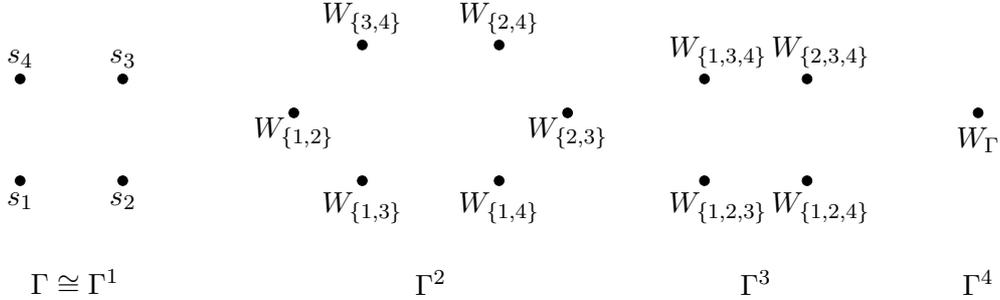
\begin{figure}[h]
	\begin{center}
	\captionsetup{justification=centering}
		\begin{tikzpicture}[scale=0.9]
			\draw[fill=black]  (0,0) circle (2pt);
			\draw[fill=black]  (1.5,0) circle (2pt);
            \draw[fill=black]  (1.5,1.5) circle (2pt);
			\draw[fill=black]  (0,1.5) circle (2pt);
            \node at (0,-0.3) {$s_1$};
            \node at (1.5,-0.3) {$s_2$};
            \node at (0,1.8) {$s_4$};
            \node at (1.5,1.8) {$s_3$};
            \node at (0.8,-1.5) {$\Gamma\cong\Gamma^1$};

            \draw[fill=black]  (4,1) circle (2pt);
            \node at (4,0.7) {$W_{\{1,2\}}$};
			\draw[fill=black]  (5,0) circle (2pt);
            \node at (5,-0.4) {$W_{\{1,3\}}$};
            \draw[fill=black]  (7,0) circle (2pt);
            \node at (7,-0.4) {$W_{\{1,4\}}$};
			\draw[fill=black]  (8,1) circle (2pt);
            \node at (8,0.7) {$W_{\{2,3\}}$};
            \draw[fill=black]  (7,2) circle (2pt);
            \node at (7,2.4) {$W_{\{2,4\}}$};
			\draw[fill=black]  (5,2) circle (2pt);
            \node at (5,2.4) {$W_{\{3,4\}}$};
            \node at (6,-1.5) {$\Gamma^2$};

            \draw[fill=black]  (10,0) circle (2pt);
            \node at (10.2,-0.4) {$W_{\{1,2,3\}}$};
			\draw[fill=black]  (11.5,0) circle (2pt);
            \node at (11.7,-0.4) {$W_{\{1,2,4\}}$};
            \draw[fill=black]  (11.5,1.5) circle (2pt);
            \node at (10.2,1.9) {$W_{\{1,3,4\}}$};
			\draw[fill=black]  (10,1.5) circle (2pt);
            \node at (11.7,1.9) {$W_{\{2,3,4\}}$};
            \node at (10.75,-1.5) {$\Gamma^3$};

            \draw[fill=black]  (14,1) circle (2pt);
            \node at (14,0.6) {$W_\Gamma$};
            \node at (14,-1.5) {$\Gamma^4$};
            
            \end{tikzpicture}
    \end{center}
    \caption{Disconnected diagram $\Gamma$ with $4$ vertices and its $k$-odd graphs.}
    \label{fig:DisconnectedDiagram}
\end{figure} 

\begin{example}[Type $\tA$]
\label{ex:A4}
    Consider the finite Coxeter group of type $\tA_4$ with Coxeter diagram as shown in Figure~\ref{fig:graphsA4}. Since all edges in $\tA_4$ are unlabeled, $\Gamma^1$ is isomorphic to $\tA_4$. The second odd graph, $\Gamma^2$, is also displayed in Figure~\ref{fig:graphsA4}. The remaining odd graphs $\Gamma^k$ are empty for $k\geq 3$. 
\end{example}

\begin{figure}[h]
	\begin{center}
	\captionsetup{justification=centering}
		\begin{tikzpicture}
			\draw[fill=black]  (0,0) circle (2pt);
			\draw[fill=black]  (1,0) circle (2pt);
            \draw[fill=black]  (2,0) circle (2pt);
            \draw[fill=black] (3,0) circle (2pt);
            \node at (0,-0.3) {$s_1$};
            \node at (1,-0.3) {$s_2$};
            \node at (2,-0.3) {$s_3$};
            \node at (3,-0.3) {$s_4$};
            \draw (0,0)--(3,0);
			\node at (1.5,-1) {$\tA_{4}\cong\Gamma^1$};

            \draw[fill=black]  (5,0) circle (2pt);
			\draw[fill=black]  (7,0) circle (2pt);
            \draw[fill=black]  (9,0) circle (2pt);           
            \node at (5,-0.3) {$W_{\{1,3\}}$};
            \node at (7,-0.3) {$W_{\{1,4\}}$};
            \node at (9,-0.3) {$W_{\{2,4\}}$};
            \draw (5, 0)--(9, 0);
			\node at (7,-1) {$\Gamma^2$};
        \end{tikzpicture}
    \end{center}
    \caption{Pictured are the two nonempty odd graphs in type $\tA_4$. 
}
    \label{fig:graphsA4}
\end{figure}
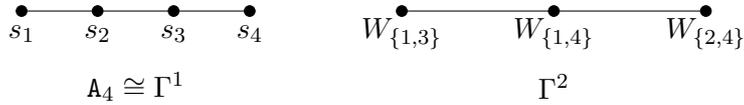

\begin{remark}
    Although we have introduced higher rank odd graphs $\Gamma^k$ as graphs themselves, derived from a Coxeter diagram $\Gamma$, it turns out that $(V(\Gamma),V(\Gamma^k))$ can be interpreted a $k$-uniform hypergraph. In this set-up, connected components in each $\Gamma^k$ correspond to certain subgraphs of $(k-1)$-line graphs. We refer the reader to \cite{BrettoBook} for more on hypergraph theory, and to \cite{BermondHeydemannSotteau} for an introduction to $s$-line graphs.
\end{remark}

In the sequel we offer some observations on how to quickly derive edges of $\Gamma^k$ under certain conditions. The first criterion is general, while the following ones work for diagrams containing certain `line subdiagrams'. 
We first fix our notation on neighbourhoods in graphs.

\begin{definition}[Graph neighbourhoods] \label{def:neighbourhoods}
Let $\Gamma = (S,E)$ be a graph. 
We define the \emph{neighbourhood} $N_{\Gamma}(K)$ of a subset of vertices $K \subseteq S$ in $\Gamma$ as the subgraph of $\Gamma$ induced by 
the vertex set $\left\{s_i\in S\setminus K\mid \text{ there exists }s_k\in K\text{ such that }\left\{s_i, s_k\right\}\in E\right\}$.

In particular, for $K = \lbrace s_i, s_j \rbrace$, the neighbourhood $N_\Gamma(K)$ is the full subgraph of $\Gamma$ with the vertex set $(\lk(s_i)\cup \lk(s_j)) \setminus \left\{s_i, s_j\right\}$ and $\Gamma\setminus N_\Gamma(K)$ is the full subgraph with the vertex set $(S\setminus \lk(s_i)\cup \lk(s_j))\cup\left\{s_i,s_j\right\}$.
\end{definition}

\begin{figure}[h]
	\begin{center}
	\captionsetup{justification=centering}
		\begin{tikzpicture}

            \draw (-1,-4.5)--(6,-4.5);
            \draw (1,-3.5)--(1,-4.5);
            \draw[fill=black] (-1,-4.5) circle (2pt);
            \draw[fill=black] (0,-4.5) circle (2pt); 
            \draw[fill=black] (1,-4.5) circle (2pt);
            \draw[fill=black] (3,-4.5) circle (2pt);
            \draw[fill=black] (4,-4.5) circle (2pt);
            \draw[fill=black] (5,-4.5) circle (2pt);
            \draw[fill=black] (6,-4.5) circle (2pt);
            \draw[fill=black] (2,-4.5) circle (2pt);
            \draw[fill=black] (1,-3.5) circle (2pt);
            \node at (-1,-4.8) {$s_1$};
            \node at (0,-4.8) {$s_2$};
            \node at (1,-4.8) {$s_3$};
            \node at (2,-4.8) {$s_4$};
            \node at (3,-4.8) {$s_5$};
            \node at (4,-4.8) {$s_6$};
            \node at (5,-4.8) {$s_7$};
            \node at (6,-4.8) {$s_8$};
            \node at (1,-3.2) {$s_9$};	

            \draw[fill=black] (8,-4.5) circle (2pt);
            \node at (8,-4.8) {$s_1$};
            \draw[fill=black] (9,-4.5) circle (2pt);
            \draw[fill=black] (10,-4.5) circle (2pt);
            \draw[fill=black] (11,-4.5) circle (2pt);
            \draw[fill=black] (12,-4.5) circle (2pt);
            \draw[fill=black] (13,-4.5) circle (2pt);
            \draw (9,-4.5)--(10, -4.5);
            \node at (9,-4.8) {$s_3$};
            \node at (10,-4.8) {$s_4$};
            \node at (11,-4.8) {$s_6$};
            \node at (12,-4.8) {$s_7$};
            \node at (13,-4.8) {$s_8$};
            \draw (11,-4.5)--(13, -4.5);
		\end{tikzpicture}
	\caption{Type $\atE_8$ and $\atE_8\setminus N_{\atE_8}(s_3,s_4)$.}
	\end{center}
\end{figure}

For a given odd-edge $\left\{s_i, s_j\right\}\in E(\Gamma)$ the next lemma describes precisely which standard parabolic subgroups whose irreducible components are of types in \Cref{fig:Tabelle} are odd-adjacent  via this edge.

\begin{lemma}[Edges in $\Gamma^m$]
    \label{lemma:edges_Gamma_n}
    Let $\Gamma = (S,E)$ be a Coxeter diagram
    and $\lbrace s_i,s_j \rbrace \in E$ be an odd-edge, and let 
    $\til{\Gamma} = \Gamma \setminus N_\Gamma (s_i,s_j)$.  
    If $m\geq 2$,   
    then $\lbrace \langle S_K, s_i \rangle , \langle S_K, s_j \rangle \rbrace \in E(\Gamma^m)$ if and only if $S_K\subseteq V(\til{\Gamma})\setminus\lbrace s_i,s_j\rbrace$ with cardinality $|S_K|=m-1$ and such that the connected components of the subgraph induced by $S_K$ are of types listed in \Cref{fig:Tabelle}. 
  In particular, $\langle S_K, s_i \rangle$ and $\langle S_K, s_j \rangle$ are standard parabolic subgroups of $W_\Gamma$ of rank $m$ with irreducible components of types in \Cref{fig:Tabelle}.
\end{lemma}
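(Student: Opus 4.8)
The plan is to prove both implications directly from the definition of odd-adjacency (\cref{def:odd-adjacent}) and of the $m$-odd graph (\cref{def:odd-graphs}), the crux being a careful bookkeeping of which generators commute with which. Throughout I write $W_J = \langle S_K, s_i\rangle$ with $S_J = S_K \cup \lbrace s_i\rbrace$ and $W_{J'} = \langle S_K, s_j\rangle$ with $S_{J'} = S_K \cup \lbrace s_j\rbrace$ for the two candidate vertices. Since they share the common part $S_K$ and differ only in $s_i$ versus $s_j$, any odd-adjacency witness $(S_L, S_M, S_X)$ with parameter $\ell$ for them must satisfy $S_L \triangle S_M = S_J \triangle S_{J'} = \lbrace s_i, s_j\rbrace$, which forces $s_i \in S_L \setminus S_M$, $s_j \in S_M \setminus S_L$ and $S_L \setminus \lbrace s_i\rbrace = S_M \setminus \lbrace s_j\rbrace \subseteq S_K$. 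One also reads off $s_i, s_j \notin S_K$ (otherwise $s_i$ would lie in both $S_L$ and $S_X$, contradicting the disjointness $S_J = S_L \sqcup S_X$), and in particular the odd edge of item~(4) is exactly $\lbrace s_i, s_j\rbrace$, which is available by hypothesis.

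For the backward implication, assume $S_K \subseteq V(\til{\Gamma})\setminus\lbrace s_i, s_j\rbrace$ --- equivalently, no vertex of $S_K$ lies in $\lk(s_i) \cup \lk(s_j)$ --- together with $|S_K| = m-1$ and all components of the subgraph induced by $S_K$ of types in \Cref{fig:Tabelle}. Then $s_i$ and $s_j$ each commute with every generator of $S_K$, so $W_J = \langle s_i\rangle \times \langle S_K\rangle$ and $W_{J'} = \langle s_j\rangle \times \langle S_K\rangle$, and the subgraph induced by $S_K \cup \lbrace s_i\rbrace$ is the subgraph on $S_K$ together with one extra isolated vertex (similarly for $s_j$). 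Since $\tA_1$ appears in \Cref{fig:Tabelle}, both $W_J$ and $W_{J'}$ are vertices of $\Gamma^m$ of rank $m$. Taking $S_L = \lbrace s_i\rbrace$, $S_M = \lbrace s_j\rbrace$, $S_X = S_K$ and $\ell = 1$ then verifies all four conditions of \cref{def:odd-adjacent} (the edge $\lbrace s_i, s_j\rbrace$ being odd by assumption), so the two parabolics are odd-adjacent and span an edge of $\Gamma^m$.

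For the forward implication, suppose $\lbrace \langle S_K, s_i\rangle, \langle S_K, s_j\rangle\rbrace \in E(\Gamma^m)$. Being vertices of $\Gamma^m$, both have rank $m$, which together with $s_i, s_j \notin S_K$ yields $|S_K| = m-1$. The edge supplies an odd-adjacency witness $(S_L, S_M, S_X)$ as normalized above. The main step --- which I expect to be the principal, if routine, obstacle, precisely because the definition permits $\ell > 1$ rather than only the naive witness with $\ell = 1$ --- is to deduce the commuting condition for arbitrary $\ell$. Since $\langle S_L\rangle \cong (\Z/2\Z)^\ell$, the subgraph induced by $S_L$ is edgeless, so $s_i$ is nonadjacent to $S_L \setminus \lbrace s_i\rbrace$; and since $W_J = \langle S_L\rangle \times \langle S_X\rangle$, the vertex $s_i$ is nonadjacent to $S_X = S_K \setminus (S_L \setminus \lbrace s_i\rbrace)$. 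As $S_L \setminus \lbrace s_i\rbrace \subseteq S_K$, these two facts together force $s_i$ to be nonadjacent to every vertex of $S_K$; the symmetric argument applied to $S_M$ and $W_{J'} = \langle S_M\rangle \times \langle S_X\rangle$ does the same for $s_j$. Hence no vertex of $S_K$ lies in $\lk(s_i) \cup \lk(s_j)$, i.e.\ $S_K \subseteq V(\til{\Gamma})\setminus\lbrace s_i, s_j\rbrace$.

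It remains to control the type of the components of $S_K$ and to record the ``in particular'' clause. Because $W_J$ is a vertex of $\Gamma^m$, all components of the subgraph induced by $S_K \cup \lbrace s_i\rbrace$ are of types in \Cref{fig:Tabelle}; by the previous paragraph $s_i$ is an isolated vertex there, so deleting it (an $\tA_1$-component) leaves exactly the components of the subgraph induced by $S_K$, which are therefore again of types in \Cref{fig:Tabelle}. Finally, the ``in particular'' statement is immediate from what has been shown: in either direction $W_J = \langle S_K, s_i\rangle$ and $W_{J'} = \langle S_K, s_j\rangle$ are standard parabolic subgroups of rank $|S_K| + 1 = m$ whose irreducible components belong to \Cref{fig:Tabelle}.
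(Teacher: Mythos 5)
Your proof is correct and takes essentially the same route as the paper's: the backward direction uses the explicit witness $S_L=\{s_i\}$, $S_M=\{s_j\}$, $S_X=S_K$, $\ell=1$, and the forward direction extracts non-adjacency of $s_i,s_j$ to $S_K$ from the abelianness of $W_L, W_M$ together with the direct-product splitting off of $W_X$, exactly as in the paper. If anything, you are slightly more complete, since you explicitly verify that deleting the isolated vertex $s_i$ shows the components of $S_K$ are again of the types in Figure 2, a point the paper's forward direction leaves implicit.
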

\begin{proof}
    Let first  $\lbrace \langle S_K, s_i \rangle , \langle S_K, s_j \rangle \rbrace \in E(\Gamma^m)$. 
    Then the vertices $\langle S_K, s_i \rangle$ and $\langle S_K, s_j \rangle$ of $\Gamma^m$ are standard parabolic subgroups of rank $m$ with irreducible components of types in \Cref{fig:Tabelle}. 
    In particular, $\left\vert S_K \cup \lbrace s_i \rbrace\right\vert = m$ implies $\left\vert S_K \right\vert = m-1$. 
    Since the two vertices form an edge in $\Gamma^m$, the parabolic subgroups $ \langle S_K, s_i \rangle$ and $\langle S_K, s_j \rangle$ are odd-adjacent.  And it follows from \cref{def:odd-adjacent} that $\langle S_K, s_i \rangle = W_L \times W_X$, and that $\langle S_K, s_j \rangle = W_M \times W_X$, 
    for some index sets $L, X \subseteq K \cup \lbrace i \rbrace$, and  $M,X \subseteq K \cup \lbrace j \rbrace$. In addition, $W_L \cong W_M \cong (\mathbb{Z}/2\mathbb{Z})^\ell$ for some $\ell \geq 1$.
    Moreover,
    $\left\vert L \setminus M \right\vert = \left\vert M \setminus L \right\vert =1$ with $\lbrace i \rbrace = L \setminus M$, $\lbrace j \rbrace = M \setminus L$ and the edge $\lbrace s_i, s_j \rbrace \in E(\Gamma)$ is odd.
    Note that the standard parabolic subgroups  $\langle s_i \rangle$, $\langle s_j\rangle$ commute with  $W_L, W_M$ and $W_X$.
    This implies that the intersection of $V(N_\Gamma(s_i, s_j))$ with $S_L\cup S_M\cup S_X$ is empty.
    Hence  $S_K \subseteq V(\til{\Gamma})\setminus\lbrace s_i, s_j\rbrace$. 
    
    Conversely, let $S_K\subseteq V(\til{\Gamma})\setminus\lbrace s_i, s_j\rbrace$, of cardinality $|S_K|=m-1$ such that the connected components of the subgraph induced by $S_K$ are of types in \Cref{fig:Tabelle}.
    Then the standard parabolic subgroups $\langle S_K, s_i \rangle$ and $\langle S_K, s_j \rangle$ are of rank $m$. Further, the irreducible components of $\langle S_K, s_i \rangle$ and of  $\langle S_K, s_j \rangle$ are of types listed in \Cref{fig:Tabelle} since $s_i$ and $s_j$ are not adjacent to any vertex in $S_K$ and therefore $\langle S_K, s_i\rangle\cong W_K\times \langle s_i\rangle$ resp. $\langle S_K, s_j\rangle\cong W_K\times\langle s_j\rangle$.
    By assumption the edge $\lbrace s_i,s_j \rbrace \in E(\Gamma)$ is odd, which implies $\lbrace \langle S_K, s_i \rangle , \langle S_K, s_j \rangle \rbrace \in E(\Gamma^m)$.
\end{proof}

The next tools concern line graphs that avoid subdiagrams of type $\tH$. These graphs are particularly useful as they show up as subdiagrams in most classical families of spherical and affine Coxeter diagrams.
We start with the \emph{Klapperschlangen}-Lemma. 

\begin{lemma}[Edge paths in $\Gamma^k$ from odd sublines] \label{lem:foldinglines}
Let $\Gamma$ be a Coxeter diagram and let $W_J,W_L \in V(\Gamma^k)$ be distinct vertices in the $k$-odd graph $\Gamma^k$ for some $k \geq 1$. Suppose there exists a full subgraph $\Lambda \subseteq \Gamma$ satisfying all of the following conditions: 
\begin{enumerate}
    \item The full subgraphs $\Gamma_J$ and $\Gamma_L$ corresponding to the given standard parabolics $W_J$ and $W_L$ are both contained in $\Lambda$;
    \item $\Lambda$ is a \emph{path graph} (i.e., a tree with two leaves) such that all of its edges are odd or unlabelled;
    \item $\Lambda$ contains no subdiagram of type $\tH_3$. 
\end{enumerate}
Then there exists an edge path connecting $W_J$ to $W_L$ in the $k$-odd graph $\Gamma^k$. 
\end{lemma}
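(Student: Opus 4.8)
The plan is to use conditions (2) and (3) to reduce the statement to an elementary \emph{token-sliding} (the \emph{Klapperschlange} or snake) argument on the path $\Lambda$, and then carry out that reconfiguration explicitly.

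First I would pin down which vertices of $\Gamma^k$ can sit inside $\Lambda$. A connected full subgraph of a path on at least two vertices is a contiguous subpath, and by (2) it carries only odd (or unlabelled) edge-labels. I claim no such subpath is of a type listed in \Cref{fig:Tabelle}: the branched types $\tD_{2n}$, $\tE_7$, $\tE_8$ have a vertex of degree three and hence cannot embed into a path; the path-types $\tB_n$, $\tF$, $\tG=\tI(6)$ and $\tI(2m)$ each carry an even edge-label, which $\Lambda$ does not have by (2); the only path-types in \Cref{fig:Tabelle} with purely odd labels are $\tH_3$ and $\tH_4$, both excluded by (3) (note $\tH_4$ contains $\tH_3$); and $\tA_n$-paths with $n\geq 2$ together with the odd dihedral diagrams do not appear in \Cref{fig:Tabelle} at all. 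Consequently, for any $W_{J'}\in V(\Gamma^k)$ with $\Gamma_{J'}\subseteq\Lambda$, every connected component of the subgraph induced by $S_{J'}$ is a single vertex, i.e. of type $\tA_1$; equivalently $S_{J'}$ is an independent set of $\Lambda$ of size $k$ and $W_{J'}\cong(\Z/2\Z)^k$. In particular this applies to the two given vertices $W_J$ and $W_L$.

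Next I would identify the relevant edges of $\Gamma^k$. For an independent $k$-set $A=S_X\cup\{s_l\}$, suppose $s_m$ is adjacent to $s_l$ in $\Lambda$ and $B:=S_X\cup\{s_m\}$ is again independent (so $s_m$ is adjacent to no vertex of $S_X$). Taking $\ell=1$, $S_L=\{s_l\}$, $S_M=\{s_m\}$ and common part $S_X$ in \Cref{def:odd-adjacent}, one gets $W_A=\langle s_l\rangle\times W_X$ and $W_B=\langle s_m\rangle\times W_X$ with $\langle s_l\rangle\cong\langle s_m\rangle\cong(\Z/2\Z)^1$ and differing edge $\{s_l,s_m\}$ odd (all edges of $\Lambda$ being odd). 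Hence $W_A$ and $W_B$ are odd-adjacent, so sliding a single token along an edge of $\Lambda$ while preserving independence is precisely an edge of $\Gamma^k$. It then suffices to show that any two independent $k$-subsets of $\Lambda$ are joined by a chain of such slides; applied to $S_J$ and $S_L$ and concatenated (reversing one half), this yields the required edge path. I would prove the reconfiguration by driving every configuration to the canonical leftmost set $\{v_1,v_3,\ldots,v_{2k-1}\}$, where $v_1,\ldots,v_N$ list the vertices of $\Lambda$ in path order. Processing tokens left to right, once tokens $1,\ldots,i-1$ occupy positions $1,3,\ldots,2i-3$, the $i$-th token still sits at its original position $a_i\geq 2i-1$ and can be slid one step at a time down to $2i-1$; at each step independence with the fixed left neighbour at $2i-3$ keeps the position $\geq 2i-1$, while independence with the untouched tokens on the right is automatic since the token only moves leftwards, away from them.

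The main technical obstacle is this last reconfiguration: token-sliding can get \emph{stuck} on general graphs, so the argument genuinely uses both that $\Lambda$ is a path and that the hypotheses reduced us to independent sets (ruling out rigid $\tH$-type beads that would not slide like a single $\tA_1$-vertex). The only bookkeeping requiring care is that every intermediate configuration stays an independent set, hence a legitimate vertex of $\Gamma^k$; on a path the greedy leftward sweep makes this transparent, as verified by the inequalities above.
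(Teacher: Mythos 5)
Your proof is correct and takes essentially the same route as the paper's: both arguments use conditions (2)--(3) together with an inspection of \Cref{fig:Tabelle} to conclude that every vertex of $\Gamma^k$ lying in $\Lambda$ is an independent $k$-set (all irreducible components of type $\tA_1$), and both then realize edges of $\Gamma^k$ as single-token slides along odd edges of the path. The only difference is organizational bookkeeping in the reconfiguration step --- the paper connects $W_J$ to $W_L$ directly by induction on the number of differing vertices, sliding one token through the gap at a time, whereas you normalize both configurations to the canonical leftmost independent set $\{v_1,v_3,\ldots,v_{2k-1}\}$ and concatenate; these are equivalent arguments.
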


\begin{proof}
The intuitive idea of the proof is to `play' with the path graph $\Lambda$ as a `Klapperschlange' toy\footnote{From the German `folding snake', a literal translation of the common name for a rattle snake; `Klapperschlangen' toys (or `Zauberklapperschlangen') have wooden bits of same size that can be folded.}, folding it step-by-step to connect the given parabolics. 

As $\Lambda$ is a path graph by Condition~(2), we may linearly order its vertices as follows: 
\[V(\Lambda) = \{ {a_1}, \ldots, {a_m} \},\] 
where $a_1$ and $a_m$ are the endpoints (leaves) of $\Lambda$ and $a_{i+1}$ is the obvious successor of $a_i$ in this linear ordering. Note that any edges in $\Gamma$ connecting any two $a_i, a_j \in V(\Lambda)$ must already be contained in $\Lambda$ since $\Lambda$ is assumed to be a full subgraph of $\Gamma$. By definition, $\Gamma_J$ and $\Gamma_L$ are also full subgraphs of $\Gamma$, which in turn are contained, by Condition~(1), in the full subgraph $\Lambda$. By assumption $\Gamma_J \neq \Gamma_L$ and hence $|V(\Lambda)| = m > k = |V(\Gamma_J)| = |V(\Gamma_J)|$. By Conditions~(2) and~(3) and after inspecting \Cref{fig:Tabelle}, we conclude that the irreducible components of $\Gamma_J$ and $\Gamma_L$ are all of type $\tA_1$. That is, $\Gamma_J$ and $\Gamma_L$ consist, respectively, of $k$ vertices which are pairwise not connected by any edges from $\Gamma$. 
We may thus write 
\begin{align*}
V(\Gamma_J) &= \{a_{j_1}, \ldots, a_{j_k} \mid j_1 < j_2 < \ldots < j_k \text{ and } |j_f - j_h | \geq 2 \text{ for } f,h \in \{1,\ldots,k\}\}, \\
V(\Gamma_L) &= \{a_{\ell_1}, \ldots, a_{\ell_k} \mid \ell_1 < \ell_2 < \ldots < \ell_k \text{ and } |\ell_f - \ell_h | \geq 2 \text{ for } f,h \in \{1,\ldots,k\}\}.
\end{align*} 

Now suppose $\Gamma_J$ and $\Gamma_L$ only differ by one vertex, say $a_{j_\phee} \in V(\Gamma_J) \setminus V(\Gamma_L)$ and $a_{\ell_\lambda} \in V(\Gamma_L) \setminus V(\Gamma_J)$ for some $\phee,\lambda \in \{1,\ldots,k\}$ with $\phee\neq \lambda$. Without loss of generality, assume $j_{\phee} < \ell_{\lambda} = j_{\phee} + \eta$ with $\eta \geq 1$. In case $\eta = 1$, it is clear that $W_J$ and $W_L$ are odd-adjacent and thus connected in $\Gamma^k$ by an edge. So assume $\eta > 1$, in which case $|V(\Lambda)| = m > k + 1$. Looking at the intersection $V(\Gamma_J) \cap V(\Gamma_L)$ we see that, for every other vertex $a_{j_h} \in V(\Gamma_J) \setminus \{a_{j_\phee}\}$ and $a_{\ell_f} \in V(\Gamma_L) \setminus \{a_{\ell_\lambda}\}$, it holds 
\[|j_h-\ell_\lambda| \geq 2 \leq |\ell_f - j_\phee|.\]
This means that none of the vertices $a_{j_{\phee}+1}$, $a_{j_{\phee}+2}$, $\ldots$, $a_{j_{\phee}+\eta-1} = a_{\ell_\lambda-1}$ of the path graph $\Lambda$ lying between $a_{j_\phee}$ and $a_{\ell_\lambda}$ is contained in $V(\Gamma_J) \cup V(\Gamma_L)$. Moreover, since every other $a_{j_h}$ (resp. $a_{\ell_f}$) of $V(\Gamma_J) \cap V(\Gamma_L)$ lies before $a_{j_\phee}$ or after $a_{\ell_\lambda}$, it still holds true that 
\[|j_h-(\ell_\lambda+i)| \geq 2 \leq |\ell_f - (j_\phee+i)| \quad \text{ for all } \quad i \in \{1,\ldots,\eta-1\}\]
and all $h,f \in \{1,\ldots,k\}$ with $h\neq \phee$ and $f \neq \lambda$. Hence, defining for each $i \in \{1,\ldots,\eta-1\}$ the vertex set 
$V_i := \{a_{j_\phee+i}\} \cup V(\Gamma_J) \setminus \{a_{j_\phee}\} \subseteq \Lambda$,
the parabolic subgroup $W_i := \spans{V_i}$ corresponds to a vertex of $\Gamma^k$ and is odd-adjacent to $W_{i+1}$, additionally with $W_1$ being odd-adjacent to $W_J$ and $W_{\eta-1}$ being odd-adjacent to $W_L$. Hence $W_J$, $W_1$, $\dots$, $W_{\eta-1}$, $W_L$ yields an edge path in $\Gamma^k$ connecting $W_J$ to $W_L$. 

In the general case, an inductive argument finishes the proof. Indeed, if $\Gamma_J$ and $\Gamma_L$ differ by more than one vertex (hence $|V(\Lambda)| = m \geq k+2$), we can iterate the procedure above to first construct an edge path in $\Gamma^k$ from $W_J$ to another vertex $W_{J'} \in V(\Gamma^k)$ with $V(\Gamma_{J'}) \subseteq V(\Lambda)$ such that $\Gamma_{J'}$ and $\Gamma_L$ differ by one vertex less than $\Gamma_J$ and $\Gamma_L$. The lemma follows. 
\end{proof}

The (sub)diagrams $\Lambda$ appearing in \Cref{lem:foldinglines} are depicted in \Cref{fig:path}. A typical example of such a graph is the spherical Coxeter diagram of type $\tA_n$. The arguments used in the proof of \Cref{lem:foldinglines} also point out to the following observation. 

\begin{figure}[htb]
	\begin{center}
	\captionsetup{justification=centering}
		\begin{tikzpicture}[scale=1, transform shape]
            \draw[fill=black] (0,-4.5) circle (2pt);
            \draw[fill=black] (1,-4.5) circle (2pt);
            \draw (0,-4.5)--(1, -4.5);
            \draw[fill=black] (2, -4.5) circle (2pt);
            \draw (1, -4.5)--(2,-4.5);
            \draw[dashed] (2,-4.5)--(3,-4.5);
            \draw[fill=black] (3,-4.5) circle (2pt);
            \draw[fill=black] (4.25,-4.5) circle (2pt);
            \draw (3,-4.5)--(4.25,-4.5);
            \node at (0.5, -4.3) {$m_{1,2}$};
            \node at (1.5, -4.3) {$m_{2,3}$};
            \node at (3.6, -4.3) {$m_{n-1,n}$};
            \node at (0, -4.8) {$s_1$};
            \node at (1, -4.8) {$s_2$};
            \node at (2, -4.8) {$s_3$};
            \node at (3.05, -4.8) {$s_{n-1}$};
            \node at (4.25, -4.8) {$s_n$};

\end{tikzpicture}
	\caption{Odd-labeled line graph without $\tH$-subdiagrams; if $m_{i,i+1}=3$, the label is understood as omitted.}
     \label{fig:path}
	\end{center}
\end{figure}
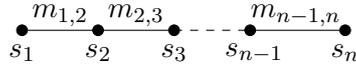  

\begin{proposition}[Vertices and connectivity in odd lines]
\label{lemma:VerticesInTypeA_path}
Let $\Lambda$ be a Coxeter line diagram as in \Cref{fig:path}, with edges unlabeled or odd, with $n = |V(\Lambda)| \geq 2$, and containing no subdiagram isomorphic to $\tH_3$. Then $\Lambda^1 \cong \Lambda$ (as unlabeled, simplicial graphs) and the vertex set of $\Lambda^k$ for $k \geq 2$ is given as follows:
    \begin{align*}
        V(\Lambda^k) = \left\lbrace \langle s_{j_1}, \dots, s_{j_k}\rangle \vert \ j_1 < j_2 < \dots < j_k,\  \left|  j_f - j_h\right| \geq 2,\ f,h \in \lbrace 1, \dots, k \rbrace \right\rbrace, 
    \end{align*}
in case $2 \leq k \leq \lceil \frac{n}{2} \rceil$; 
otherwise $V(\Gamma^k) = \leer$. Moreover, for $2 \leq k < n$, the odd graph $\Lambda^k$ consists of a single vertex if and only if $n=2\ell+1$ and $k=\ell+1$; otherwise, if $\Lambda^k$ has at least two vertices, then it is always connected. 
\end{proposition}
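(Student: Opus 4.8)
The plan hinges on a single structural observation about $\Lambda$. Since $\Lambda$ is a line diagram whose edges are all odd or unlabelled and which contains no $\tH_3$ subdiagram, I would first argue that the only type from \Cref{fig:Tabelle} that can occur as a connected component of any full subgraph of $\Lambda$ is $\tA_1$. Indeed, the branching types $\tD_{2n}$, $\tE_7$, $\tE_8$ require a vertex of degree three and cannot embed into a path; the types $\tB_n$, $\tF$, $\tG \cong \tI(6)$ and $\tI(2m)$ all carry an even edge label, which is forbidden; and $\tH_3$ (hence also $\tH_4$, which contains it) is excluded by hypothesis. Consequently, by \Cref{def:odd-graphs}, a standard parabolic $W_J$ is a vertex of $\Lambda^k$ precisely when $|J| = k$ and the induced subgraph $\Lambda_J$ is edgeless, i.e.\ the chosen indices are pairwise at distance at least two in the line. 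This at once yields the stated description of $V(\Lambda^k)$. For $k = 1$ it gives a bijection $s_i \mapsto \langle s_i\rangle$ between $V(\Lambda)$ and $V(\Lambda^1)$; since by \Cref{def:odd-adjacent} two singletons are odd-adjacent exactly when they span an odd or unlabelled edge, and every edge of $\Lambda$ is of this kind, this bijection identifies the edge sets as well, proving $\Lambda^1 \cong \Lambda$ as unlabelled simplicial graphs.

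Next I would count. The vertices of $\Lambda^k$ are in bijection with the size-$k$ subsets of $\{1,\dots,n\}$ containing no two consecutive integers. The maximum size of such a subset is the independence number of the path, namely $\lceil n/2\rceil$ (realised by $\{1,3,5,\dots\}$), so $V(\Lambda^k) = \leer$ as soon as $k > \lceil n/2\rceil$ and is nonempty otherwise; this settles the emptiness dichotomy. To handle the single-vertex case, a stars-and-bars argument is cleanest: recording a valid subset by the numbers of unchosen positions before the first, between consecutive, and after the last chosen vertex, the $k-1$ mandatory separators leave $n-2k+1$ free unchosen vertices to be distributed among $k+1$ slots, giving $\binom{n-k+1}{k}$ subsets in total. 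This number equals $1$ exactly when there is no freedom, i.e.\ when $n-2k+1 = 0$; together with $k \geq 2$ this forces $n = 2k-1$, equivalently $n = 2\ell+1$ and $k = \ell+1$. In every other case with $k \leq \lceil n/2\rceil$ one has $n - 2k + 1 > 0$ and hence at least two vertices.

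For the connectivity claim I would invoke the \emph{Klapperschlangen}-Lemma (\Cref{lem:foldinglines}) directly, taking the ambient path $\Lambda$ itself as the auxiliary subgraph. By hypothesis $\Lambda$ is a path graph with only odd or unlabelled edges and no $\tH_3$ subdiagram, so it satisfies Conditions~(2) and~(3) of that lemma; and for any two vertices $W_J, W_L \in V(\Lambda^k)$ the full subgraphs $\Lambda_J$ and $\Lambda_L$ are contained in $\Lambda$, giving Condition~(1). The lemma then produces an edge path from $W_J$ to $W_L$ in $\Lambda^k$, so whenever $|V(\Lambda^k)| \geq 2$ the graph $\Lambda^k$ is connected. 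I expect the only real care to be needed in the combinatorial bookkeeping of the second paragraph --- pinning down exactly when the count drops to one --- since the structural reduction and the appeal to \Cref{lem:foldinglines} are otherwise routine once the $\tA_1$-only observation is in place.
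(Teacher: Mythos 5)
Your proposal is correct and follows essentially the same route as the paper's proof: reduce to the observation that only $\tA_1$ components can occur (ruling out branching types, even labels, and $\tH$ by hypothesis), read off $V(\Lambda^k)$ as independent sets in the path, and invoke \Cref{lem:foldinglines} with $\Lambda$ itself as the auxiliary subgraph for connectivity. Your stars-and-bars count $\binom{n-k+1}{k}$ is a slightly more explicit way of settling the emptiness and single-vertex cases than the paper's pigeonhole remark, and it cleanly gives both directions of the ``single vertex iff $n=2\ell+1$, $k=\ell+1$'' equivalence, but it is the same underlying argument.
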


\begin{proof}
    By definition the only (relevant) subdiagrams of types listed in \Cref{fig:Tabelle} that are contained in such a $\Lambda$ are of type $\tA_1$. This yields the description of the vertices of $\Lambda^k$. In particular, pigeonholing vertices of $\Lambda$ to obtain potential vertices of $\Lambda^k$, one readily checks that $\Lambda^k$ is empty for $k > \lceil \frac{n}{2} \rceil$. 
By the same principle, the only vertex of $\Lambda^{\ell+1}$ in case $n=2\ell+1$ corresponds precisely to the parabolic subgroup spanned by all vertices of $\Lambda$ indexed by an odd number. The remaining claim about connectivity of $\Lambda^k$ is a direct consequence of \Cref{lem:foldinglines}.
\end{proof}

After analyzing odd lines, one quickly deduces similar properties for higher odd graphs of `odd circles', depicted in \Cref{fig:Circle}. A typical example of such a circle is the affine Coxeter diagram of type $\atA_n$. In such a diagram we call $s_{i+1}$ with $i+1 \mod{n+1}$ the \emph{successor} of $s_i$, and similarly $s_{i-1}$ with $i-1 \mod {n+1}$ is the \emph{predecessor} of $s_i$.

    \begin{figure}[htb]
	\begin{center}
	\captionsetup{justification=centering}
		\begin{tikzpicture}[scale=1, transform shape]
            \draw[fill=black] (0,-1.5) circle (2pt);
            \draw[fill=black] (1,-1.5) circle (2pt);
            \draw (0,-1.5)--(1,-1.5);
            \draw[fill=black] (2,-1.5) circle (2pt);
            \draw[fill=black] (4.3,-1.5) circle (2pt);
            \draw[dashed] (1,-1.5)--(2, -1.5);
            \draw (2,-1.5)--(4.3,-1.5);
            \draw[fill=black] (1.5, -0.8) circle (2pt);
            \draw (0,-1.5)--(1.5, -0.8);
            \draw (4.3,-1.5)--(1.5, -0.8);
            \node at (-0.2, -1.85) {$s_0$};
            \node at (4.3, -1.85) {$s_{n-1}$};
            \node at (1.5, -0.5) {$s_n$};

            \node at (0.5, -1) {$m_{0,n}$};
            \node at (2.8, -0.95) {$m_{n,n-1}$};
            \node at (0.5, -1.75) {$m_{0,1}$};
            \node at (3, -1.75) {$m_{n-2,n-1}$};
    \end{tikzpicture}
	\caption{Odd-labeled circle without $\tH$-subdiagrams; if an edge label equals three, the label is understood as omitted.}
     \label{fig:Circle}
	\end{center}
\end{figure}
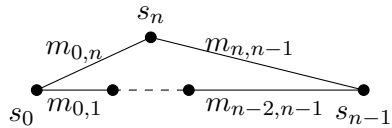

\begin{lemma}[Vertices in odd circles] 
\label{lemma:VerticesInTypeA}
     Fix  $n \geq 2$ and let $\Gamma$ be a Coxeter diagram as shown in \Cref{fig:Circle}. That is, $\Gamma$ is a circle on $n+1$ vertices $S= \lbrace s_i \;\vert \ i \in \lbrace 0, \dots, n \rbrace \rbrace$ whose edges are unlabeled or odd. Suppose further that $\Gamma$ contains no subdiagram of type $\tH_3$. 
    Then $\Gamma^1$ is isomorphic to $\Gamma$ (as unlabeled, simplicial graphs) and the vertices $V({\Gamma^k})$ of $\Gamma^k$, with $2 \leq k \leq n$, are of the form 
    \begin{align*}
        V({\Gamma^k}) = \left\{ \langle s_{j_1}, \dots, s_{j_k}\rangle \mid j_1 < j_2 < \dots < j_k, \, \left|  j_f - j_h\right| \geq 2 \, \mathrm{mod} \, {n+1}, f,h \in \lbrace 0, \dots, n \rbrace \right\}.
    \end{align*}
    In particular, $\Gamma^k$ is empty if and only if $k > \lceil \frac{n}{2}\rceil$. In case $k \leq \lceil \frac{n}{2} \rceil$, a vertex $\langle s_{j_1}, \dots, s_{j_k}\rangle \in V({\Gamma^k})$ is isolated if and only if 
    \begin{enumerate}
        \item $j_{m+1}= j_m + 2 \ \forall m \in \lbrace 1, \dots, k \rbrace$, and
        \item $j_k + 2 = j_1 \mod{n+1}$.
    \end{enumerate}
\end{lemma}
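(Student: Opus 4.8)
The plan is to analyze \cref{lemma:VerticesInTypeA} in two parts: first establishing the vertex set description and the emptiness criterion, and then characterizing isolated vertices. For the vertex set, I would argue exactly as in the line case (\cref{lemma:VerticesInTypeA_path}): since $\Gamma$ is an odd circle containing no $\tH_3$ subdiagram, inspection of \cref{fig:Tabelle} shows that the only induced subdiagrams of admissible type are disjoint unions of $\tA_1$'s. Hence a rank-$k$ vertex of $\Gamma^k$ is precisely a choice of $k$ pairwise non-adjacent vertices of the circle, which is exactly the condition $|j_f - j_h| \geq 2 \bmod (n+1)$. The emptiness criterion $k > \lceil \frac{n}{2}\rceil$ then follows by a pigeonhole count: on a cycle of length $n+1$, the maximum size of an independent set is $\lfloor \frac{n+1}{2}\rfloor = \lceil \frac{n}{2}\rceil$, so no independent $k$-set exists beyond this bound, and conversely such sets exist up to it.

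For the heart of the statement, the characterization of isolated vertices, I would work directly with \cref{def:odd-adjacent} and \cref{lemma:edges_Gamma_n}. A vertex $W_J = \langle s_{j_1},\dots,s_{j_k}\rangle \in V(\Gamma^k)$ is \emph{non-isolated} precisely when it is odd-adjacent to some other vertex, i.e. when one of its generators $s_{j_m}$ can be swapped for a neighbouring odd-adjacent vertex $s$ while keeping the resulting set independent and admissible. Since all edges of $\Gamma$ are odd, odd-adjacency via an edge $\{s_{j_m}, s\}$ requires only that $s$ is a neighbour of $s_{j_m}$ on the circle (its predecessor or successor) and that replacing $s_{j_m}$ by $s$ still yields an independent set. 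The plan is to show this replacement is \emph{blocked} for every generator exactly under conditions (1) and (2). Concretely, for a chosen $s_{j_m}$, its successor $s_{j_m+1}$ is available for a swap unless $s_{j_m+1}$ is itself adjacent to another chosen generator, which (given independence) happens precisely when the next chosen generator $s_{j_{m+1}}$ equals $s_{j_m+2}$; symmetrically, the predecessor swap is blocked exactly when $s_{j_{m-1}} = s_{j_m - 2}$.

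I would then assemble these local obstructions. If every consecutive pair satisfies $j_{m+1} = j_m + 2$ cyclically (conditions (1) and (2) together), then for each generator both its predecessor and successor on the circle are occupied-or-forbidden, so no legal swap exists and the vertex is isolated. Conversely, if some gap is strictly larger than $2$ — say $j_{m+1} > j_m + 2$ — then the vertex $s_{j_m+1}$ lying strictly between them is free, and swapping $s_{j_m} \mapsto s_{j_m+1}$ produces a distinct admissible independent $k$-set odd-adjacent to $W_J$, so $W_J$ is not isolated. The wrap-around gap controlled by condition (2), $j_k + 2 = j_1 \bmod (n+1)$, must be handled on the same footing as the interior gaps, treating indices modulo $n+1$ throughout.

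The main obstacle I anticipate is bookkeeping the modular arithmetic carefully at the cyclic seam: one must verify that when all interior gaps equal $2$ and the final wrap-around gap also equals $2$, the total count $2k \equiv 0$ or agrees with $n+1$ forces a genuine consistency (essentially $n+1 = 2k$ or the near-equality allowed by the ceiling), and that the candidate swaps produced in the non-isolated direction never accidentally coincide with an already-chosen generator or violate independence at the seam. A secondary subtlety is confirming that a swap to a strictly-interior free vertex always lands on a set that is \emph{still} a valid vertex of $\Gamma^k$ (independent and of admissible $\tA_1$-components), which follows from the same $\tH_3$-free, odd-circle hypotheses but should be stated explicitly. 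Once the local swap analysis is pinned down, conditions (1)–(2) emerge as exactly the simultaneous failure of all predecessor/successor swaps, and the equivalence follows.
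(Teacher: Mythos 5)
Your proposal is correct and follows essentially the same route as the paper's own proof: the vertex set is identified with independent sets in the cycle by reducing to the line-case argument (inspection of \cref{fig:Tabelle} leaves only disjoint $\tA_1$'s), and isolation is characterized by exactly the same local predecessor/successor swap analysis, with a swap of $s_{j_m}$ blocked precisely when the neighbouring chosen generator sits at cyclic distance $2$. The paper's proof is in fact terser than your plan; your explicit attention to the cyclic seam and the forced equality $n+1=2k$ for isolated vertices is a correct elaboration of what the paper leaves implicit.
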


\begin{proof} 
The description of $V(\Gamma^k)$ is obtained similarly to that in \Cref{lemma:VerticesInTypeA_path}. 
    Now, since every vertex $s_i$ of $\Gamma$ is odd-adjacent to $s_{i-1}$ and to $s_{i+1} \mod{n+1}$, we can always find in $\Gamma^k$ a vertex odd-adjacent to $\langle s_{j_1}, \dots, s_{j_k}\rangle$ by exchanging one of the $s_{j_m}$ by $s_{j_m +1}$ or $s_{j_m -1} \mod{n+1}$, except possibly when 
    \begin{enumerate}
        \item $j_{m+1}= j_m + 2$ for all $m \in \lbrace 1, \dots, k\rbrace$ --- in this case, every predecessor and successor of a generator $s_{j_m}$ of the corresponding parabolic in $V(\Gamma^k)$ is odd-adjacent in $\Gamma$ to another generator of the same parabolic. This contradicts the fact that the given parabolic has irreducible components from \Cref{fig:Tabelle}; and 
        \item $j_{k}+2 = j_1 \mod{n+1}$ --- here, also the successor of $s_{j_k}$ is adjacent in $\Gamma^k$ to the predecessor of $s_{j_1}$.
    \end{enumerate}
    Thus no rank-$k$ parabolic that is odd-adjacent to the given vertex of $\Gamma^k$ can be found.
\end{proof}

The previous results lay down some foundations to implement a practical algorithm to compute the higher rank odd graphs of Coxeter diagrams. In \Cref{sec:specialCases} we shall make free use of such results to compute $\cc$ for Coxeter groups in many nontrivial examples, after establishing \Cref{thm:CardConjClass} in the next section.

\section{Conjugate parabolic subgroups}
\label{sec:ConjParabolics}

The main goal of this section is to prove \Cref{thm:CliquesConjugateProd,thm:CardConjClass}, to be done in \Cref{sec:mainproofs}. To this end we need a series of auxiliary results describing conjugation between parabolic subgroups, which we prove below in \Cref{sec:parabolic-technical}. 

\subsection{Technical results concerning conjugation of parabolics}
\label{sec:parabolic-technical}
We start the discussion with a combinatorial reformulation of a result by Krammer~\cite[Section 3]{Krammer_2009} and Deodhar~\cite[Proposition 5.5]{Deodhar_1982}.

\begin{theorem}[Combinatorial Deodhar--Krammer Theorem]
    \label{thm:Krammer-Deodhar}
    Let $W_\Gamma$ be a Coxeter group with generating set $S = \lbrace s_i \vert \ i \in I \rbrace$ and $W_J$, $W_K$, $J, K \subseteq I$, $J\neq K$ be  spherical standard parabolic subgroups. 
    The subgroups $W_J$ and $W_K$ are conjugate if and only if
    \begin{enumerate}
    \item there exist a finite sequence of subdiagrams $\Gamma_1, \ldots, \Gamma_m\subseteq\Gamma$ such that $V(\Gamma_1)=J$, $V(\Gamma_m)=K$ and $\Gamma_i$ is adjacent to $\Gamma_{i+1}$ for $i=1\ldots, m-1$,
    \item for $i=1,\ldots, m-1$ the subgroup $W_{V(\Gamma_i)\cup V(\Gamma_{i+1})}$ is spherical,  and
    \item for $i=1,\ldots, m-1$ there exists $a_i\in W_{V(\Gamma_i)\cup V(\Gamma_{i+1})}$ such that $a_iW_{\Gamma_i}a_i^{-1}=W_{\Gamma_{i+1}}$. 
    \end{enumerate}
\end{theorem}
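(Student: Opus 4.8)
The plan is to establish the two implications separately, since they are of very different character: the backward direction (\emph{if}) is essentially a one-line composition of conjugators, whereas the forward direction (\emph{only if}) carries all the content and is precisely where I would invoke the cited results of Deodhar~\cite{Deodhar_1982} and Krammer~\cite{Krammer_2009}.

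For the backward direction, assume conditions~(1)--(3) hold. I would set $g := a_{m-1}a_{m-2}\cdots a_1$ and prove by induction on $i$ that $(a_i\cdots a_1)\,W_J\,(a_i\cdots a_1)^{-1} = W_{\Gamma_{i+1}}$, the inductive step being immediate from $a_i W_{\Gamma_i} a_i^{-1} = W_{\Gamma_{i+1}}$ together with $W_{\Gamma_1} = W_J$. Taking $i = m-1$ and using $W_{\Gamma_m} = W_K$ yields $g W_J g^{-1} = W_K$, so $W_J$ and $W_K$ are conjugate. Observe that this direction uses only the local conjugators supplied by~(3); the sphericity in~(2) and the adjacency structure in~(1) are not needed here and serve only to set up the statement.

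The forward direction is the substantial one. First I would record the standard fact that conjugate standard parabolics have isomorphic Coxeter diagrams (conjugation preserves the intrinsically defined set of reflections of $W$ lying in the subgroup, hence matches the canonical generating sets), so in particular $|J| = |K|$ and all the interpolating diagrams to be produced will be isomorphic. The heart of the argument is to factor an arbitrary conjugation $w W_J w^{-1} = W_K$ into \emph{elementary} conjugations between adjacent spherical parabolics. For this I would appeal to Deodhar's Proposition~5.5 of~\cite{Deodhar_1982}, which treats the case of irreducible finite $W_J$, and to Krammer's reformulation in Section~3 of~\cite{Krammer_2009}, which packages the general (possibly reducible) spherical case. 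The engine behind these results is an induction on $\ell(w)$ using distinguished minimal-length coset representatives: one peels off a single generator at a time so that each intermediate conjugation takes place \emph{inside} a finite parabolic obtained by adjusting one vertex, which is exactly what forces conditions~(2) and~(3). The output is a finite sequence of subsets interpolating between $J$ and $K$ in which consecutive terms add or remove a single generator while keeping the diagram type fixed up to isomorphism.

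The remaining work, and where I expect the main obstacle to lie, is faithful translation rather than new mathematics. One must check that the step `adds or removes one generator with isomorphic diagram' coincides verbatim with the adjacency of \cref{def:adj-graphs}, that the internal conjugator produced is the element $a_i$ required by~(3), and that the one-step ambient parabolic $W_{V(\Gamma_i)\cup V(\Gamma_{i+1})}$ is the spherical group demanded by~(2). A secondary delicate point is the passage from Deodhar's irreducible statement to the reducible setting handled by Krammer: I would need to argue that a conjugation moving several irreducible components can be decomposed into steps each altering only one component at a time, so that every intermediate $\Gamma_i$ is again a disjoint union of subdiagrams actually occurring inside $\Gamma$. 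Once these identifications are in place, the equivalence with the cited theorems is immediate.
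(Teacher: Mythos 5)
The paper offers no proof of this theorem at all: it is stated as a combinatorial reformulation of results of Krammer (Section 3 of his paper) and Deodhar (Proposition 5.5), which are exactly the sources your forward direction invokes, so the mathematical content is deferred to the literature in both cases. Your proposal — the trivial composition of the conjugators $a_i$ for the backward direction, plus an appeal to Deodhar--Krammer (with the honest caveat that the work lies in translating their statements into the adjacency/sphericity/local-conjugator format of conditions (1)--(3)) — is correct and takes essentially the same approach as the paper.
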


Together with our observation on adjacent Coxeter diagrams in \Cref{lem:onevertex} above, we obtain the following characterization of conjugacy for irreducible spherical standard parabolic subgroups with nontrivial center from this: 

\begin{proposition}
    \label{IrreducibleAdjacent}
    Let $W_\Gamma$ be a Coxeter group with generating set $S= \lbrace s_i \vert \  i \in I \rbrace$. Let $J,K \subseteq I$, $J\neq K$ be such that $W_J$ and $W_K$ are irreducible spherical standard parabolic subgroups with nontrivial centers. Then the subgroup $W_J$ is conjugate to $W_K$ if and only if $J=\left\{s_j\right\}$, $ K=\left\{s_k\right\}$, $s_j, s_k\in V(\Gamma)$ and there exists an odd-path in $\Gamma$ from $s_j$ to $s_k$.
\end{proposition}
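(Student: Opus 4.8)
The plan is to prove both implications separately, treating the reverse direction as elementary and the forward direction through the Combinatorial Deodhar--Krammer Theorem (\cref{thm:Krammer-Deodhar}) together with \cref{lem:onevertex}.

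For the ``if'' direction, suppose $J=\{s_j\}$ and $K=\{s_k\}$ and that there is an odd-path $s_j = t_1, t_2, \ldots, t_m = s_k$ in $\Gamma$, so that each consecutive pair $t_i, t_{i+1}$ is joined by an edge carrying an odd (possibly omitted) label $m_{t_i, t_{i+1}}$. First I would record the classical fact that in a finite dihedral group $W_{\{t_i, t_{i+1}\}} \cong \tI(m_{t_i,t_{i+1}})$ of \emph{odd} order all reflections are conjugate; concretely, a suitable power of the rotation $t_i t_{i+1}$ conjugates the subgroup $\langle t_i \rangle$ onto $\langle t_{i+1} \rangle$. Chaining these conjugations along the path then exhibits $W_J = \langle s_j \rangle$ as conjugate to $W_K = \langle s_k \rangle$. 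This direction uses no heavy machinery.

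For the ``only if'' direction, assume $W_J$ and $W_K$ are conjugate. Since they are spherical standard parabolics, \cref{thm:Krammer-Deodhar} yields a finite sequence of subdiagrams $\Gamma_1, \ldots, \Gamma_m \subseteq \Gamma$ with $V(\Gamma_1)=J$, $V(\Gamma_m)=K$, with each $\Gamma_i$ adjacent to $\Gamma_{i+1}$, each $W_{V(\Gamma_i) \cup V(\Gamma_{i+1})}$ spherical, and each $W_{\Gamma_i}$ conjugate to $W_{\Gamma_{i+1}}$ by an element of $W_{V(\Gamma_i) \cup V(\Gamma_{i+1})}$. The key structural observation is that adjacency (\cref{def:adj-graphs}) forces $\Gamma_i \cong \Gamma_{i+1}$ as edge-labeled diagrams, so by transitivity every $\Gamma_i$ is isomorphic to $\Gamma_J$; in particular each $\Gamma_i$ is an irreducible spherical diagram with nontrivial center, hence of one of the types in \cref{fig:Tabelle} by \cref{IrreducibleWithCenter}. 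Now I would invoke \cref{lem:onevertex}: if $\Gamma_J$ were of any type other than $\tA_1$ (that is, $\tB_n$, $\tD_{2n}$, $\tE_7$, $\tE_8$, $\tG$, $\tF$, $\tH_3$, $\tH_4$, or $\tI(2m)$), then each $W_{V(\Gamma_i) \cup V(\Gamma_{i+1})}$ would be infinite, contradicting the sphericity required in condition~(2) of \cref{thm:Krammer-Deodhar}. Therefore $\Gamma_J \cong \tA_1$, so $J$ and $K$ are singletons $\{s_j\}$, $\{s_k\}$, and each $\Gamma_i$ is a single vertex $\langle t_i \rangle$ with $t_1 = s_j$, $t_m = s_k$ and $t_i \neq t_{i+1}$.

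It remains to extract the odd-path. For each $i$, sphericity of $W_{\{t_i, t_{i+1}\}}$ gives $m_{t_i, t_{i+1}} < \infty$, and condition~(3) of \cref{thm:Krammer-Deodhar} provides an element of the dihedral group $W_{\{t_i, t_{i+1}\}}$ conjugating $\langle t_i \rangle$ onto $\langle t_{i+1}\rangle$. Here I would use the dihedral dichotomy in the reverse form: the two standard generators of $\tI(m)$ are conjugate within that group if and only if $m$ is odd (for $m$ even, including $m=2$, they lie in distinct conjugacy classes). Hence each $m_{t_i, t_{i+1}}$ is odd and $\geq 3$, so $\{t_i, t_{i+1}\}$ is an odd edge of $\Gamma$, and $s_j = t_1, \ldots, t_m = s_k$ is the desired odd-path. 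The main obstacle I anticipate is the careful bookkeeping at this last step --- namely making the dihedral conjugacy computation precise and correctly matching the ``odd or unlabeled'' convention for edges with the odd-order condition on $m_{t_i, t_{i+1}}$; the elimination of all non-$\tA_1$ types is conceptually the crux but follows cleanly once \cref{lem:onevertex} is applied to the isomorphic chain.
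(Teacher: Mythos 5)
Your proof is correct, and its core reduction coincides with the paper's: both use \cref{thm:Krammer-Deodhar} to produce a chain of pairwise adjacent, hence mutually isomorphic, subdiagrams, and then combine \cref{IrreducibleWithCenter} with \cref{lem:onevertex} to rule out every type in \cref{fig:Tabelle} except $\tA_1$ (otherwise the unions $W_{V(\Gamma_i)\cup V(\Gamma_{i+1})}$ could not be spherical, contradicting condition~(2)), forcing $J$ and $K$ to be singletons. Where you genuinely diverge is the extraction of the odd path. The paper at that point invokes the classical fact (citing Bourbaki, with a proof included) that two standard generators are conjugate in $W_\Gamma$ if and only if they are joined by an odd path, establishing the hard direction via a parity homomorphism $\Phi\colon W_\Gamma\to\Z/2\Z$ --- a global argument in the full group. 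You instead stay inside the Deodhar--Krammer chain and exploit its condition~(3) locally: each $a_i\in W_{\{t_i,t_{i+1}\}}$ conjugates $t_i$ to $t_{i+1}$ inside a finite dihedral group, and the dihedral dichotomy (the two generators of $\tI(m)$ are conjugate within that group if and only if $m$ is odd, failing in particular at $m=2$) forces every consecutive label to be odd, so the chain itself is the odd path. Your route buys self-containedness --- no appeal to the reflection-conjugacy theorem or the parity homomorphism --- and it reuses the same dihedral fact for the easy ``if'' direction; the paper's route proves slightly more along the way, namely that \emph{any} conjugacy between $s_j$ and $s_k$ in $W_\Gamma$ (not only one realized along a Krammer--Deodhar chain) produces an odd path, which is the standard statement it records. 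Both arguments are complete; just make sure, as you did, to note that $J\neq K$ guarantees the chain contains at least one adjacent pair, so that \cref{lem:onevertex} actually gets applied.
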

\begin{proof}
    \Cref{thm:Krammer-Deodhar} shows that if $W_J$ and $W_K$ are conjugate, then there exists a finite sequence of subgraphs $\Gamma_1, \ldots, \Gamma_m$ such that $V(\Gamma_1)=J$, $V(\Gamma_m)=K$, $\Gamma_i$ is adjacent to $\Gamma_{i+1}$, and the standard parabolic subgroups generated by $V(\Gamma_i)\cup V(\Gamma_{i+1})$ are finite. By \cref{IrreducibleWithCenter} and \Cref{lem:onevertex} it follows that $W_J$ and $W_K$ are cyclic of order $2$. 
    Further, it was proven  in \cite[p.~5, Proposition~3]{Bourbaki_2002} that $s_j$ is conjugate to $s_k$ if and only if there exist elements $s_1,\ldots, s_{\ell}\in V(\Gamma)$ such that $s_1=s_j, s_{\ell}=s_k$ and $\ord(s_i s_{i+1})$ is odd for $i=1,\dots, \ell-1$, which is equivalent to saying that there exists an odd path in $\Gamma$ from $s_j$ to $s_k$. For the sake of completeness we include a proof of this fact here. We define $A_{s_i}:=\left\{s\in V(\Gamma)\mid\text{ there exists an odd path from }s_i\text{ to } s\right\}$. It is clear that each $s\in A_{s_i}$ is conjugate to $s_i$. Now we define a map $\varphi\colon V(\Gamma)\to \mathbb{Z}/2\mathbb{Z}$ as follows: $\varphi(s)=0$ if $v\in A_{s_i}$ and $\varphi(s)=1$ if $v\notin A_{s_i}$. An easy calculation shows that this map extends to $\Phi\colon W_\Gamma\to\mathbb{Z}/2\mathbb{Z}$. Now, if $s_j\in V(\Gamma)$ is conjugate to $s_i$, then $\Phi(s_j)=0$ and since $\Phi$ is the extension of $\varphi$, we have $\Phi(s_j)=\varphi(s_j)=0$ which shows that $s_j\in A_{s_i}$.
\end{proof}

For conjugated pairs of (not necessarily reduced) standard parabolics the following holds:

\begin{lemma}(Conjugacy of irreducible components)
    \label{Products}
    Let $W_\Gamma$ be a Coxeter group. Let $W_J$ and $W_K$ be 
standard parabolic subgroups of $W_\Gamma$. If $W_J$ is conjugate to $W_K$, then the irreducible components of $W_J$ are conjugate to the irreducible components of $W_K$.
\end{lemma}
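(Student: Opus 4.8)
The plan is to recover the irreducible components of a parabolic subgroup purely from its set of reflections together with the commutation relations among them, and then to observe that conjugation preserves exactly this data. Throughout, let $R := \{wsw^{-1}\mid w\in W_\Gamma,\ s\in S\}$ denote the set of all reflections of $W_\Gamma$, and for a standard parabolic $W_J$ recall the well-known fact (cf.\ \cite{Bourbaki_2002}) that the reflections of the Coxeter system $(W_J,S_J)$ are exactly $W_J\cap R$. Writing $W_J=\prod_{\alpha}W_{J_\alpha}$ for the decomposition into irreducible components, indexed by the connected components $J_\alpha$ of $\Gamma_J$, the reflection set $W_J\cap R$ splits as the disjoint union $\bigsqcup_\alpha (W_{J_\alpha}\cap R)$, and each factor satisfies $W_{J_\alpha}=\langle W_{J_\alpha}\cap R\rangle$; the same applies to $W_K$.

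First I would introduce, for any parabolic $P\leq W_\Gamma$, the \emph{non-commutation graph} $\mathcal{N}(P)$ whose vertices are the reflections in $P$ and whose edges join two reflections that do not commute. The heart of the argument is the claim that the connected components of $\mathcal{N}(W_J)$ are precisely the reflection sets $W_{J_\alpha}\cap R$ of the irreducible components. One inclusion is immediate: reflections lying in distinct direct factors commute, so $\mathcal{N}(W_J)$ carries no edges between different components and its connected components refine the partition by irreducible components.

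Next I would establish the reverse inclusion, namely that $\mathcal{N}(W'')$ is connected whenever $W''$ is irreducible. The standard generators of $W''$ already span a connected subgraph of $\mathcal{N}(W'')$, since an edge of the Coxeter diagram $\Gamma''$ records a non-commuting pair of generators and $\Gamma''$ is connected by irreducibility. Let $C$ be the connected component of $\mathcal{N}(W'')$ containing all generators. If some reflection $r$ lay outside $C$, then $r$ would commute with every reflection in $C$, in particular with every standard generator, and would therefore be central in $W''$. By \cref{IrreducibleWithCenter} the center of an irreducible Coxeter group is either trivial or generated by the longest element $w_0$; once $\rk(W'')\geq 2$ this $w_0$ acts as $-\mathrm{id}$ in the geometric representation and hence is not a reflection, while for $\rk(W'')=1$ the graph $\mathcal{N}(W'')$ is a single vertex. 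Thus no such stray $r$ exists, $C$ exhausts all reflections, and the claim follows. I expect this center computation to be the only genuinely delicate point, since it is exactly what rules out reflections detached from the generators.

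Finally I would conclude. As $R$ is a union of conjugacy classes, conjugation by the element $w$ satisfying $wW_Jw^{-1}=W_K$ restricts to a bijection $w(W_J\cap R)w^{-1}=W_K\cap R$ that preserves commutation, hence an isomorphism $\mathcal{N}(W_J)\xrightarrow{\sim}\mathcal{N}(W_K)$ carrying connected components to connected components. By the claim this induces a bijection $\alpha\mapsto\beta(\alpha)$ with $w(W_{J_\alpha}\cap R)w^{-1}=W_{K_{\beta(\alpha)}}\cap R$. Taking the generated subgroup and using $W_{J_\alpha}=\langle W_{J_\alpha}\cap R\rangle$ together with $W_{K_\beta}=\langle W_{K_\beta}\cap R\rangle$ then yields $wW_{J_\alpha}w^{-1}=W_{K_{\beta(\alpha)}}$, so each irreducible component of $W_J$ is conjugate to an irreducible component of $W_K$ (and conversely), which is precisely the assertion.
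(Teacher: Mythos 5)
Your proof is correct, but it takes a genuinely different route from the paper. The paper's argument is essentially two lines: it invokes \cite[Proposition~4.5.10]{Davis2008}, which says that conjugate standard parabolic subgroups are conjugate by an element $w$ satisfying $wS_Jw^{-1}=S_K$; since conjugation preserves the orders $m_{i,j}$ of products of generators, such a $w$ induces an isomorphism of \emph{labeled} Coxeter diagrams $\Gamma_J\cong\Gamma_K$, which carries connected components to connected components, and the claim follows. You avoid this rigidity result entirely: you work with an arbitrary conjugating element and instead recover the irreducible factors intrinsically, as the subgroups generated by the connected components of the non-commutation graph on $W_J\cap R$. The inputs you need for this are the standard fact that the reflections of $(W_J,S_J)$ are exactly $W_J\cap R$, the classification of centers (the paper's \cref{IrreducibleWithCenter}), and the observation that in the relevant types of rank at least two the central longest element acts as $-\mathrm{id}$ and so cannot be a reflection --- this last point is indeed the delicate step, and your eigenvalue argument for it is sound (determinant alone would not suffice in odd rank). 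What each approach buys: the paper's proof is shorter but leans on a nontrivial theorem about how standard parabolics can be conjugate; yours is more self-contained on that front and has the added merit of exhibiting the irreducible decomposition as something readable off purely group-theoretic data (reflections plus commutation), at the cost of importing reflection-theoretic facts and the $w_0=-\mathrm{id}$ classification. Both arguments yield the slightly stronger conclusion implicit in the statement, namely that a single conjugator works simultaneously for all components.
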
 
\begin{proof}
By \cite[Proposition 4.5.10]{Davis2008} the subgroup $W_J$ is conjugate to $W_K$ if and only if there exists $w\in W_\Gamma$ such that $wS_Jw^{-1}=S_K$. 
Further, the conjugation by $w$ induces an isomorphism of graphs $\Gamma_J$ and $\Gamma_K$. Precisely, for $s_i, s_j\in S_J$ and writing $s_a = w s_i w^{-1}$ and $s_b = w s_j w^{-1}$, we have $m_{i,j}= m_{a,b}$. 
That is, labels are preserved and the irreducible components of $W_J$ are conjugate to the irreducible components of $W_K$.
\end{proof}
    
Combining \Cref{IrreducibleAdjacent} with \Cref{Products} we obtain a splitting for the conjugated subgroups as follows:
    
\begin{corollary}
\label{ProductsZ2}
    Let $W_\Gamma$ be a Coxeter group and let $W_J$ and $W_K$ 
be distinct spherical standard parabolic subgroups whose longest elements are central. 
    If $W_J$ is conjugate to $W_K$, then there exists $\ell\in\mathbb{N}$ and a spherical standard parabolic subgroup $W_X$ such that  $W_J=(\mathbb{Z}/2\mathbb{Z})^\ell\times W_X$, $W_K=(\mathbb{Z}/2\mathbb{Z})^\ell\times W_X$, and $\mathbb{Z}/2\mathbb{Z}$ is not a direct factor of $W_X$.
\end{corollary}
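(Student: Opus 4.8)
The plan is to reduce the statement to a component-wise analysis of $W_J$ and $W_K$ and to isolate precisely the rank-one (type $\tA_1$) irreducible components. First I would record the structural input coming from the hypothesis: since $W_J$ and $W_K$ are spherical with \emph{central} longest elements, and since the longest element of a reducible finite Coxeter group is the product of the longest elements of its irreducible components (and is central exactly when each of these is), every irreducible component of $W_J$ and of $W_K$ has a central longest element. By \Cref{IrreducibleWithCenter} each such component is therefore of one of the types listed in \Cref{fig:Tabelle}; in particular, every component is either of type $\tA_1$ or has rank at least two.

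Next I would extract a matching of components from the conjugacy. By \Cref{Products}, $W_J$ being conjugate to $W_K$ yields an element $w$ with $wS_Jw^{-1}=S_K$ that induces a label-preserving isomorphism $\Gamma_J\cong\Gamma_K$, and hence a bijection $\sigma$ between the irreducible components of $W_J$ and those of $W_K$ sending each component to a conjugate one with isomorphic Coxeter diagram. Since $\sigma$ preserves the diagram isomorphism type, it matches $\tA_1$-components with $\tA_1$-components; let $\ell$ denote their common number.

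The crux is to show that the higher-rank components \emph{cannot move}. Fix a component $W_{J_i}$ of rank at least two; then $W_{J_i}$ and its partner $W_{\sigma(J_i)}$ are conjugate, irreducible, spherical, and have nontrivial center. By \Cref{IrreducibleAdjacent}, two \emph{distinct} conjugate parabolics of this kind must both be singletons, i.e.\ of type $\tA_1$; as $W_{J_i}$ has rank $\geq 2$ this is impossible unless the two coincide, so $J_i=\sigma(J_i)$. Hence every non-$\tA_1$ component of $W_J$ agrees, as a standard parabolic subgroup (same generating vertices), with the corresponding component of $W_K$. Taking $W_X$ to be the internal product of all these non-$\tA_1$ components, I obtain a single standard parabolic $W_X$ with generating set $S_X$ common to both groups, so that $W_J=(\mathbb{Z}/2\mathbb{Z})^\ell\times W_X$ and $W_K=(\mathbb{Z}/2\mathbb{Z})^\ell\times W_X$, where the two copies of $(\mathbb{Z}/2\mathbb{Z})^\ell$ are spanned by the $\ell$ isolated $\tA_1$-vertices of $\Gamma_J$ and of $\Gamma_K$, respectively. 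By construction $W_X$ is spherical and has no irreducible component of type $\tA_1$, which is the intended meaning of ``$\mathbb{Z}/2\mathbb{Z}$ is not a direct factor of $W_X$'': here ``direct factor'' refers to the decomposition into irreducible Coxeter factors, and indeed the central involutions hiding inside components such as $\tI(2m)$ or $\tG$ are not standard generators and thus do not split off as $\tA_1$ parabolic factors.

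I expect the main obstacle to be this rigidity step, namely the careful application of \Cref{IrreducibleAdjacent} to force $J_i=\sigma(J_i)$ for every higher-rank component, together with the bookkeeping needed to conclude both that the two $\tA_1$-counts agree (this is where the diagram isomorphism from \Cref{Products} is essential) and that the resulting $W_X$ is literally \emph{one and the same} standard parabolic inside $W_J$ and inside $W_K$. A minor but worthwhile point, as noted above, is to state explicitly the convention under which $\mathbb{Z}/2\mathbb{Z}$ is declared not to be a direct factor of $W_X$, so as to avoid confusion with abstract group isomorphisms.
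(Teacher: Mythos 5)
Your proposal is correct and takes essentially the same route as the paper's proof: decompose $W_J$ and $W_K$ into irreducible components (all of types in \Cref{fig:Tabelle} by centrality of the longest elements and \Cref{IrreducibleWithCenter}), match components via \Cref{Products}, use \Cref{IrreducibleAdjacent} to force literal equality of every component of rank at least two, and collect the $\tA_1$ factors into $(\mathbb{Z}/2\mathbb{Z})^\ell$. The only detail the paper makes explicit that you leave implicit is invoking the hypothesis $J \neq K$ to guarantee $\ell \geq 1$ (i.e., that at least one $\tA_1$ factor actually occurs).
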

\begin{proof}
Let $W_J=W_{X_1}\times\ldots\times W_{X_n}$ and $W_K=W_{Y_1}\times\ldots\times W_{Y_m}$ be the direct decompositions into irreducible components, i.e., the subgraphs induced by the $X_i$ (resp. the $Y_j$) are the connected components of the subgraph induced by $J$ (resp. by $K$). Since the longest element in $W_J$ resp. in $W_K$ is central, the subgraphs of $\Gamma$ induced by $X_1, \ldots, X_n$ resp. $Y_1,\ldots, Y_m$ are of types listed in \Cref{fig:Tabelle}. 

If $W_J$ is conjugated to $W_K$, then \Cref{Products} shows that $n=m$ and there exists a permutation $\varphi\colon\left\{1,\ldots, n\right\}\to\left\{1,\ldots, n\right\}$ such that for $i=1,\ldots, n$ the standard parabolic subroup  $W_{X_i}$ is conjugated to $W_{Y_{\varphi(i)}}$. Let $i\in \{1,\ldots, n\}$. If the subgraph induced by $X_i$ is not of type $\tA_1$, then
by \Cref{IrreducibleAdjacent} we obtain the equality $X_i=Y_{\varphi(i)}$.

By assumption $J\neq K$. Hence, after isolating the $\Z/2\Z$ factors of the direct product decompositions for $W_J$ and $W_K$ above, we obtain some $\ell\in\mathbb{N}$ and a (spherical) standard parabolic subgroup $W_X$ such that  $W_J=(\mathbb{Z}/2\mathbb{Z})^\ell\times W_X$ and $W_K=(\mathbb{Z}/2\mathbb{Z})^\ell\times W_X$ with $\mathbb{Z}/2\mathbb{Z}$ not a direct factor of $W_X$.
\end{proof}

A final ingredient for the proof of \Cref{thm:CliquesConjugateProd} is the following proposition describing conjugacy of parabolics by means of connecting odd-paths. 

\begin{proposition}(Conjugate, odd-connected parabolics)
    \label{prop:CliquesConjugate}
    Let $W_\Gamma$ be a Coxeter group. Let $W_J\cong(\mathbb{Z}/2\mathbb{Z})^\ell$ and $W_K\cong(\mathbb{Z}/2\mathbb{Z})^\ell$ be elementary abelian spherical standard parabolic subgroups of same rank. Then 
    $W_J$ is conjugate to $W_K$ if and only if there exists 
    a finite sequence of standard parabolic  subgroups $W_1, \ldots, W_m$ such that $W_1=W_J$, $W_m=W_K$ and $W_i$ is odd-adjacent to $W_{i+1}$ for $i=1,\ldots, m-1$.
\end{proposition}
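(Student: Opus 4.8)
The plan is to prove both implications, handling the comparatively easy ``if'' direction first and then the ``only if'' direction, which carries the bulk of the work.

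For the ``if'' direction, since conjugacy is transitive it suffices to show that a single odd-adjacency step already produces a conjugating element. So assume $W_i$ and $W_{i+1}$ are odd-adjacent, witnessed by decompositions $W_i = W_L \times W_X$, $W_{i+1} = W_M \times W_X$ with $W_L \cong W_M \cong (\mathbb{Z}/2\mathbb{Z})^{\ell'}$, $S_L = S_0 \cup \{s_l\}$ and $S_M = S_0 \cup \{s_m\}$ (where $S_0 = S_L \cap S_M$), and with $\{s_l, s_m\}$ an odd or unlabelled edge. Because such an edge has odd label $m_{l,m}$ (unlabelled meaning $m_{l,m}=3$), the two Coxeter generators of the dihedral group $\langle s_l, s_m\rangle$ are conjugate inside it, so there is $d \in \langle s_l, s_m\rangle$ with $d s_l d^{-1} = s_m$. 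The key point is that both $s_l$ and $s_m$ commute with every generator in $S_0 \cup S_X$ (by the elementary-abelian and direct-product hypotheses), whence $d$ centralises $W_{S_0}$ and $W_X$. A direct computation then gives $d W_i d^{-1} = d(W_{S_0} \times \langle s_l\rangle \times W_X)d^{-1} = W_{S_0} \times \langle s_m\rangle \times W_X = W_{i+1}$, so $W_i$ and $W_{i+1}$ are conjugate.

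For the ``only if'' direction I would start from the Combinatorial Deodhar--Krammer Theorem (\Cref{thm:Krammer-Deodhar}): since $W_J$ and $W_K$ are conjugate spherical parabolics, there is a chain of subdiagrams $\Gamma_1, \ldots, \Gamma_m$ with $V(\Gamma_1) = J$, $V(\Gamma_m) = K$, consecutive diagrams adjacent, each $W_{V(\Gamma_i)\cup V(\Gamma_{i+1})}$ spherical, and an element $a_i \in W_{V(\Gamma_i)\cup V(\Gamma_{i+1})}$ with $a_i W_{\Gamma_i}a_i^{-1} = W_{\Gamma_{i+1}}$. First I would observe that all the $W_{\Gamma_i}$ are mutually conjugate, hence all isomorphic to $W_J \cong (\mathbb{Z}/2\mathbb{Z})^\ell$; in particular every $\Gamma_i$ consists of $\ell$ pairwise non-adjacent vertices, and adjacency forces consecutive $\Gamma_i, \Gamma_{i+1}$ to differ in exactly one vertex, say $s_i' \in V(\Gamma_i)\setminus V(\Gamma_{i+1})$ and $s_{i+1}'\in V(\Gamma_{i+1})\setminus V(\Gamma_i)$. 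Writing $X = V(\Gamma_i)\cap V(\Gamma_{i+1})$, the plan is to show that $\{s_i', s_{i+1}'\}$ is an odd edge, which by \cref{def:odd-adjacent} (with $S_L = \{s_i'\}$, $S_M = \{s_{i+1}'\}$, $S_X = X$ and $\ell' = 1$) makes $W_{\Gamma_i}$ and $W_{\Gamma_{i+1}}$ odd-adjacent and yields the required sequence.

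The main obstacle, and the heart of the argument, is verifying that this differing edge is genuinely odd. Here I would use that $s_i'$ and $s_{i+1}'$ both commute with all of $X$, so that $W_P := W_{V(\Gamma_i)\cup V(\Gamma_{i+1})} = W_X \times \langle s_i', s_{i+1}'\rangle$. If $s_i'$ and $s_{i+1}'$ commuted, then $W_P$ would be abelian and the two distinct subgroups $W_{\Gamma_i}, W_{\Gamma_{i+1}}$ could not be conjugate; hence $\langle s_i', s_{i+1}'\rangle$ is a genuine dihedral group $\tI(m)$ with $m \geq 3$, finite since $W_P$ is spherical. Writing the conjugator as $a_i = x d$ with $x \in W_X$ and $d \in \langle s_i', s_{i+1}'\rangle$, and comparing the $\langle s_i'\rangle$-factors in $a_i W_{\Gamma_i}a_i^{-1} = W_{\Gamma_{i+1}}$, one obtains $d s_i' d^{-1} = s_{i+1}'$; that is, the two generators of $\tI(m)$ are conjugate within it, which happens precisely when $m$ is odd. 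This is exactly the odd-edge condition, completing the proof. One could alternatively invoke \Cref{Products} to transfer the conjugacy to the irreducible $\tI(m)$-component, but the explicit $a_i = xd$ computation seems most transparent.
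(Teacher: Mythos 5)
Your proof is correct and takes essentially the same route as the paper: the ``if'' direction via a conjugator in the odd dihedral subgroup $\langle s_l,s_m\rangle$, which centralises the common factor, and the ``only if'' direction via \Cref{thm:Krammer-Deodhar}, observing that all intermediate parabolics are elementary abelian, that consecutive ones differ by a single vertex, and that the two differing vertices must be joined by an odd edge. Your explicit factorisation $a_i = xd$ with $x \in W_X$, $d \in \langle s_i', s_{i+1}'\rangle$, forcing $d s_i' d^{-1} = s_{i+1}'$ and hence an odd label, spells out in detail the step that the paper handles more tersely by invoking \Cref{lem:onevertex} together with parts~(2) and~(3) of \Cref{thm:Krammer-Deodhar}.
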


\begin{proof}
    Let $W_J \cong W_K \cong (\mathbb{Z}/2\mathbb{Z})^\ell$ be conjugated.  
    By \Cref{thm:Krammer-Deodhar} there exists a finite sequence of standard parabolic subgroups $W_{\Gamma_1}, \dots, W_{\Gamma_m}$ with $V(\Gamma_1)=J, \ V(\Gamma_m)=K$ and $\Gamma_i$ adjacent to $\Gamma_{i+1}$ for all $i = 1, \dots, m-1$. In particular, $\Gamma_i$ and $\Gamma_{i+1}$ differ by a single vertex. 
    Moreover, \Cref{lem:onevertex} together with parts~(2) and~(3) of \Cref{thm:Krammer-Deodhar} implies that the vertices that differ between $\Gamma_i$ and $\Gamma_{i+1}$ must be connected via an odd-labeled edge in $\Gamma$ since $W_{\Gamma_i}$ and $W_{\Gamma_{i+1}}$ are conjugated via an element $a_i \in W_{V(\Gamma_i) \cup V(\Gamma_{i+1})}$. Hence $W_{\Gamma_i}$ is odd-adjacent to $W_{\Gamma_{i+1}}$. 
    
    Conversely, let $W_J =W_1, W_2, \dots, W_m = W_K$ be a finite sequence of odd-adjacent standard parabolic subgroups. 
    Let $\lbrace s_1, \dots, s_e\rbrace$ be a generating set for $W_i$ and $\lbrace s_1, \dots, s_{e-1}, t_e\rbrace$ for $W_{i+1}$ with $i \in \lbrace 1, \dots, m-1\rbrace$.
    Since $W_i$ and $W_{i+1}$ are odd-adjacent, $\ord(s_e t_e)=q$ is odd.
    Define $w_i:= s_e t_e s_e\cdots t_e$ where the length of $w_i$ is $2q-1$.
    Then an easy calculation shows that $w_i W_i w_i^{-1} = W_{i+1}$. Iterating this process, it follows that the subgroups $W_J$ and $W_K$ are conjugated. 
\end{proof}

\subsection{Proofs of the main results} \label{sec:mainproofs}

We are now ready to prove our main technical result, \Cref{thm:CliquesConjugateProd}.

\begin{proof}[Proof of \Cref{thm:CliquesConjugateProd}]
    If the subgroups $W_J$ and $W_K$ fulfill the conditions stated and are conjugate, \Cref{ProductsZ2} shows that there exist index subsets $L,M,X \subseteq I$ and a splitting $W_J = W_L \times W_X$, $W_K=W_M \times W_X$ as described in item~(1). 
    Furthermore, item~(2) follows by applying \Cref{prop:CliquesConjugate} to the abelian factors $W_L$ and $W_M$. 
    Conversely, if $W_J$ and $W_K$ satisfy items~(1) and~(2), the claim follows immediately from \Cref{prop:CliquesConjugate}, again applied to the abelian factors.
\end{proof}

Our main result, \cref{thm:CardConjClass}, is now an easy consequence of \cref{thm:CliquesConjugateProd} and previous lemmata.

\begin{proof}[Proof of \Cref{thm:CardConjClass}]
Given an arbitrary involution $c \in W_\Gamma$, apply \cref{lem:LongestEltsRepresentInvolutions} to find a central longest element in a spherical parabolic that represents its conjugacy class. (Note that this representative might well be a standard reflection.) This association is unique up to conjugation by \cref{lem:ConjLongest}. The set of conjugacy classes of involutions is thus completely described once we know which spherical parabolics of $W_\Gamma$ whose longest elements are central are conjugate to one another. But \cref{thm:CliquesConjugateProd} shows that these conjugacy classes correspond bijectively to connected components of the $k$-odd graphs $\Gamma^k$. The theorem follows, as does \cref{cor:conjClassRefinement}.
\end{proof}

\begin{proof}[Proof of \Cref{cor:unterSchranke}]
Let $\Gamma$ be a Coxeter diagram of rank $n$. Since $\Gamma^1$ is always nonempty, by \Cref{thm:CardConjClass} there is only one conjugacy class of involutions in $W_\Gamma$ if and only of $|\pi_0(\Gamma^1)|=1$ and the vertex set of $\Gamma^k$ is empty for $2\leq k\leq n$. 
By definition of higher rank odd graphs, $V(\Gamma^k)$ is empty for $2\leq k\leq n$ if and only if $\Gamma$ is complete and every edge in $\Gamma$ is odd or $\infty$. This proves~(1).

Further, by \Cref{thm:CardConjClass} every involution in $W_\Gamma$ is a reflection if and only if $V(\Gamma^k)$ is empty for $2\leq k\leq n$ . The proof of part~(1) shows that this is the case if and only if $\Gamma$ is complete and every edge label in $\Gamma$ is odd or $\infty$, whence item~(2).
\end{proof}

Recall from the introduction that for a given Coxeter group $W$ the total number of conjugacy classes of involutions is denoted by $\mathrm{cc}_2(W)$. Having proved our main theorems, we quickly compute $\cc(W)$ for the easy examples from \cref{sec:OddGraphs}. Further (more intricate) examples are discussed in \Cref{sec:specialCases}. 

\begin{example}[Universal Coxeter groups] \label{ex:universalCoxetergroups}
Standard facts from Bass--Serre theory imply that $\mathrm{cc}_2(W_\Gamma)=n$ for for the universal Coxeter group $W_\Gamma \cong \ast_{i=1}^n \Z/2\Z$ of rank $n$. Alternatively from \cref{ex:universalCoxetergroups} and \cref{thm:CardConjClass} we readily obtain $\cc(\displaystyle\ast_{i=1}^n \Z/2\Z) = n$. 
\end{example}

\begin{example}[Elementary abelian groups]
\label{ex:finiteAbelian}
   It is clear that a finite abelian Coxeter group $W = (\Z/2\Z)^n$ of rank $n$ satisfies $\mathrm{cc}_2(W) = 2^n-1 = \vert W \setminus \{1\}\vert$. The reason is that, in this case, conjugacy classes bijectively correspond to group elements and every nontrivial element is an involution. The same count can be obtained from \cref{ex:oddgraphsabelian} and the formula from \Cref{thm:CardConjClass}:  
    \[
    \mathrm{cc}_2\left(\prod_{i=1}^n \Z/2\Z\right) = \sum_{k=1}^{n}\vert\pi_0(\Gamma^k)\vert=\sum_{k=1}^{n}\binom{n}{k}=2^n-1. 
    \] 
\end{example} 

\begin{example}[Type $\tA$]
From \cref{ex:A4} and \cref{thm:CardConjClass} we know that $\cc(W_\Gamma) = 2$ in case $\Gamma$ is the spherical diagram of type $\tA_4$. A general formula for $\cc(W_{\tA_n})$ will be given in \Cref{thm:finiteCoxeter}. 
\end{example}

In what follows we prove Theorem~\ref{thm:obereSchranke}. Before diving into the proof we need the following concept.

\begin{definition}[$\mc{O}$-graphs] \label{def:omegas}
Let $\Gamma$ be a Coxeter diagram. The first $\mc{O}$-graph $\Omega^1$ is defined as $\Gamma^1$ itself and, for $k\geq 2$, we define the graph $\Omega^k$ as follows: the vertex set of $\Omega^k$ is equal to the vertex set of $\Gamma^k$, and two vertices $W_I, W_J$ in $\Omega^k$ are connected if and only if $W_I$ is isomorphic to $W_J$.  
\end{definition}

\begin{proof}[Proof of \Cref{thm:obereSchranke}]
We start with the numerical bounds from Inequality~(2) in the statement. Note that the lower bound $1 \leq \cc(W_\Gamma)$ is immediate, and sharp by \cref{cor:unterSchranke}. We now deduce the upper bounds $2^n-1$ and $2^n-2$. 

There are at most $\binom{n}{i}$ standard rank-$i$ parabolics in any Coxeter group $W_\Gamma$ of rank $n$. Hence the number of connected components in the $k$-odd graphs satisfies
\[
\vert\pi_0(\Gamma^i)\vert \leq \vert V(\Gamma^i)\vert \leq \binom{n}{i},
\]
and thus 
\[ 
\mathrm{cc}_2(W) = \sum_{k=1}^{n}\vert\pi_0(\Gamma^k)\vert \leq  \sum_{k=1}^{n}\binom{n}{k} = 2^{n}-1.
\]
In case $W$ is finite, the inequalities above become equalities for elementary abelian Coxeter groups and the given upper bound for $\mathrm{cc}_2(W_\Gamma)$ is sharp, as seen in \cref{ex:finiteAbelian}. For a nonabelian (spherical) example, see \cref{ex:nonabelianupper} below.

The group $W_\Gamma$ is trivially a (nonproper) parabolic subgroup of itself, of rank $n=\vert V(\Gamma)\vert$. Hence the $n$-odd graph $\Gamma^n$ is nonempty and contains a single vertex if and only if $W_\Gamma$ is a direct product of finite Coxeter groups whose types are listed in \Cref{fig:Tabelle}, in which case $W_\Gamma$ itself is also finite. 
Therefore the last summand for $k=n$ must be $0$ in the infinite case. This implies that
\[ 
\mathrm{cc}_2(W_\Gamma) = \sum_{k=1}^{n}\vert\pi_0(\Gamma^k)\vert = 0+\sum_{k=1}^{n-1}\vert\pi_0(\Gamma^k)\vert \leq  \sum_{k=1}^{n}\binom{n}{k} -1 = 2^{n}-2
\]
is an upper bound in the infinite case. 
Note that the the affine group of type $\atC_2$ 
has the property that $\Gamma^1$ and $\Gamma^2$ are totally disconnected with maximal number of vertices. Hence $\mathrm{cc}_2(W_{\atC_2}) = 3 + 3 = \binom{3}{1}+\binom{3}{2}+0=2^{3}-2$ and the upper bound
 is sharp in the infinite case as well. 

Suppose that $\vert V(\Gamma)\vert = n \geq 3$ and the upper bounds are attained. Then, for all $k$, every rank-$k$ proper standard parabolic subgroup of $W_\Gamma$ gives rise to a vertex of $\Gamma^k$. Moreover, all vertices in $\Gamma^k$ are isolated and there is no odd-adjacent pair of parabolics in $V(\Gamma^k)$. So $W_\Gamma$ has no edge with an odd label (as otherwise there would be odd-adjacent rank-$1$ subgroups) and in particular the rank-$2$ subgroups are finite and no edge is labeled $\infty$. This is saying $W_\Gamma$ is even and $2$-spherical.  

We argue that no infinite Coxeter group $W_\Gamma$ of rank $\vert V(\Gamma)\vert = n > 3$ attains the maximum. Assume, on the contrary, that this is the case. Then all $k$-odd graphs contribute positively to the formula for $\cc(W_\Gamma)$ and we can estimate the number of their connected components as follows: We have already observed that the rank-$2$ subgroups of $W_\Gamma$ are finite; So let us consider the $3$-odd graph. 
We have that $\vert\pi_0(\Gamma^3)\vert = \binom{n}{3} \geq \binom{4}{3} = 4$. 
Exhausting the list in \Cref{fig:Tabelle}, if all rank-$3$ parabolics are of type $\tA_1 \sqcup \tA_1 \sqcup \tA_1$, then all generators of $W_\Gamma$ commute with one another, so $W_\Gamma$ is elementary abelian (and finite). Thus $\Gamma^3$ must have at least one vertex of type different than $\tA_1 \sqcup \tA_1 \sqcup \tA_1$. By the previous paragraph, though, we know that $\Gamma$ is an even diagram. Thus consulting back the list in \Cref{fig:Tabelle}, we see that the only possible rank-$3$ even finite parabolic that can occur (besides $\tA_1 \sqcup \tA_1 \sqcup \tA_1$) is $\tA_1 \sqcup \tI(2r)$. 
Hence the group $W_\Gamma$ must contain at least one parabolic of type $\tA_1 \sqcup \tI(2r)$. 

Further, we claim that there exists no path in the diagram $\Gamma$ connecting the above $\tA_1$- and $\tI(2r)$-subgroups. Such a path must be even as shown above. On the other hand, as we are assuming that $\mathrm{cc}_2(W_\Gamma)$ attains the upper bound, the vertex set of this path---being a subset of $V(\Gamma)$---would itself have to yield a parabolic listed in \Cref{fig:Tabelle}, which is impossible for the following reason:  

Suppose there exists such an edge path $\gamma$ in $\Gamma$ connecting the $\tA_1$-vertex to a vertex in the $\tI(2r)$. There is at least one extra vertex $v$ lying on the path $\gamma$ connected to (at least) one vertex in the subdiagram $\tI(2r)$. This connecting edge is even and has a finite label. Thus $\{v\}\cup \tI(2r)$ yields an irreducible triangle subgroup of $W_\Gamma$ of the form $\Delta(2p,2q,2r)$ with $r\geq 2$, $p \geq 2$, $q \geq 1$. (See \Cref{sec:triangle} for more on triangle groups.) To reach the upper bound, every $k$-element subset of $V(\Gamma)$ must yield a parabolic defining a vertex in the $k$-odd graph. Hence this triangle subgroup $\Delta(2p,2q,2r)$ we just constructed must yield a vertex in $\Gamma^3$. So in particular it has to be finite and listed in \Cref{fig:Tabelle}. But such a group does not exist. Hence the path $\gamma$ connecting $\tA_1$ to $\tI(2r)$ cannot exist.

The non-existence of the connecting path implies that the diagram $\Gamma$ is disconnected and decomposes as $\Gamma = \Gamma_1 \sqcup \Gamma_2$ with  both $\Gamma_1, \Gamma_2$ nonempty. Again because $\mathrm{cc}_2(W)$ is maximal, the proper parabolic subgroups $W_{\Gamma_1}$ and $W_{\Gamma_2}$ now need to be spherical. But then $W = W_{\Gamma_1} \times W_{\Gamma_2}$ itself would be finite which contradicts our standing assumption. Thus the rank of $W_\Gamma$ is at most three, as claimed.

We now deduce the bounds from Inequality~(1). The lower bound is a triviality. Indeed, recall that $V(\Omega^k) = V(\Gamma^k)$. By \Cref{def:odd-graphs}, two vertices $W_J, W_L \in V(\Gamma^k)$ are connected by an edge when $W_J$ and $W_L$ are odd-adjacent. Again by \Cref{def:odd-graphs}, this implies in particular that $W_J$ and $W_L$ are isomorphic. Thus there is an edge in $\Omega^k$ connecting $W_J$ to $W_L$; cf. \Cref{def:omegas}. That is, any two vertices adjacent in $\Gamma^k$ are automatically adjacent in $\Omega^k$. This implies that 
\[\mathrm{cc}_2(W_\Gamma) = |\pi_0(\Gamma^1)| + \ldots + |\pi_0(\Gamma^n)| \geq |\pi_0(\Omega^1)|+\ldots+|\pi_0(\Omega^n)|.\]

Now let $W_{\Delta_1},\ldots, W_{\Delta_m}$ denote the maximal spherical standard parabolic subgroups of $W_\Gamma$ with respect to set-theoretic containment. If $w\in W_\Gamma$ is an involution, apply \Cref{lem:LongestEltsRepresentInvolutions} to find a standard spherical parabolic $W_J$ that contains a conjugate of $w$. Being spherical, this parabolic is, in turn, contained in a $W_{\Delta_i}$ for some $i \in \{1,\ldots,m\}$. This shows that
\[
\cc(W_\Gamma)\leq \cc(W_{\Delta_1})+\ldots +\cc(W_{\Delta_m}),
\]
as desired. 
\end{proof}

We remark that the upper bound from Inequality~(1) in \Cref{thm:obereSchranke} is sharp for finite Coxeter groups for trivial, uninteresting reasons: since they are spherical parabolic subgroups of themselves, the given inequality becomes $\cc(W_\Gamma) \leq \cc(W_{\Delta_1})$ where $\Delta_1=\Gamma$ itself. For a curious question about the lower bound with the $\mc{O}$-graphs $\Omega^k$, see \Cref{obs:omegas}. The following example gives a spherical nonabelian group that attains the numerical upper bound for $\cc$ from Inequality~(2) and shows that the lower bound from Inequality~(1) in \Cref{thm:obereSchranke} with $\mc{O}$-graphs is strict in general. 

\begin{example}[A nonabelian finite group with maximal $\cc$] \label{ex:nonabelianupper}
Define $\Gamma$ as follows: if $n=2k$ is even, $\Gamma$ is the disjoint union of $k=n/2$ dihedral diagrams $\tI(2m_1)$, $\dots$, $\tI(2m_{k})$ with $m_i \geq 2$; in case $n=2k+1$ is odd, $\Gamma$ is the disjoint union of $\tI(2m_1)$, $\dots$, $\tI(2m_{k})$ with one $\tA_1$, again with $m_i \geq 2$. Thus $W_\Gamma$ is nonabelian by construction, and since every $i$-element subset of $V(\Gamma)$ gives a parabolic subgroup whose irreducible components are listed in \Cref{fig:Tabelle}, its $i$-odd graph $\Gamma^i$ has $\binom{n}{i}$ vertices. As $\Gamma$ is even, no pair of its rank-$i$ parabolics is odd-adjacent. Whence every $\Gamma^i$ is totally disconnected and $\mathrm{cc}_2\left(W_\Gamma\right) =\sum_{i=1}^{n}\binom{n}{i}=2^n-1$. 

Note that, taking the indices $m_i$ to be all distinct, none of the factors $\tI(2m_i)$ are pairwise isomorphic. Hence $\cc(W_\Gamma) > \sum_{i=1}^n |\pi_0(\Omega^i)|$ is a strict inequality in this case.
\end{example}

We finish this section by deducing the additive properties for the number of conjugacy classes of elements of finite order in free and direct products. 

\begin{proof}[Proof of \Cref{thm:FreeDirectProducts}]
Let $m\geq 2$ and $G, H$ be groups such that $\ccm(G)<\infty$ and $\ccm(H)<\infty$.

Let $w\in G\ast H$ be an element of order $m$. Using the action of $G\ast H$ on the associated Bass--Serre tree $T_{G\ast T}$ one shows that $w$ is contained in a conjugate of $G$ or of $H$, see \cite[I{\S}6, Proposition~21]{SerreTrees}. Moreover, $w$ cannot be simultaneously conjugated to an element of $G$ and an element of $H$, since otherwise $w$ would fix an edge in $T_{G \ast H}$ which is not possible. Hence 
\[
\ccm(G\ast H)=\ccm(G) + \ccm(H).\]

Now, let $w\in G\times H$ be an element of order $m$. Then there exist $g\in G$, $h\in H$ such that $w=(g,h)$. Assume first that $g=1$ (resp. $h=1$). An easy calculation shows in these cases that $w$ is conjugated to an element in $H$ (resp. in $G$) of order $m$. Note that, since $G$ commutes with $H$, an element of $G$ cannot be conjugated to an element of $H$.

If $g\neq 1$ and $h\neq 1$, then $\ord(g)=a, \ord(h)=b$ and the least common multiple of $a$ and $b$, denoted by $\lcm(a,b)$, is equal to $m$. Let $(x, y)\in G\times H$. Then 
\[
(x, y)(g,h)(x, y)^{-1}=(xgx^{-1}, yhy^{-1}). 
\]
Hence, $(g,h)$ can only be conjugated to an element in $G\setminus\left\{1\right\}\times H\setminus\left\{1\right\}$. Further, two elements $(g_1, h_1), (g_2, h_2)\in G\setminus\left\{1\right\}\times H\setminus\left\{1\right\}$ with $\ord(g_1,h_1)=\ord(g_2, h_2)=m$  are conjugated if and only if $\ord(g_1)=\ord(g_2)=a$, $\ord(h_1)=\ord(h_2)=b$ and $g_1$ is conjugated to $g_2$ and $h_1$ is conjugated to $h_2$ and $\lcm(a,b)=m$. This shows  
\[
\ccm(G\times H)=\underset{\lcm(k,l)=m}{\sum_{(k,l)\in\N\times\N}} \cck(G)\cdot \ccl(H),
\]
and the expression for $m=p$ a prime is immediate.
\end{proof}

\section{Applications to some subclasses of Coxeter groups}
\label{sec:specialCases}

To provide extended examples we compute in \Cref{sec:triangle} the number of conjugacy classes of involutions in any given triangle group  and discuss in \Cref{sec:RACS}  the subclass of right-angled Coxeter groups. 
In \Cref{sec:finite-affine-Coxeter} we determine $\cc$ in some of the spherical and affine cases.

\subsection{Triangle Coxeter groups} 
\label{sec:triangle}

By \Cref{thm:obereSchranke}, the family of triangle groups witnesses sharpness of upper bounds for $\mathrm{cc}_2$. Here we explicitly record the number of conjugacy classes of involutions for all infinite triangle groups. Let $\Delta$ be the complete graph  with vertex set $\left\{s_1, s_2, s_3\right\}$ and an edge-labeling $p,q,r\in\mathbb{N}_{\geq 2}\cup\left\{\infty\right\}$, see \Cref{fig:Deltapqr}. By definition, the assosiated \emph{triangle group} $\Delta(p,q,r)$ is defined via the presentation 
\[\Delta(p,q,r):=\langle s_1, s_2, s_3\mid s_1^2, s_2^2, s_3^2, (s_1s_2)^p, (s_1s_3)^q, (s_2s_3)^r\rangle.\] 
We remark that $|\Delta(p,q,r)|=\infty$ if and only if at least one edge-label is $\infty$ or $\frac{1}{p}+\frac{1}{q}+\frac{1}{r}\leq 1$; see \cite[Chapter 6]{Davis2008}. 

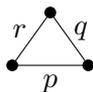
\begin{figure}[htb]
	\begin{center}
	\captionsetup{justification=centering}
		\begin{tikzpicture}
			\draw[fill=black]  (0,0) circle (2pt);
            \draw[fill=black]  (1,0) circle (2pt);
            \draw[fill=black]  (0.5,0.7) circle (2pt);
            \draw (0,0)--(1,0);
            \draw (1,0)--(0.5, 0.7);
            \draw (0,0)--(0.5, 0.7);
            \node at (0.5, -0.25) {$p$};
            \node at (0.1, 0.45) {$r$};
            \node at (0.9, 0.45) {$q$};
        \end{tikzpicture}
        \caption{Defining diagram $\Delta$ for a triangle group. (Note that this might differ from a Coxeter diagram.)}
	      \label{fig:Deltapqr}
    \end{center}
\end{figure}

Using \Cref{thm:CardConjClass}, we quickly compute $\mathrm{cc}_2(\Delta(p,q,r))$, the number of conjugacy classes of involutions, for all infinite triangle groups $\Delta(p,q,r)$. Since the number of conjugacy clases of reflections is given by $\vert\pi_0(\Gamma^1)\vert$ (cf. \cite[Lemma 3.6]{BMMN}), our findings give quick means to check the discrepancy between the (conjugacy classes of) reflections and involutions of $\Delta(p,q,r)$.  
For $p, q, r\in\mathbb{N}$ we have
\begin{align*}
\mathrm{cc}_2(\Delta(p,q,r)) & =
\begin{cases}
6, & \text{if }p, q, r\text { are even},\\
1, & \text{if }p, q, r\text { are odd},\\
4, &  \text{if } \text{ precisely one edge label is odd},\\
2, & \text{if } \text{ precisely one edge label is even};
\end{cases} \\
&\\
\mathrm{cc}_2(\Delta(\infty,q, r)) &=
\begin{cases}
5, & \text{if }q\text{ and }r\text{ are even},\\
1, &  \text{if }q\text{ and }r\text{ are odd},\\
3, & \text{if } (q+r) \text{ is odd};
\end{cases} \\
&\\
\mathrm{cc}_2(\Delta(\infty,\infty, r)) &=
\begin{cases}
4, & \text{if }r\text{ is even},\\
2, & \text{if }r\text{ is odd}.
\end{cases}
\end{align*}
Finally, $\mathrm{cc}_2(\Delta(\infty,\infty,\infty)) =3$ by \Cref{ex:universalCoxetergroups}, which completes the list. It is curious how all possible values from $1$ to $6$ are realised as the $\cc$ of rank-$3$ groups. We have no structural explanation for this phenomenon at the moment. 

\begin{remark}[When do $\mc{O}$-graphs suffice to compute $\cc$?] \label{obs:omegas}
    Using $\mc{O}$-graphs (cf. \cref{def:omegas}), if $\Delta(p,q,r)$ is infinite, then 
$\mathrm{cc}_2(\Delta(p,q,r))=|\pi_0(\Omega^1)|+\vert\pi_0(\Omega^2)\vert$ if and only if the even edge labels in the defining triangle are pairwise distinct.

Hence we raise the following question: Is it possible to characterize those Coxeter groups $W_\Gamma$ where the inequality $\cc(W_\Gamma) \geq \sum_{k=1}^{|V(\Gamma)|} |\pi_0(\Omega^k)|$ is indeed an equality?
\end{remark}

\subsection{Right-angled Coxeter groups}
\label{sec:RACS}

RACGs are typically constructed using a differnt \emph{presentation diagram} (or \emph{defining graph}), which we now introduce. Let $\Lambda$ be a finite simplicial graph with vertex set $V(\Lambda)$ and edge set $E(\Lambda)$. The right-angled Coxeter group $W(\Lambda)$ is defined as follows:
\[
W(\Lambda) = \langle V(\Lambda)\mid v^2\text{ for all }v\in V(\Lambda), (vw)^2\text{ for all } \left\{v,w\right\}\in E(\Lambda)\rangle.
\]

Note that, for many RACGs, the number $\cc$ of conjugacy classes of involutions can be computed by first looking at decompositions, applying \Cref{thm:FreeDirectProducts} to reduce complexity, then invoking our main \Cref{thm:CardConjClass}. Here we offer alternative means to determine $\cc$ for such groups. 
The proof has a geometric flavour and for it we do not need the results from \Cref{sec:OddGraphs,sec:mainproofs}.

\begin{proposition}
Let $W(\Lambda)$ be a right-angled Coxeter group. The number $\mathrm{cc}_2(W(\Lambda))$ of conjugacy classes of involutions in $W(\Lambda)$ is equal to the number of nontrivial cliques (i.e., complete subgraphs of rank at least one) in the defining graph $\Lambda$.
\end{proposition}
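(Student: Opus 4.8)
The plan is to construct an explicit bijection between the conjugacy classes of involutions of $W(\Lambda)$ and the nontrivial cliques of $\Lambda$. First I would record the structure of the spherical standard parabolics. A subset $C \subseteq V(\Lambda)$ generates a finite standard parabolic $W_C$ exactly when $\Lambda[C]$ is complete, i.e.\ when $C$ is a clique; in that case $W_C \cong (\Z/2\Z)^{|C|}$ is elementary abelian and its longest element is $w_C := \prod_{v \in C} v$, the product of the generators in $C$. Being abelian, $W_C$ has $w_C$ central automatically, and $w_C$ is a genuine involution whenever $C \neq \leer$. By \Cref{lem:LongestEltsRepresentInvolutions}, every involution of $W(\Lambda)$ is conjugate to the (central) longest element of some spherical standard parabolic, hence to $w_C$ for some nontrivial clique $C$. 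This already exhibits a surjection $C \mapsto [w_C]$ from nontrivial cliques onto conjugacy classes of involutions.

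The crux is injectivity: distinct cliques must yield non-conjugate involutions. Here I would pass to the abelianization $\phi \colon W(\Lambda) \twoheadrightarrow W(\Lambda)^{\mathrm{ab}}$. Since the only defining relations are $v^2$ and the commutators arising from edges, one gets $W(\Lambda)^{\mathrm{ab}} \cong (\Z/2\Z)^{V(\Lambda)}$, with the images $\phi(v) = e_v$ of the generators forming a basis. As $\phi$ is a homomorphism, conjugate elements have equal image, and $\phi(w_C) = \sum_{v \in C} e_v$ is precisely the indicator vector of $C$. Distinct subsets of $V(\Lambda)$ have distinct indicator vectors, so $w_C$ conjugate to $w_D$ forces $\phi(w_C) = \phi(w_D)$ and therefore $C = D$. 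Thus the map $C \mapsto [w_C]$ is a bijection, and $\cc(W(\Lambda))$ equals the number of nontrivial cliques of $\Lambda$.

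I expect the main obstacle to be exactly this injectivity step, i.e.\ certifying that no two distinct cliques fuse into one conjugacy class. The abelianization argument dispatches it cleanly because right-angled Coxeter diagrams carry no odd (nor unlabeled $m=3$) edges, so there is no mechanism pushing distinct standard generators, or products thereof, into a common conjugacy class. For the surjectivity one may instead argue geometrically, matching the intended flavour: $W(\Lambda)$ acts properly and cocompactly on its Davis complex, a CAT(0) cube complex whose cube stabilizers are the conjugates of the spherical parabolics $W_C$, so by the CAT(0) fixed-point theorem every involution fixes a point and hence lies in such a stabilizer, i.e.\ is conjugate into some $W_C$. I would nonetheless favour routing both steps through elementary tools — using only \Cref{lem:LongestEltsRepresentInvolutions} for existence and the abelianization for injectivity — as this bypasses the machinery of \Cref{sec:OddGraphs,sec:ConjParabolics} entirely.
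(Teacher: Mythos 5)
Your proof is correct, and it takes a genuinely different route from the paper's in both halves. The paper's existence step is geometric: it applies the Bruhat--Tits fixed-point theorem to the action of $W(\Lambda)$ on the Davis--Moussong complex to place an involution $w$ inside a conjugate $gW(\Delta)g^{-1}$ of a clique parabolic, and then intersects all such conjugates to identify the parabolic closure of $w$ with $xW(\Delta')x^{-1}$ for a single nontrivial clique $\Delta'$, concluding that $w$ is conjugate to the product of the vertices of $\Delta'$; you instead invoke \cref{lem:LongestEltsRepresentInvolutions} directly, after observing that the spherical standard parabolics of a RACG are exactly the clique subgroups $W_C \cong (\Z/2\Z)^{|C|}$ with central longest element $w_C = \prod_{v \in C} v$. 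For injectivity the paper goes through \cref{lem:ConjLongest} (conjugate involutions have conjugate parabolic closures) and then cites a result of Antol\'in--Minasyan to conclude that conjugate standard parabolics of a RACG have equal vertex sets; your abelianization argument replaces both steps at once: conjugation dies in $W(\Lambda)^{\mathrm{ab}} \cong (\Z/2\Z)^{V(\Lambda)}$, where $w_C$ maps to the indicator vector of $C$, so distinct cliques cannot fuse into one class. Your route is more elementary and essentially self-contained --- no Davis complex, no external parabolic-conjugacy theorem --- and it pinpoints why the right-angled case is special: with no odd edges, the abelianization is a complete invariant for these classes, whereas for general Coxeter groups it only separates odd components of the diagram and this shortcut fails. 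What the paper's proof buys is precisely what its authors advertise: a geometric method (fixed points plus parabolic closures) that illustrates an alternative perspective, at the cost of heavier citations; note that both proofs equally bypass the odd-graph machinery of \Cref{sec:OddGraphs,sec:ConjParabolics}, since the lemma you use and the lemma the paper uses both live in the preliminaries.
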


\begin{proof}
Let $W(\Lambda)$ be a right-angled Coxeter group and let $X$ be the associated Davis--Moussong complex (cf. \cite[Chapters~7 and~12]{Davis2008}), on which $W(\Lambda)$ acts geometrically. We denote this action by $\Phi\colon W(\Lambda) \to \Isom(X)$. 

Let $w\in W(\Lambda)$ be an involution. By the Bruhat--Tits fixed point theorem \cite[Theorem~11.23]{AbramenkoBrown} we know that $\Fix(\Phi(w))$ is nonempty. Since the action $\Phi$ is type-preserving, there exists a vertex in $F:=\Fix(\Phi(w))$. Let then $y\in F$ be a vertex. By definition of $X$ we have $y=gW(\Delta)$, where $\Delta$ is a nontrivial clique in $\Lambda$. Further, $w\in gW(\Delta) g^{-1}$.

We define $A_w:=\bigcap_{g_i W(\Delta_i)\in F} g_i W(\Delta_i)g_i^{-1}$. Since $w \in g_i W(\Delta_i)g_i^{-1}$ for every $g_i W(\Delta_i)\in F$, the intersection $A_w$ is nontrivial. Moreover, $A_w$ is a parabolic subgroup since it is an intersection of parabolics; cf. \cite{Qi2007}. Thus $A_w = x W(\Delta') x^{-1}$ for some $x \in W$ and some nonempty clique $\Delta'$ in $\Lambda$. Hence $w \in A_w$ is conjugate to $v_1v_2\ldots v_r$ where $V(\Delta')=\left\{v_1,\ldots, v_r\right\}$.

Let now $\Delta_1$ and $\Delta_2$ be two nontrivial cliques. Let $w_1=x_1\ldots x_n$ and $w_2=y_1\ldots y_m$ be involutions where $V(\Delta_1)=\left\{x_1,\ldots, x_n\right\}$ and $V(\Delta_2)=\left\{y_1,\ldots, y_m\right\}$. 
If $w_1$ is conjugate to $w_2$, then the parabolic closure of $w_1$ is conjugate to the parabolic closure of $w_2$ by \cref{lem:ConjLongest}. . 
As the parabolic closure of $w_i$ is $W(\Delta_i)$, it follows from \cite[Corollary~3.8]{AntolinMinasyan} that $\Delta_1=\Delta_2$.
\end{proof}


\subsection{Spherical and affine Coxeter groups}
\label{sec:finite-affine-Coxeter}

The irreducible finite and affine Coxeter groups have been classified in a short list of infinite families $\tA_n$, $\tB_n$, $\tD_n$, $\tI(m)$, $\atA_n$, $\atB_n$, $\atC_n$, $\atD_n$, shown in \Cref{fig:InfiniteFamiliesFiniteCoxeter,fig:InfiniteFamiliesAffineCoxeter}, and some exceptional types. 
In this section we provide formulae for $\cc$ for some of these families. We list the numbers in ranks $\leq 11$ in all affine types in \Cref{sec:app:tables-affine}. 
Note that types $\til{\mathtt{I}}_1$ and $\atG$ have already been dealt with in \cref{ex:universalCoxetergroups} and \Cref{sec:triangle}, respectively, since $W_{\til{\mathtt{I}}_1} \cong \Z/2\Z \ast \Z/2\Z$ and $W_{\atG} \cong \Delta(2,3,6)$.

The next theorem summarizes our findings for the irreducible infinite cases.

\begin{theorem}[Some affine Coxeter groups] 
\label{thm:affineCoxeter}
    The number of conjugacy classes of involutions of Coxeter groups of type $\atA_n$ is given by the formula
    \[
     \mathrm{cc}_2 (W_{\atA_n})= 
        \begin{cases}
            k,  &\text{ if } n=2k,\\
            k+2, &\text{ if } n=2k+1;
        \end{cases}
    \] 
    and in type $\atC_n$ by the formula
    \[
        \cc (W_{\atC_n}) = 
        \begin{cases}
            4k^2+2k, &\text{ if } n=2k,\\
            4k^2+6k+2, &\text{ if } n=2k+1.\\
       \end{cases}
    \] 
\end{theorem}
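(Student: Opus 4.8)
The overall plan is to apply \Cref{thm:CardConjClass} and compute $\cc(W_\Gamma)=\sum_{k\ge 1}\vert\pi_0(\Gamma^k)\vert$ by determining the connected components of each higher rank odd graph directly from the defining diagram. The two families draw on different inputs from \Cref{sec:OddGraphs}: the diagram of type $\atA_n$ is a cycle on $n+1$ vertices all of whose edges are unlabelled (hence odd) and which contains no $\tH_3$-subdiagram, so it falls squarely under \Cref{lemma:VerticesInTypeA}; the diagram of type $\atC_n$ is the path $s_0,s_1,\dots,s_n$ with $m_{0,1}=m_{n-1,n}=4$ and all remaining edges unlabelled, so that the two even end-edges obstruct a direct application of the line lemmas and must be handled separately.

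For $\atA_n$ I would first read off from \Cref{lemma:VerticesInTypeA} that $V(\Gamma^k)$ consists of the cyclically $2$-separated $k$-subsets and that $\Gamma^k=\leer$ precisely when $k>\lceil n/2\rceil$. The same lemma identifies the isolated vertices as those parabolics whose generators are spaced exactly two apart all the way around the cycle, a configuration that exists if and only if $2k=n+1$. Hence, when $n=2k$ no $\Gamma^j$ has an isolated vertex, whereas when $n=2k+1$ the top graph $\Gamma^{k+1}$ consists of exactly the two alternating parabolics, each isolated. For every remaining nonempty $\Gamma^j$ I would invoke the Klapperschlangen argument of \Cref{lem:foldinglines} along arcs of the cycle to conclude that $\Gamma^j$ is connected. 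Summing $\vert\pi_0(\Gamma^j)\vert$ over the nonempty range then gives $k$ when $n=2k$ and $k+2$ when $n=2k+1$, as claimed.

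For $\atC_n$ the first step is a normal form for the admissible parabolics. Inspecting \Cref{fig:Tabelle}, the only connected admissible diagrams that embed into $\atC_n$ are $\tA_1$ and the blocks $\tB_m$ that are anchored at one of the two $4$-edges; a genuine interior segment of length $\ge 2$ has type $\tA_m$ and is inadmissible. Consequently every vertex of $\Gamma^k$ decomposes as a (possibly empty) left $\tB$-block, a (possibly empty) right $\tB$-block, and a set of interior rank-one generators. Reading the odd-adjacencies off \Cref{lemma:edges_Gamma_n}, the two $\tB$-blocks and the two end reflections $s_0,s_n$ (whose only incident edge is even) are rigid: by \Cref{thm:CliquesConjugateProd} they must sit inside the common direct factor $W_X$ and cannot be moved, whereas the interior reflections constitute the elementary abelian factor that slides along the odd interior edges.

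With this description the count of $\vert\pi_0(\Gamma^k)\vert$ becomes combinatorial. Using \Cref{lem:foldinglines} on the interior odd path shows that, for fixed rigid blocks, all placements of a given number of sliding reflections lie in a single component, so a component of $\Gamma^k$ is determined by its rigid blocks together with the number of sliding reflections. Enumerating these data subject to the evident fitting constraints on $n+1$ vertices, summing over $k$, and simplifying the resulting binomial sums produces the two closed forms, the parity of $n$ entering through the admissible ranges of block sizes and sliding counts. I expect the principal obstacle to lie precisely in this rigidity analysis: correctly pinning down which $\tB$-block parabolics are mutually conjugate — in particular settling, via \Cref{thm:CliquesConjugateProd}, the conjugacy relations between blocks anchored at opposite ends of the diagram — since once the conjugacy bookkeeping is fixed the remaining summation is routine.
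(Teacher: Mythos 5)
Your treatment of type $\atA_n$ is correct and is, in substance, the paper's own proof: the paper disposes of this case by citing \cref{prop:odd-circle}, whose proof is exactly your combination of \cref{lemma:VerticesInTypeA} (vertex description, emptiness for $k>\lceil n/2\rceil$, and the two isolated alternating parabolics when $n=2k+1$) with the folding argument of \cref{lem:foldinglines}/\cref{lemma:VerticesInTypeA_path} run with cyclic indices.

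For type $\atC_n$ the paper offers no proof to compare against (it is omitted with a pointer to the first author's thesis), so your sketch stands on its own --- and there the gap is genuine, though it sits in the opposite place from where you flagged it. Your structural analysis is right, and it is exactly what \cref{IrreducibleAdjacent}, \cref{ProductsZ2} and \cref{thm:CliquesConjugateProd} give: the $\tB$-blocks anchored at the two $4$-edges and the end generators $s_0,s_n$ are rigid (conjugate standard parabolics must share their non-abelian factor $W_X$ literally, and $s_0,s_n$ lie on no odd edge), while the interior $\tA_1$'s slide along the odd interior path; in particular the "principal obstacle" you name resolves as \emph{non}-conjugacy of blocks anchored at opposite ends. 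Consequently a conjugacy class is determined by the triple $(a,b,p)$ recording the left block size, the right block size, and the number of sliding reflections, subject to the fitting constraint $a+b+2p\le n$. But then the count is
\[
\cc(W_{\atC_n})=\#\left\{(a,b,p)\in\Nzero^3\setminus\{(0,0,0)\} \ :\ a+b+2p\le n\right\}=\sum_{p=0}^{\lfloor n/2\rfloor}\binom{n-2p+2}{2}\ -\ 1,
\]
which grows cubically in $n$, not quadratically. So your final step --- "simplifying the resulting binomial sums produces the two closed forms" --- is precisely the step that fails; honestly executed, your program refutes the stated $\atC_n$ formula rather than proving it.

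Concretely, for $n=4$ the enumeration gives $21$, while the stated formula gives $4k^2+2k=20$. The discrepancy is already visible in $\Gamma^4$: all five maximal standard parabolics $\{s_0,\dots,s_4\}\setminus\{s_i\}$ are admissible (types $\tB_4$, $\tB_3\sqcup\tA_1$, $\tB_2\sqcup\tB_2$, $\tA_1\sqcup\tB_3$, $\tB_4$) and pairwise non-conjugate by the paper's own results --- two distinct irreducible $\tB_4$'s cannot be conjugate by \cref{IrreducibleAdjacent}, and the two reducible ones with a $\tB_3$ factor have different factors $W_X$, contradicting \cref{ProductsZ2} --- so $\vert\pi_0(\Gamma^4)\vert=5$, not $4$ as in \cref{table:atC}. (One can confirm this independently in the realization $W_{\atC_4}\cong\Z^4\rtimes W_{\tC_4}$: the five longest elements are the point reflections $x\mapsto 2c-x$ whose centers $c$ have $0,1,2,3,4$ half-integer coordinates, and that count is a conjugacy invariant.) The paper's own data already betray the problem: for $\atC_3$ its table totals $3+5+3=11$ while its formula gives $12$; the correct value is $12$, since $\Gamma^3$ there consists of four pairwise non-conjugate isolated vertices. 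So only the $\atA_n$ half of the statement survives your (correct) approach; for the $\atC_n$ half, the claimed closed form is not something your enumeration can produce.
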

\begin{proof}
Our formula for type $\atA_n$ follows immediately from the more general \cref{prop:odd-circle}, to be proved below. 
The formula for type $\atC_n$ is obtained using entirely analogous arguments, drawing from techniques developed in \cref{sec:OddGraphs}, whence we omit the proof. 
\end{proof}

\begin{figure}[htb]
	\begin{center}
	\captionsetup{justification=centering}
		\begin{tikzpicture}[scale=1, transform shape]
            \draw[fill=black] (0,-1.5) circle (2pt);
            \draw[fill=black] (1,-1.5) circle (2pt);
            \draw (0,-1.5)--(1,-1.5);
            \draw[fill=black] (2,-1.5) circle (2pt);
            \draw[fill=black] (3,-1.5) circle (2pt);
            \draw[dashed] (1,-1.5)--(2, -1.5);
            \draw (2,-1.5)--(3,-1.5);
            \draw[fill=black] (1.5, -0.8) circle (2pt);
            \draw (0,-1.5)--(1.5, -0.8);
            \draw (3,-1.5)--(1.5, -0.8);
            \node at (0, -1.8) {$s_0$};
            \node at (1, -1.8) {$s_1$};
            \node at (2, -1.8) {$s_{n-2}$};
            \node at (3, -1.8) {$s_{n-1}$};
            \node at (1.5, -0.5) {$s_n$};
            \node at (-0.9, -1.5) {$\underset{n\geq2}{\atA_{n}}$};	

            \draw[fill=black] (7,-1.5) circle (2pt);
            \draw[fill=black] (8,-1.5) circle (2pt);
            \node at (7.5,-1.3) {$4$};
            \draw (7,-1.5)--(8,-1.5);
            \draw (8,-1.5)--(9,-1.5);
            \draw[dashed] (9,-1.5)--(10,-1.5);
            \draw[fill=black] (10,-1.5) circle (2pt);
            \draw[fill=black] (9,-1.5) circle (2pt);
            \draw[fill=black] (11, -0.8) circle (2pt);
            \draw[fill=black] (11, -2.2) circle (2pt);
            \draw (10, -1.5)--(11,-0.8);
            \draw (10,-1.5)--(11, -2.2);
            \node at (6.1, -1.5) {$\underset{n\geq3}{\atB_{n}}$};	

            \draw[fill=black] (0,-3.7) circle (2pt);
            \draw[fill=black] (1,-3.7) circle (2pt);
            \draw (0,-3.7)--(1, -3.7);
            \node at (0.5, -3.5) {$4$};
            \draw[fill=black] (2, -3.7) circle (2pt);
            \draw (1, -3.7)--(2,-3.7);
            \draw[dashed] (2,-3.7)--(3,-3.7);
            \draw[fill=black] (3,-3.7) circle (2pt);
            \draw[fill=black] (4,-3.7) circle (2pt);
            \draw (3,-3.7)--(4,-3.7);
            \node at (3.5, -3.5) {$4$};
            \node at (-0.9, -3.7) {$\underset{n\geq2}{\atC_{n}}$};

            \draw[fill=black] (7,-3) circle (2pt);
            \draw[fill=black] (7,-4.4) circle (2pt);
            \draw[fill=black] (8,-3.7) circle (2pt);
            \draw (7,-3)--(8,-3.7);
            \draw (7,-4.4)--(8,-3.7);
            \draw[fill=black] (9,-3.7) circle (2pt);
            \draw (8,-3.7)--(9,-3.7);
            \draw[dashed] (9,-3.7)--(10,-3.7);
            \draw[fill=black] (10,-3.7) circle (2pt);
            \draw[fill=black] (11, -3) circle (2pt);
            \draw[fill=black] (11, -4.4) circle (2pt);
            \draw (10,-3.7)--(11, -3);
            \draw (10,-3.7)--(11, -4.4);           
            \node at (6.1, -3.7) {$\underset{n\geq4}{\atD_{n}}$};
\end{tikzpicture}
	\caption{Infinite families of affine irreducible Coxeter diagrams. 
}
     \label{fig:InfiniteFamiliesAffineCoxeter}
	\end{center}
\end{figure}
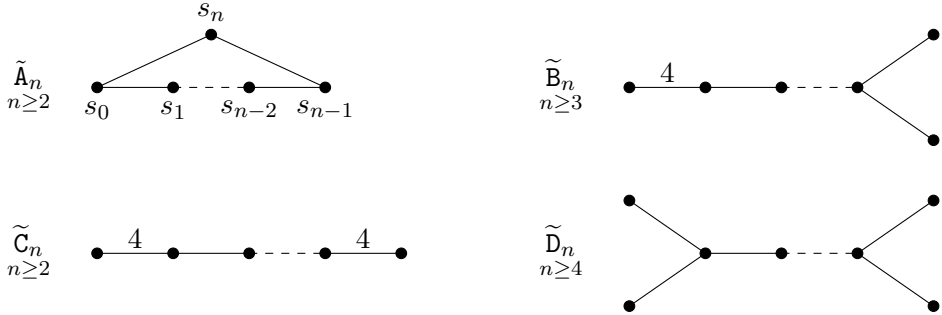

\begin{remark} \label{obs:complicated}
    The combinatorial complexity of counting connected components in $\Gamma^k$ for a diagram $\Gamma$ of type $\atC_n$ is increased by the existence of subdiagrams of types 
    \[
    \tB_{k-i} \sqcup \left( \underbrace{\tA_1 \sqcup \cdots \sqcup \tA_1}_{i \text{ copies}} \right)
    \]
    in $\Gamma$, with $i \in \lbrace 0, \dots, k \rbrace$. Nevertheless, a combination of the tools presented in \cref{sec:OddGraphs} still suffices to obtain the formula for $\cc (W_{\atC_n})$ presented above.
    The reader interested in detailed proofs is referred to the first author's forthcoming thesis,  \cite{ThesisAnna}.
\end{remark}

\Cref{prop:odd-circle} provides a formula for the number of conjugacy classes of involutions for all Coxeter groups whose Coxeter diagram is an odd-labeled circle, provided it contains no subdiagram of type $\tH$. 
Note that this class includes groups of type $\atA_n$. 
The relevant diagrams are shown in \Cref{fig:Circle} in \Cref{sec:OddGraphs}.

\begin{proposition}[Computing $\cc$ in odd circles]
\label{prop:odd-circle}
    Let  $n \geq 2$ and $\Gamma$ be a Coxeter diagram as shown in \Cref{fig:Circle}, with 
    $m_{i,i+1}$ unlabeled or odd for all $i=0,\ldots, n-1$, and assuming $\Gamma$ contains no subgraph isomorphic to $\tH_3$. Then 
    \[
    \mathrm{cc}_2 (W_{\Gamma})= 
        \begin{cases}
            \ell,  &\text{ if } n=2\ell,\\
            \ell+2, &\text{ if } n=2\ell+1.
        \end{cases}
    \]
\end{proposition}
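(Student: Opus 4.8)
The plan is to read off $\cc(W_\Gamma)$ directly from the formula in \Cref{thm:CardConjClass}, namely $\cc(W_\Gamma) = \sum_{k=1}^{n+1} |\pi_0(\Gamma^k)|$, by determining the number of connected components of each higher rank odd graph $\Gamma^k$. Since $\Gamma$ is an odd circle on $n+1$ vertices containing no $\tH_3$ subdiagram (cf. \Cref{fig:Circle}), \Cref{lemma:VerticesInTypeA} already supplies everything we need about the vertex sets: $\Gamma^1 \cong \Gamma$ is a cycle and hence connected; for $2 \le k \le \lceil n/2 \rceil$ the vertices of $\Gamma^k$ are exactly the size-$k$ independent sets of the circle (and $\Gamma^k = \leer$ once $k > \lceil n/2 \rceil$); and such a vertex is isolated precisely when all of its gaps equal $2$. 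Thus the entire problem reduces to deciding connectivity of each $\Gamma^k$.

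First I would dispatch the \emph{middle} layers all at once. I claim $\Gamma^k$ is connected whenever $2 \le k$ and $2k < n+1$. Indeed, given distinct $W_J, W_L \in V(\Gamma^k)$, their index sets satisfy $|J \cup L| \le 2k < n+1 = |V(\Gamma)|$, so there exists a vertex $v \in V(\Gamma) \setminus (J \cup L)$. Deleting $v$ from the circle produces a full subgraph $\Lambda = \Gamma \setminus \{v\}$ that is a \emph{path graph} whose edges are all odd or unlabelled and which contains no $\tH_3$ (these properties being inherited from $\Gamma$). As $\Gamma_J, \Gamma_L \subseteq \Lambda$, the hypotheses of \Cref{lem:foldinglines} are met, and the lemma yields an edge path from $W_J$ to $W_L$ inside $\Gamma^k$. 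Since $W_J, W_L$ were arbitrary, $\Gamma^k$ is connected and $|\pi_0(\Gamma^k)| = 1$.

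Next I would split on the parity of $n$ and treat the top layer. If $n = 2\ell$, then for every $k \in \{2, \dots, \ell\} = \{2, \dots, \lceil n/2 \rceil\}$ one has $2k \le 2\ell < n+1$, so the middle-layer argument covers all of them; together with $|\pi_0(\Gamma^1)| = 1$ this gives $\cc(W_\Gamma) = \ell$. If instead $n = 2\ell+1$, the same argument covers $k = 2, \dots, \ell$, but the top layer $k = \ell+1 = \lceil n/2 \rceil$ falls outside its range, because there $2k = 2\ell+2 = n+1$. For this borderline value I would argue by hand: a size-$(\ell+1)$ independent set in the even cycle on $n+1 = 2\ell+2$ vertices forces every gap to equal $2$, so there are exactly the two ``alternating'' sets (the even- and the odd-indexed vertices), each of which is isolated by the criterion of \Cref{lemma:VerticesInTypeA}. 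Hence $|\pi_0(\Gamma^{\ell+1})| = 2$, and summing gives $\cc(W_\Gamma) = 1 + (\ell-1) + 2 = \ell + 2$.

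The one genuinely delicate point is precisely this borderline top layer $2k = n+1$, which occurs exactly when $n$ is odd and is responsible for the ``$+2$'' in the formula: here the clean pigeonhole producing a commonly avoided vertex $v$ collapses, since between them the two maximal independent sets already exhaust all vertices of the circle, so \Cref{lem:foldinglines} no longer applies and the two isolated maximal sets must be identified directly. Along the way I would also take care to record that $\Lambda = \Gamma \setminus \{v\}$ is a \emph{full} subgraph, so that odd-adjacency computed inside $\Lambda$ is genuine odd-adjacency inside $\Gamma$ and the edge path furnished by \Cref{lem:foldinglines} really lives in $\Gamma^k$; and to observe that the small degenerate cases (for instance the triangle, where $\lceil n/2 \rceil = 1$ and only $\Gamma^1$ survives) are already consistent with the stated count.
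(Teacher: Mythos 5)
Your proof is correct, and its skeleton matches the paper's: both reduce via \Cref{thm:CardConjClass} to counting components of each $\Gamma^k$, both use \Cref{lemma:VerticesInTypeA} to identify the vertices (and the isolated ones) of $\Gamma^k$ with the size-$k$ independent sets of the circle, and both account for the ``$+2$'' via the two alternating sets that occur exactly when $n$ is odd and $k=\ell+1$. Where you genuinely diverge is in the connectivity of the intermediate layers. The paper handles this step by asserting that ``the same argument as in the proof of \Cref{lemma:VerticesInTypeA_path} works,'' i.e.\ it re-runs the Klapperschlangen folding argument with indices taken modulo $n+1$, leaving the adaptation as a sketch. You instead observe that whenever $2k<n+1$, any two distinct vertices $W_J,W_L$ of $\Gamma^k$ jointly avoid some vertex $v$ of the circle, and deleting $v$ leaves a full path subgraph $\Lambda=\Gamma\setminus\{v\}$ (odd-labelled, $\tH_3$-free) containing both $\Gamma_J$ and $\Gamma_L$, so that \Cref{lem:foldinglines} applies verbatim. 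This pigeonhole reduction is cleaner: it invokes the folding lemma exactly as stated rather than an adapted cyclic variant, and it makes transparent why connectivity fails precisely at the borderline $2k=n+1$ --- there the two alternating sets exhaust all vertices, no avoided $v$ exists, and these are exactly the two isolated vertices producing the ``$+2$''. What the paper's route buys is brevity; what yours buys is rigor, since nothing needs to be re-verified with indices modulo $n+1$.
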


\begin{proof}

    \cref{lemma:VerticesInTypeA} implies that, for $k\leq \ell$ the graph $\Gamma^k$ has no isolated vertices. 
    To determine the connected components of $\Gamma^k$ in this case, the same argument as in the proof of \cref{lemma:VerticesInTypeA_path} works, showing that $\Gamma^k$ is always connected for $k \leq \ell$.
    (Note that here the distance between indices of generators of a given parabolic in $\Gamma^k$ must also be at least $2$, but now indices are taken $\mod{n+1}$.)

Since $\Gamma$ is a circle, taking $l>\ell$ gives $\Gamma^l= \leer$ for $n=2\ell$; in turn, for $n=2\ell+1$, the graph $\Gamma^{\ell+1}$ contains precisely two isolated vertices, namely $\langle s_0, s_2, s_4, \dots, s_{2\ell}\rangle$ and $\langle s_1, s_3, s_5, \dots, s_{2\ell+1}\rangle$. Finally, $\Gamma^l$ for $l > \ell+1$ is also empty. This completes the proof.
\end{proof}

\begin{remark}[Affine types $\atB$ and $\atD$] 
In Coxeter diagrams of types $\atB$ and $\atD$, various combinations of diagrams from \Cref{fig:Tabelle} can occur as subgraphs, leading to a more complicated structure of the connected components of each $\Gamma^k$. Particularly in type $\atD$, the difficulty in counting conjugacy classes of involutions is also observed in recent work of Mili\'cevi\'c, Schwer, and Thomas \cite{MST5}. 

While we were unable to identify a closed formula for the total number of conjugacy classes of involutions $\cc$ in these cases, there seem to be some interesting patterns for the numbers of connected components of their odd graphs $\Gamma^k$. That is, as $n=|V(\Gamma)|$ grows, the sequence $(|\pi_0(\Gamma^k)|)_{k \in \N}$ might have interesting combinatorial properties in both types $\atB_n$ and $\atD_n$. 

For small ranks the number of conjugacy classes of involutions in these groups is shown in \Cref{table:atB,table:atD} in \Cref{sec:app:tables-affine}. Again, these were obtained by direct inspection or making use of some tools from \Cref{sec:OddGraphs}. 
\end{remark}

\begin{remark}[Remaining affine types $\atE_n$, $\atF$, $\atG$, and $\atI$]
One checks directly with \Cref{thm:CardConjClass} that $\cc(W_{\atE_6})=5$, $\cc(W_{\atE_7})=19$, $\cc(W_{\atE_8})=14$ and $\cc(W_{\atF}) = 12$. Types $\atG$ and $\atI$ were covered in \Cref{sec:triangle} and by \cref{thm:FreeDirectProducts}, respectively. In addition we refer the reader to the tables in \Cref{sec:app:tables-affine} for numbers in affine types of ranks $\leq 11$. We note in passing that $\cc(W_{\atE_7})$ has been previously computed explicitly by Richardson \cite[Example~3.4]{Richardson_1982}---we remark that Richardson found $20$ conjugacy classes as he also considers the trivial element to be an involution.
\end{remark}

The formulae for spherical types $\tA_n$ and $\tB_n=\tC_n$ are computed using similar methods, whence proofs are omitted. We record them in the next theorem. 

\begin{theorem}[Some spherical Coxeter groups] 
\label{thm:finiteCoxeter}
    The number of conjugacy classes of involutions is given in type $\tA_n$ by the formula
    \[
     \mathrm{cc}_2 (W_{\tA_n})= 
        \begin{cases}
            k,  &\text{ if } n=2k,\\
            k+1, &\text{ if } n=2k+1;
        \end{cases}
    \] 
    and in type $\tC_n$ by the formula
    \[
        \cc (W_{\tC_n}) = \begin{cases}
            k^2+2k, &\text{ if } n=2k,\\
            k^2+3k+1, &\text{ if } n=2k+1.\\
        \end{cases}
    \] 
\end{theorem}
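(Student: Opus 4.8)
Since every edge of the Coxeter diagram $\tA_n$ is unlabelled (hence odd) and $\tA_n$ contains no $\tH_3$-subdiagram, the diagram is an odd line on $n$ vertices in the sense of \Cref{fig:path}. The plan is therefore to read off the higher rank odd graphs directly from \Cref{lemma:VerticesInTypeA_path}: that proposition tells us that $\Gamma^k$ is nonempty precisely for $1 \le k \le \lceil n/2\rceil$, and that each such $\Gamma^k$ has exactly one connected component (it is either a single vertex, in the case $n=2\ell+1$, $k=\ell+1$, or connected with at least two vertices). Feeding $|\pi_0(\Gamma^k)| = 1$ for $1\le k\le \lceil n/2\rceil$ and $0$ otherwise into \Cref{thm:CardConjClass} gives $\cc(W_{\tA_n}) = \lceil n/2\rceil$, which is $k$ when $n=2k$ and $k+1$ when $n=2k+1$. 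This disposes of the first formula essentially for free.

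\textbf{Type $\tC_n=\tB_n$: classifying vertices.} Write the diagram as the line $s_1 - s_2 - \cdots - s_{n-1} \overset{4}{-} s_n$, so that the \emph{only} edge which is neither odd nor unlabelled is the terminal edge $\{s_{n-1},s_n\}$ carrying the label $4$. The first step is to describe $V(\Gamma^k)$. Inspecting \Cref{fig:Tabelle} and noting that $\tA_j$ with $j\ge 2$ is absent there, I would argue that the induced subdiagram on a vertex $T$ of $\Gamma^k$ can only have components of type $\tA_1$ together with \emph{at most one} $\tB_m$-component ($m\ge 2$); the latter, if present, must contain the unique label-$4$ edge and hence is the contiguous block $\{s_{n-m+1},\dots,s_n\}$. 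This yields a clean dichotomy: \emph{Type I} vertices, which are independent $k$-sets of the path $s_1-\cdots-s_n$, and \emph{Type II} vertices, consisting of a fixed block $\tB_m$ together with an independent $(k-m)$-set $A\subseteq\{s_1,\dots,s_{n-m-1}\}$, the vertex $s_{n-m}$ acting as an obligatory buffer.

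\textbf{Counting components and summing.} The crux is that odd-adjacency preserves the isomorphism type of a parabolic (odd-adjacency implies adjacency, cf.\ the remark after \Cref{def:odd-adjacent}), so each connected component of $\Gamma^k$ lives inside a single type. For Type II with fixed $m$ the block is pinned to the right end (the only place the label-$4$ edge occurs), so sliding the abelian part $A$ inside the odd subline $s_1-\cdots-s_{n-m-1}$ while keeping the $\tB_m$-factor fixed shows, via \Cref{lem:foldinglines}, that these vertices form a single component, present exactly for $m\le k\le m+\lceil (n-m-1)/2\rceil$. For Type I the key observation is that $s_n$ has \emph{no} incident odd edge, so membership of $s_n$ is invariant along edge-paths in $\Gamma^k$; the Type I vertices thus split into those with $s_n\notin T$ (one component, via \Cref{lem:foldinglines} on $s_1-\cdots-s_{n-1}$) and those with $s_n\in T$ (one component, via \Cref{lem:foldinglines} on $s_1-\cdots-s_{n-2}$). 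Summing these contributions over $k$ through \Cref{thm:CardConjClass} gives
\[
\cc(W_{\tB_n}) = \Big\lceil\tfrac{n-1}{2}\Big\rceil + \Big(\Big\lceil\tfrac{n-2}{2}\Big\rceil + 1\Big) + \sum_{m=2}^{n}\Big(\Big\lceil\tfrac{n-m-1}{2}\Big\rceil + 1\Big),
\]
and a routine case split on the parity of $n$ collapses this to $k^2+2k$ for $n=2k$ and $k^2+3k+1$ for $n=2k+1$.

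\textbf{Main obstacle.} The genuinely delicate point is the connectivity bookkeeping in Type II: one must justify that fixing the $\tB_m$-factor and sliding only the abelian part still connects all relevant vertices, i.e.\ a \emph{relative} version of \Cref{lem:foldinglines} for parabolics of shape $(\Z/2\Z)^\ell\times W_X$ with $W_X$ held fixed, which is exactly the subtlety flagged in \Cref{obs:complicated}. Verifying that no further components are hidden — equivalently, that the $s_n$-invariant is the \emph{only} obstruction to connectivity in Type I, and that nothing beyond the block position obstructs it in Type II — is where the argument needs the most care; the remaining parity arithmetic is mechanical.
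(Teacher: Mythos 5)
Your proof is correct and follows exactly the route the paper indicates for this theorem: type $\tA_n$ as an immediate corollary of \cref{lemma:VerticesInTypeA_path} combined with \cref{thm:CardConjClass}, and type $\tC_n=\tB_n$ by classifying the vertices of $\Gamma^k$ into independent sets and $\tB_m$-block-plus-independent-set configurations and counting components with \cref{lem:foldinglines} --- the paper itself omits these details (deferring them to the first author's thesis, cf. \cref{obs:complicated}), so your write-up supplies what the paper only sketches. Your bookkeeping does collapse to $k^2+2k$ resp. $k^2+3k+1$, and the ``relative folding'' subtlety you flag is resolved by noting that an elementary abelian factor $W_L$ in \cref{def:odd-adjacent} can contain no vertex of the $\tB_m$-block (its neighbours in the block would then be forced into $L$, contradicting commutativity), so the block lies in the common factor $W_X$ and the edges among Type II vertices with a fixed block correspond precisely to the edges of the $(k-m)$-odd graph of the subline $s_1-\cdots-s_{n-m-1}$, to which \cref{lem:foldinglines} applies verbatim.
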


Note that the formula for type $\tA$ is a straightforward corollary to \Cref{lemma:VerticesInTypeA_path} (after \Cref{thm:CardConjClass}). Type $\tC$ is dealt with analogously, taking a bit more care with the combinatorics similarly to the discussion in \Cref{obs:complicated}. 
A detailed proof of \Cref{thm:finiteCoxeter} will be avaliable in the first author's thesis \cite{ThesisAnna}.

\begin{figure}[h]
	\begin{center}
	\captionsetup{justification=centering}
		\begin{tikzpicture}
			\draw[fill=black]  (0,0) circle (2pt);
			\draw[fill=black]  (1,0) circle (2pt);
            \draw[fill=black]  (2,0) circle (2pt);
			\draw (0,0)--(1,0);
            \draw[dashed] (1,0)--(2,0);
            \draw[fill=black] (3,0) circle (2pt);
            \draw (2,0)--(3,0);
			\node at (-0.9,0) {$\underset{n\geq 1}{\tA_{n}}$};

            \draw[fill=black]  (6,0) circle (2pt);
			\draw[fill=black]  (7,0) circle (2pt);
            \draw[fill=black]  (8,0) circle (2pt);
			\draw (6,0)--(7,0);
            \draw[dashed] (7,0)--(8,0);
            \draw[fill=black] (9,0) circle (2pt);
            \draw (8,0)--(9,0);
            \node at (8.5, 0.2) {$4$};
            \node at (5.1,0) {$\underset{n\geq 2}{\tB_{n}=\tC_n}$};

            \draw[fill=black]  (0,-1.5) circle (2pt);
			\draw[fill=black]  (1,-1.5) circle (2pt);
            \draw[fill=black]  (2,-1.5) circle (2pt);
			\draw (0,-1.5)--(1,-1.5);
            \draw[dashed] (1,-1.5)--(2,-1.5);
            \draw[fill=black] (3,-0.8) circle (2pt);
            \draw[fill=black] (3,-2.2) circle (2pt);
            \draw (2,-1.5)--(3, -0.8);
            \draw (2,-1.5)--(3, -2.2);
            \node at (-0.9,-1.5) {$\underset{n\geq 4}{\tD_{n}}$};

            \draw[fill=black] (7,-1.5) circle (2pt);
            \draw[fill=black] (8,-1.5) circle (2pt);
            \draw (7,-1.5)--(8,-1.5);
            \node at (7.5, -1.3){$m$};
            \node at (6.1,-1.5) {$\underset{m\geq 4}{\tI(m)}$};
		\end{tikzpicture}
	\caption{Infinite families of finite irreducible Coxeter diagrams.}
    \label{fig:InfiniteFamiliesFiniteCoxeter}
	\end{center}
\end{figure}
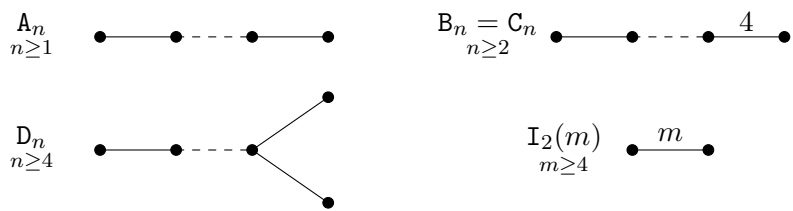


\section{Tables listing classes in affine Coxeter groups}
\label{sec:app:tables-affine}

The tables in this section list the number of conjugacy classes of involutions for all affine irreducible Coxeter groups of rank $n+1 \leq 11$, with the exception of type $\atA$ which displays linear values for $\cc$, as seen in \Cref{thm:affineCoxeter}.  
Recall that $\Gamma^{n+1}$ is empty for such systems as $W_\Gamma$ is infinite. 
In each table, the summands in the $j$-th line correspond, in order, to the number of connected components of the graphs $\Gamma^k$ for $k\leq j$.

\medskip

\begin{table}[htb]
\small
    \begin{tabular}{p{1cm} p{8cm}}
     Type & $\cc(W_{\atB_n}) = \left|\pi_0 (\Gamma^1)\right|+\ldots + \left| \pi_0 (\Gamma^{n})\right|$\\
     \midrule
     \midrule
     $\atB_3$&  $2+4+3=9$  \\
     \midrule
     $\atB_4$&  $2+5+5+2=14$  \\
     \midrule
     $\atB_5$&  $2+4+6+5+2=19$  \\
     \midrule
     $\atB_6$&   $2+4+6+6+5+2=25$ \\
     \midrule
     $\atB_7$&  $2+4+6+7+6+5+2=32$   \\
     \midrule
     $\atB_8$&   $2+4+6+9+8+6+5+2=42$ \\
     \midrule
     $\atB_9$& $2+4+6+8+10+9+6+5+2=52$   \\
     \midrule
     $\atB_{10}$&   $2+4+6+8+11+11+9+6+5+2=64$  \\
     \bottomrule
\end{tabular}
\caption{Conjugacy classes of involutions in type $\atB$.}
\label{table:atB}
\end{table}

\medskip

\begin{table}[htb]
\small
    \begin{tabular}{p{1cm} p{8cm}}
     Type & $\cc(W_{\atC_n}) = \left|\pi_0 (\Gamma^1)\right|+\ldots + \left| \pi_0 (\Gamma^{n})\right|$\\
     \midrule
     \midrule
     $\atC_2$ & $3+3=6$ \\
     \midrule
     $\atC_3$& $3+5+3=11$  \\
     \midrule
     $\atC_4$&  $3+6+7+4=20$  \\
     \midrule
     $\atC_5$&  $3+6+9+8+4=30$  \\
     \midrule
     $\atC_6$&  $3+6+10+11+8+4=42$  \\
     \midrule
     $\atC_7$&  $3+6+10+13+12+8+4=56$   \\
     \midrule
     $\atC_8$&  $3+6+10+14+15+12+8+4=72$  \\
     \midrule
     $\atC_9$&  $3+6+10+14+17+16+12+8+4=90$   \\
     \midrule
     $\atC_{10}$&  $3+6+10+14+18+19+16+12+8+4=110$  \\
     \bottomrule
\end{tabular}
\caption{Conjugacy classes of involutions in type $\atC$.}
\label{table:atC}
\end{table}

\medskip

\begin{table}[htb]
\small
    \begin{tabular}{p{1cm} p{8cm}}
    Type & $\cc(W_{\atD_n}) = \left|\pi_0 (\Gamma^1)\right|+\ldots + \left| \pi_0 (\Gamma^{n})\right|$\\
     \midrule
     \midrule
     $\atD_4$& $1+6+4+5=16$   \\
     \midrule
     $\atD_5$& $1+3+2+3+0=9$   \\
     \midrule
     $\atD_6$&  $1+3+6+7+5+4=26$  \\
     \midrule
     $\atD_7$&  $1+3+3+5+3+4+0=19$   \\
     \midrule
     $\atD_8$& $1+3+3+9+7+9+6+7=45$  \\
     \midrule
     $\atD_9$&  $1+3+3+6+5+7+4+5+0=34$ \\
     \midrule
     $\atD_{10}$&  $1+3+3+6+9+11+9+9+7+4=62$ \\
     \bottomrule
\end{tabular}
\caption{Conjugacy classes of involutions in type $\atD$.} 
\label{table:atD}
\end{table}

\medskip

\begin{table}[h]
\small
\begin{tabular}{p{1cm} p{8cm}}
      Type & $\cc(W_{\atX_n})= \left|\pi_0 (\Gamma^1)\right|+\ldots + \left| \pi_0 (\Gamma^{n})\right|$\\
     \midrule
     \midrule
     $\atE_6$ & $1+1+1+2+0+0=5$ \\
     \midrule
     $\atE_7$ & $1+1+3+4+3+3+4=19$ \\
     \midrule
     $\atE_8$& $1+1+1+3+2+2+1+3=14$ \\
     \midrule
     $\atF$&  $2+3+4+3=12$  \\
     \midrule
     $\atG$&  $2+2=4$  \\
     \midrule
     $\til{\mathtt{I}}_1$&  $2$ \\
     \bottomrule
\end{tabular}
\caption{Number of classes in the exceptional affine types.}
\label{table:exceptionalTypes}
\end{table}

\vspace{-8ex}

{\ }\newline
\vspace{10ex}
{\ }\newline
\vspace{10ex}
\printbibliography

\end{document}